\providecommand{\U}[1]{\protect\rule{.1in}{.1in}}
\newtheorem{theorem}{Theorem}[section]
\newtheorem{corollary}[theorem]{Corollary}
\newtheorem{definition}[theorem]{Definition}
\newtheorem{lemma}[theorem]{Lemma}
\newtheorem{proposition}[theorem]{Proposition}
\newtheorem{remark}[theorem]{Remark}
\numberwithin{equation}{section}
\newcommand{\R}{\mathbf{R}}
\newcommand{\N}{\mathbf{N}}
\begin{document}

\author[Yong Liu]{Yong Liu}
\address{Department of Mathematics, University of Science and Technology of China, Hefei, China.}
\email{yliumath@ustc.edu.cn}

\author[Frank Pacard]{Frank Pacard}
\address{Centre de Mathématiques Laurent Schwartz, École Polytechnique, 91128 Palaiseau, France.}
\email{frank.pacard@polytechnique.edu}

\author[Juncheng Wei]{Juncheng Wei}
\address{Department of Mathematics, University of British Columbia, Vancouver, B.C. Canada, V6T 1Z2.}
\email{jcwei@math.ubc.ca}

\title[Bouncing Jacobi fields and the Allen-Cahn equation on surfaces]{Bouncing Jacobi Fields and the Allen-Cahn equation on surfaces}

\date{\today}

\maketitle

\begin{abstract}
    The Allen-Cahn functional is a well studied variational problem which appears in the modeling of phase transition phenomenon. This functional depends on a parameter $\varepsilon >0$ and is intimately related to the area functional as the parameter $\varepsilon$ tends to $0$. In the case where the ambient manifold is a compact surface, we give sufficient assumptions which guarantee the existence of countable families of critical points  of the Allen-Cahn functional whose nodal sets converge with multiplicity $2$ to a given embedded geodesic, while their energies and Morse indices stays uniformly bounded, as the parameter $\varepsilon$ tends to $0$. This result is specific to two dimensional surfaces and, for generic metric, it does not occur in higher dimension.
\end{abstract}

\section{Introduction}

Given a compact $m$-dimensional Riemannian manifold $(M,g)$ without boundary and a positive parameter $\varepsilon >0$, the Allen-Cahn equation arises as the Euler-Lagrange equation of the energy functional
\begin{equation}
\label{eq:1.1}
\mathcal E_{AC} (u) := \frac{\varepsilon}{2} \int_M |\nabla u|_g^2 \, \text{dvol}_g + \frac{1}{4\varepsilon}\, \int_M (1-u^2)^2\, \text{dvol}_g ,
\end{equation}
which models a phase transition phenomenon. A classical result of L. Modica \cite{Modica} asserts that, under natural assumptions, the Allen-Cahn functional $\Gamma$-converges to the volume functional.
This seminal result has triggered many studies to unravel the deep connections between the Allen-Cahn equation
\begin{equation}
\varepsilon^{2}\Delta_{g}u + u-u^{3} =0,
\label{eq:1.2}
\end{equation}
which is the Euler-Lagrange equation associated to the functional  (\ref{eq:1.1}) and minimal surfaces which are critical points of the volume functional.

\medskip

The zero set of solutions of (\ref{eq:1.2}) correspond to the \emph{transition layers} in the phase transition phenomenon and the $\Gamma$-convergence theory tells us that (under some natural assumption on their energy and Morse indices) the zero sets of sequences of solutions of the Allen-Cahn equation converge to minimal surfaces (eventually with multiplicity).

\medskip

In the past decade, we have seen important advances in the min-max theory of minimal surfaces in the varifold setting by C. Marques and A. Neves and their collaborators \cite{LMN, Marques, Mar0, Mar1}.
Given the strong links between the theory of minimal surfaces and the Allen-Cahn equation, these results have in turn triggered some work concerning the existence and behavior of sequences of critical points of the Allen-Cahn functional.

\medskip

The min-max theory for the Allen-Cahn equation which has been developed in \cite{Dey,GG,Gu} ensures the existence of min-max critical points of the Allen-Cahn functional, and, eventually, this provides a powerful tool to prove the existence of smooth minimal surfaces provided one is able to study the convergence of transition layers of the critical points as the parameter $\varepsilon$ tends to $0$.
In this context, a delicate question arises since the convergence of the transition layers to a minimal submanifold can be quite complicated. For example, transition layers could possibly converge to a minimal surface with multiplicity higher than $1$ or it might happen that the transition layers converge to some immersed minimal surface or even the union of minimal surfaces which intersect.

\medskip

In the case where the dimension of the ambient manifold is equal to $3$, O. Chodosh and C. Mantoulidis \cite{Man1} have proven that, for generic metrics on the manifold,  uniform bounds on the Morse index and the energy of solutions of (\ref{eq:1.2}) prevent the convergence of the transition layers with multiplicity to a minimal surface.

\medskip

Recall that the Jacobi operator about a given minimal surface $N$ is defined by
\[
\mathfrak J_N := - \left( \Delta_N + |A_N|^2+ \text{Ric}(\nu, \nu)\right) ,
\]
where $|A_N|^2$ is the square of the norm of the second fundamental form about $N$ and $\nu$ a unit normal vector field along $N$. The following result is proven in \cite{Man1}:
\begin{theorem}
\label{th:1.1}
\cite{Man1}
Let $N$ denote a minimal surface which is embedded in a $3$-dimensional manifold $M$ and is constructed as the limit, as $\varepsilon$ tends to $0$, of zero sets of solutions to the Allen–Cahn equation with uniformly bounded indices and energies.
If the convergence of the transition layers towards $N$ occurs with multiplicity or is two-sided, then $N$ is stable and carries a positive Jacobi field (on its two-sided double cover, in the second case).
\end{theorem}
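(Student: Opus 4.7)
The overall strategy is to extract a positive Jacobi field on $N$ from the renormalised distance between consecutive sheets of $\{u_\varepsilon = 0\}$ in the multiplicity case, and then to conclude stability from the mere existence of such a positive Jacobi field. The two-sided unit-multiplicity situation (in which $u_\varepsilon$ does not separate the two sides of a tubular neighbourhood of $N$, as happens when $N$ is one-sided) is reduced to the multiplicity-$2$ case by lifting $u_\varepsilon$ to the orientable double cover, so I focus on the case of multiplicity $m\geq 2$.

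\medskip

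\emph{Graph decomposition.} The first step is to invoke the curvature estimates for bounded-index Allen--Cahn solutions (in the spirit of Wang--Wei) to show that, away from a finite singular set $\mathcal S\subset N$, the nodal set $\{u_\varepsilon = 0\}$ is the disjoint union of $m$ smooth sheets which, in Fermi coordinates $(x,z)\in N\times \R$, can be written as graphs $z = h_1^\varepsilon(x) < \cdots < h_m^\varepsilon(x)$ with $h_i^\varepsilon \to 0$ locally uniformly on $N\setminus \mathcal S$. The standard multi-transition ansatz together with a Liapunov--Schmidt reduction then yields
\[
u_\varepsilon(x,z) = \sum_{i=1}^{m} (-1)^{i+1}\, H\!\left(\frac{z - h_i^\varepsilon(x)}{\varepsilon}\right) + \varphi_\varepsilon(x,z),
\]
where $H(t) = \tanh(t/\sqrt{2})$ is the heteroclinic profile and $\varphi_\varepsilon$ is small in a norm adapted to the collapsing sheets.

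\medskip

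\emph{Effective system and extraction of the Jacobi field.} Projecting \eqref{eq:1.2} onto the approximate kernel directions $\partial_z H\bigl((z - h_i^\varepsilon)/\varepsilon\bigr)$ yields a Toda-type reduced system, schematically
\[
\varepsilon^{2}\, \mathfrak J_N h_i^\varepsilon \;=\; c_0\, e^{-\sqrt{2}\,(h_i^\varepsilon - h_{i-1}^\varepsilon)/\varepsilon} \;-\; c_0\, e^{-\sqrt{2}\,(h_{i+1}^\varepsilon - h_i^\varepsilon)/\varepsilon} + \text{l.o.t.},
\]
with $c_0>0$ the universal interaction constant and the natural boundary convention $e^{-\infty}=0$. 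Since the right-hand side is exponentially small in $1/\varepsilon$, each $h_i^\varepsilon$ is an approximate Jacobi function on $N\setminus\mathcal S$, and the gap $w^\varepsilon := h_{i+1}^\varepsilon - h_i^\varepsilon > 0$ between two consecutive sheets satisfies $\mathfrak J_N w^\varepsilon = o(1)$. After normalising by $\mu_\varepsilon := \max_{N} w^\varepsilon$, the family $w^\varepsilon/\mu_\varepsilon$ is uniformly bounded and strictly positive; Schauder estimates and Arzel\`a--Ascoli then produce a non-trivial limit $\phi$ on $N\setminus\mathcal S$ with $\phi\geq 0$, $\max\phi=1$, and $\mathfrak J_N\phi = 0$. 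The Hopf strong maximum principle upgrades this to $\phi>0$ on $N\setminus\mathcal S$, and a standard removable-singularity argument (using that $\phi$ is bounded and $\mathcal S$ finite) extends $\phi$ to a positive Jacobi field on all of $N$.

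\medskip

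\emph{Stability and main obstacle.} The existence of a strictly positive solution of $\mathfrak J_N \phi = 0$ implies, by the principal eigenvalue characterisation of Schr\"odinger operators, that $\lambda_1(\mathfrak J_N) = 0$; in particular $N$ is stable. The main technical hurdle is to establish the reduced system with errors small enough to make the limiting step meaningful: the interaction terms are exponentially small in $1/\varepsilon$, so the remainder $\varphi_\varepsilon$ and the $\text{l.o.t.}$ in the projection must be controlled at a comparable level. This requires sharp weighted estimates on the linearised Allen--Cahn operator about the multi-sheet ansatz, together with a careful treatment of the singular set $\mathcal S$ where the graph decomposition degenerates, and this is the technical core of \cite{Man1}.
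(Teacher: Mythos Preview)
The paper does not prove Theorem~\ref{th:1.1}: it is quoted verbatim from \cite{Man1} and serves only as motivation for the rest of the article. There is therefore no ``paper's own proof'' to compare your proposal against.

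That said, your sketch is a fair high-level outline of the Chodosh--Mantoulidis argument: curvature estimates give a sheet decomposition away from finitely many points, projection onto the heteroclinic kernel produces a Jacobi--Toda type system for the heights, and the normalised gap between consecutive sheets converges to a positive Jacobi field, forcing stability. Two remarks are in order. First, the phrasing of the theorem as recorded in the present paper is slightly garbled (the ``second case'' is really the one-sided case, handled by passing to the two-sided double cover, as you correctly do). Second, the step ``normalise by $\mu_\varepsilon = \max_N w^\varepsilon$ and pass to the limit'' hides the genuine work: one must first prove a Harnack-type sheet-separation estimate showing that $w^\varepsilon/\mu_\varepsilon$ stays bounded away from zero on compact subsets of $N\setminus\mathcal S$, since without this the limit could vanish identically. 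In \cite{Man1} this is obtained from the reduced system together with the sign structure of the Toda interaction, not merely from Schauder estimates, and it is the actual crux of the proof.
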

Recall that, for bumpy metrics \cite{Whi91,Whi17}, the Jacobi operator does not carry any Jacobi field.
This implies that, for generic metrics on the ambient $3$-manifold, if an embedded minimal surface $N$ is obtained  as in Theorem~\ref{th:1.1}, then it is two-sided and the convergence of the nodal sets of the solutions of the Allen-Cahn equation towards $N$, does not occur with multiplicity larger than one.

\medskip

One of the motivation of the present paper is to understand the set of solutions of the Allen-Cahn equation which have energy and indices uniformly bounded as the parameter $\varepsilon$ tends to $0$.
The \emph{Morse index} of a solution $u$ of (\ref{eq:1.2}) is defined to be the dimension of the space of (smooth) functions over which the quadratic form
\[
{\mathcal Q}_{u}(w) := \varepsilon \, \int_M |\nabla w|_g^2 \, \text{dvol}_g +\frac{1}{\varepsilon} \, \int_M   \, (3u^2-1) \, w^2 \, \text{dvol}_g,
\]
is negative definite.

\medskip

In view of all these developments, it is natural to ask whether Theorem~\ref{th:1.1} also holds when the ambient manifold is a $2$-dimensional surface. In other words, is it possible to construct sequences of solutions of the Allen-Cahn equation which are defined on a compact surface without boundary, whose energies and Morse indices are uniformly bounded and whose transition layers converge with multiplicity larger than one, to a given embedded geodesics as the parameter $\varepsilon$ tends to $0$?

\medskip

This question will be answered affirmatively in the present paper.

\medskip

To better understand our result, let us recall a couple of constructions which are relevant to the present paper. Given an embedded minimal hypersurface $N$ in $M$ a $m$-dimensional compact manifold without boundary, solutions whose zero set is a (small) normal graph over $N$ have been constructed using a Lyapunov-Schmidt reduction argument in \cite{Pacard, Pac-Role}, provided the Jacobi operator about $N$ does not carry any Jacobi field.
This approach has been further developed in \cite{Manuel0} to construct counter-examples to the De Giorgi conjecture for the Allen-Cahn equation in $\R^{9}$ and entire solutions of the Allen-Cahn equation whose zero set is close to a given embedded finite total curvature minimal surface in $\R^{3}$ \cite{M0}.
In all these papers, the transition layer (i.e. the zero set) of the solutions is a (small) normal graphs over a minimal hypersurface $N$.
In particular, they are diffeomorphic to $N$ and hence this corresponds to the case where the zero set of the solutions to the Allen-Cahn equation converges to a minimal hypersurface with multiplicity one.
The existence of solutions to the Allen-Cahn equation whose transition layers consist of several disjoint (small) graphs over a given minimal surface has been proven in \cite{Manuel2}, provided the potential $|A_N|^2 +\text{Ric} (\nu, \nu)$ which appears in the Jacobi operator, is positive along $N$.
In this case, the position of the transition layers of the solutions of the Allen-Cahn equation is governed by solutions of the the so-called \emph{Jacobi-Toda system} which will also play an important role in the present paper.
This last result provides examples of solutions to the Allen-Cahn equation whose transition layers converge to a given embedded minimal surface with multiplicity larger than one, as $\varepsilon$ tends to $0$.
However, these solutions do not satisfy all assumptions of the Theorem of O. Chodosh and C. Mantoulidis, since their Morse indices are of the order $\left\vert\ln\varepsilon\right\vert$, and hence tend to infinity as $\varepsilon$ tends to $0$.

\medskip

In the present paper, we concentrate our attention to the case where the ambient manifold $(M, g)$ is a compact surface without boundary and $\gamma$ is a closed geodesic embedded in $M$.
In this case, the Jacobi operator about $\gamma$ reduces to
\[
\mathfrak J_\gamma    := -( \partial_s^2 + K_g),
\]
where $s$ is the arc-length along $\gamma$ and where $K_g$ denotes the Gauss curvature of $(M,g)$ along $\gamma$. Let us recall the well known:
\begin{definition}
A closed geodesic $\gamma$ is said to be non-degenerate if the Jacobi operator $\mathfrak J_\gamma$ does not have any nonzero Jacobi field, i.e. it has trivial kernel.
\end{definition}
We also define $\text{Ind}\, (\gamma)$, the \emph{Morse index} of a closed geodesic $\gamma$, to be equal to the maximal dimension of the space of (smooth) functions defined on $\gamma$, over which the quadratic form
\[
Q_\gamma(v) := \int_\gamma \left( |\partial_s v|^2 - K_g \, v^2 \right) \, ds,
\]
is negative definite.

\medskip

To state our result, we need to introduce a new notion which we have christened \emph{Bouncing Jacobi Field}. It will be convenient to assume that the geodesic $\gamma$ we are given, is parameterized by arc-length and, to allay notations, we will identify $\gamma$ with $\R/\hspace{-1mm}\sim$ where $s\sim s'$ if and only if $s'-s\in |\gamma|\, {\bf Z}$, where $|\gamma|$ is the length of $\gamma$. Therefore, $s$ will denote both a point in $\gamma$ and a point in $\R/\hspace{-1mm}\sim$.

\medskip

Finally, if $s_1, \ldots, s_n$ are point of $\gamma$, we will assume that they (or their representatives) are arranged so that $0 \leq s_1 <  \ldots  < s_n < |\gamma|$ and we will agree that $s_0 := s_n -|\gamma|$ and $s_{n+1} = s_1 + |\gamma|$.

\begin{definition}[Bouncing Jacobi Field]
\label{de:1.3}
We will say that a real valued function $\Phi : \gamma \to \R$ is a \emph{Bouncing Jacobi Field} with $n\geq 1$ minimums if there exist $n$ distinct points $s_1,\ldots,s_n\in \gamma$, such that $\Phi$ is continuous on $\gamma$, smooth on $\gamma-\left\{  s_1,\ldots, s_n\right\}$ and if it satisfies
\[
\left\{
\begin{array}{rlllllll}
\mathfrak J_\gamma \Phi = 0 & \text{and} & \Phi  >   1 \quad \text{on}\quad \gamma -\{s_1, \ldots, s_n\},\\[3mm]
\Phi (s_j) = 1 & \text{and} &
\partial_s \Phi (s_j^+) = - \partial_s \Phi  (s_j^-) >  0 \quad \text{for all} \quad j =1, \ldots, n.
\end{array}
\right.
\]
\end{definition}

In this definition as well as in the rest of the paper, if $f : \gamma-\{s_1, \ldots, s_n\}\to \R$ is a function, then $f(s^+_j)$ and $f(s^-_j)$ denote respectively the left and right limits of $f$ at $s_j$, assuming these limits are defined.

\medskip

We will prove that the points $s_1, \ldots, s_n$ where a given Bouncing Jacobi Field is minimal are critical points of the function
\[
\mathcal H_n (s_1, \ldots, s_n) := \sum_{j=1}^n  \left( \inf_{\phi-1\in H_{0}^{1}\left(s_i,s_{i+1} \right)} \int_{(s_i, s_{i+1})}\left(|\partial_s\phi|^2-K_g \, \phi^{2} \right) \, ds \right).
\]
The index of a Bouncing Jacobi Field $\Phi$ will be denoted by $\text{Ind}\, (\Phi)$.
It is by definition equal to the number of negative eigenvalues of the symmetric matrix associated to the Hessian of $\mathcal H_n$ at $(s_1, \ldots, s_n)$ where $\Phi$ is minimal.
The nullity of the Bouncing Jacobi Field $\Phi$ is by definition equal to the dimension of the null-space of the matrix associated to the Hessian of $\mathcal H_n$ at $(s_1, \ldots, s_n)$. A Bouncing Jacobi Field is said to be non-degenerate if its nullity is $0$.

\medskip

The existence of Bouncing Jacobi Fields is guaranteed by the following result:
\begin{theorem}
\label{th:1.4} Let $\gamma$ be an embedded  geodesic in $M$ and $n \geq 1$. Assume that the Gauss curvature $K_g$ is positive along $\gamma$ and
\[
|\gamma|^2 \,  \sup_{\gamma} K_g < \pi^2\, n^2.
\]
Then, there exists a \emph{Bouncing Jacobi Field} which has $n$ minimums and for which, the sum of the nullity and index is equal to $n$.
\end{theorem}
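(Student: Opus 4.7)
The plan is to realize a Bouncing Jacobi Field with $n$ minima as a local maximum of the functional $\mathcal H_n$ on an appropriate open configuration space.

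\emph{Configuration space and first variation.} Let $\mathcal U_n$ be the open set of cyclic equivalence classes of ordered $n$-tuples $(s_1,\ldots,s_n)$ on $\gamma$ for which, on every sub-interval $(s_i,s_{i+1})$, the first Dirichlet eigenvalue of $\mathfrak J_\gamma=-(\partial_s^2+K_g)$ is strictly positive. The hypothesis $|\gamma|^2\sup_\gamma K_g<\pi^2 n^2$ makes $\mathcal U_n$ non-empty, since the equi-distributed tuple has sub-intervals of length $|\gamma|/n<\pi/\sqrt{\sup_\gamma K_g}$. On $\mathcal U_n$, the Dirichlet problem $\mathfrak J_\gamma\phi_i=0$ on $(s_i,s_{i+1})$ with $\phi_i(s_i)=\phi_i(s_{i+1})=1$ has a unique smooth solution $\phi_i$; writing $\phi_i=1+\psi_i$, the function $\psi_i$ solves $\mathfrak J_\gamma\psi_i=K_g\geq 0$ with vanishing boundary values, so the strong maximum principle gives $\phi_i>1$ on the interior. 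A Hadamard-type differentiation of $F_i(a,b)=\int_a^b(|\partial_s\phi_i|^2-K_g\phi_i^2)\,ds$, exploiting the stationarity of $\phi_i$ and the moving boundary condition $\phi_i=1$, yields
\[
\partial_{s_j}\mathcal H_n=|\partial_s\phi_j(s_j^+)|^2-|\partial_s\phi_{j-1}(s_j^-)|^2.
\]
At a critical point the one-sided derivatives at each $s_j$ agree in magnitude, while $\phi_i>1$ on the interior forces $\partial_s\phi_j(s_j^+)>0>\partial_s\phi_{j-1}(s_j^-)$; gluing the $\phi_i$ therefore produces exactly a Bouncing Jacobi Field in the sense of Definition~\ref{de:1.3}.

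\emph{Existence of an interior maximizer.} Testing with $\phi\equiv 1$ gives $\mathcal H_n\leq -\int_\gamma K_g\,ds$, so $M:=\sup_{\mathcal U_n}\mathcal H_n<+\infty$. Take a maximizing sequence and, after passing to a subsequence, let it converge to some $\vec s^*$ in the natural compactification of the configuration space. If $\vec s^*$ lay on the eigenvalue-degeneration stratum (the first Dirichlet eigenvalue on some sub-interval passes through zero), then $\mathcal H_n\to-\infty$, contradicting finiteness of $M$. Otherwise $\vec s^*$ lies on a collision stratum: some points coincide, say $k\geq 1$ sub-intervals shrink to zero, and $\mathcal H_n(\vec s^{(k)})\to\mathcal H_{n-k}(\vec s^*_{\mathrm{red}})$ where the reduced tuple $\vec s^*_{\mathrm{red}}$ still lies in $\mathcal U_{n-k}$ since collisions only shrink sub-intervals. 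The decisive strict monotonicity is: for any $\vec s\in\mathcal U_{m-1}$ and any point $s$ in the interior of a sub-interval chosen so that the split configuration remains in $\mathcal U_m$ (for instance by bisecting the longest sub-interval, which only raises first Dirichlet eigenvalues), one has
\[
\mathcal H_m(\vec s,s)>\mathcal H_{m-1}(\vec s),
\]
because the $\mathcal H_{m-1}$-minimizer $\phi^*$ satisfies $\phi^*(s)>1$ by the first step, so imposing the additional constraint $\phi(s)=1$ strictly raises the infimum. Iterating $k$ times, one inserts back the missing points into $\vec s^*_{\mathrm{red}}$ to produce an $n$-tuple in $\mathcal U_n$ with $\mathcal H_n$-value strictly greater than $M$, a contradiction. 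Hence $\vec s^*\in\mathcal U_n$ has $n$ distinct entries, attains $M$, and by the first step yields the desired Bouncing Jacobi Field.

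\emph{Index plus nullity, and main difficulty.} At this local maximum the Hessian of $\mathcal H_n$ is negative semi-definite on the $n$-dimensional tangent space, so the number of strictly negative eigenvalues (which is by definition $\mathrm{Ind}\,\Phi$) and the dimension of the kernel (the nullity) sum to exactly $n$. The main technical obstacle I anticipate lies in the second step: carefully constructing the compactification of $\mathcal U_n$, verifying the continuous extension $\mathcal H_n\to\mathcal H_{n-k}$ across collision strata, and confirming that bisecting a sub-interval keeps the new configuration in $\mathcal U_m$, so that the monotonicity argument ruling out collision limits applies uniformly.
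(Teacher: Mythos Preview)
Your proposal is correct and follows essentially the same route as the paper: maximize $\mathcal H_n$ over ordered $n$-tuples, use the strict monotonicity $H(s,t)<H(s,\tfrac{s+t}{2})+H(\tfrac{s+t}{2},t)$ to rule out collision limits of a maximizing sequence, compute the first variation to identify critical points with Bouncing Jacobi Fields, and read off index plus nullity equal to $n$ from the fact that the critical point is a maximum on an $n$-dimensional configuration space. The ``main difficulty'' you flag is not actually one: bisecting a sub-interval can only raise the first Dirichlet eigenvalue (domain monotonicity), so the split configuration automatically stays in $\mathcal U_m$; and your first-variation formula differs from the paper's by an overall sign, which is immaterial for locating critical points.
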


We will also prove that, if $K_g$ is positive and $n\geq 1$, then
\[
|\gamma|^2 \,  \inf_{\gamma} K_g \leq \pi^2\, n^2,
\]
is a necessary condition for the existence of a Bouncing-Jacobi Field with $n$ minimum along $\gamma$.
As we will see (Corollary~\ref{co:5.6}),
\[
2n \geq \text{Ind} (\gamma).
\]
is also a necessary condition for a Bouncing Jacobi Field $\Phi$ with $n$ minimums to exist.

\medskip

Bouncing Jacobi Fields are at the heart of our construction. Starting from a Bouncing Jacobi Field with suitable non-degeneracy condition, we can construct solutions of Allen-Cahn equation with bounded energy and indices.

\medskip

Let ${\bf G}^1$ and ${\bf G}^2$ be two (possibly empty) finite collections of closed, embedded, non-degenerate geodesics on $(M,g)$. We assume that geodesics in ${\bf G}^1\cup \bf G^2$ are disjoint and we further assume that, for all $\gamma \in {\bf G}^2$, the Jacobi operator $\mathfrak J_{\gamma}$ carries a non-degenerate \emph{Bouncing Jacobi Field} $\Phi= \Phi_\gamma$ with $n=n_{\gamma} \geq 1$ minimums. Finally, assume that ${\bf G}^1$ is the boundary of a smooth compact domain(This is to guarantee that an approximate solution can be defined on the manifold) and that each geodesic in ${\bf G}^2$ is two-sided (namely, that its normal bundle is orientable).

\medskip

The main result of the paper is the following:
\begin{theorem}
\label{th:1.5}
Under the above assumptions, for all $\varepsilon$ small enough, the Allen-Cahn equation has a solution $u_{\varepsilon}$ whose Morse index is equal to
\[
\sum_{\gamma \in {\bf G}^1} \text{\rm Ind} \, (\gamma)+\sum_{\gamma \in {\bf G}^2} (\text{\rm Ind} \, (\gamma)+ n_\gamma + \text{\rm Ind} (\Phi_\gamma) ),
\]
and whose energy converges to
\[
\frac{2\sqrt 2}{3} \, \left(
\sum_{\gamma \in {\bf G}^1}  |\gamma|+2\sum_{\gamma \in {\bf G}^2}  |\gamma|\right ),
\]
as $\varepsilon$ tends to $0$. Moreover, the transition layers of $u_{\varepsilon}$ converge to geodesics in ${\bf G}^2$ with multiplicity two and converge to geodesics in ${\bf G}^1$ with multiplicity one.
\end{theorem}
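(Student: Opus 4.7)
The plan is a Lyapunov--Schmidt reduction based on carefully chosen approximate solutions adapted to each geodesic. For each $\gamma\in \mathbf{G}^1$, in Fermi coordinates $(s,t)$ along $\gamma$, the local ansatz is the standard one-dimensional profile $H(t/\varepsilon)$ with $H(t)=\tanh(t/\sqrt 2)$. For each $\gamma\in\mathbf{G}^2$ we use a two-layer ansatz consisting of two mirror-symmetric transitions placed at signed normal distances $\pm f^\gamma_\varepsilon(s)$ from $\gamma$, with
\[
f^\gamma_\varepsilon(s)\;=\;\tfrac{\varepsilon}{\sqrt 2}\,\log\tfrac{1}{\varepsilon}\;-\;\tfrac{\varepsilon}{\sqrt 2}\,\log\Phi_\gamma(s)\;+\;O(\varepsilon^{1+\alpha}),
\]
so that the interaction between the two layers, whose principal part is proportional to $e^{-2\sqrt 2\,f^\gamma_\varepsilon/\varepsilon}$, balances the $\varepsilon^2\mathfrak J_\gamma f^\gamma_\varepsilon$ term of the reduced Jacobi--Toda system. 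Gluing these local models with cutoffs, and using that $\mathbf{G}^1$ bounds a smooth domain (so that the $\pm 1$ backgrounds are consistently defined on $M$), we obtain a global approximate solution $u^\star_\varepsilon$ whose Allen--Cahn residual is small in a suitable weighted norm.

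\textbf{Reduction.} Writing $u_\varepsilon=u^\star_\varepsilon+\varphi$, we study the linearization $\mathcal L_\varepsilon:=\varepsilon^2\Delta_g+1-3(u^\star_\varepsilon)^2$. Its approximate kernel is spanned by the translation modes $\partial_t H(\cdot/\varepsilon)$ of each layer, multiplied by functions depending only on the position along the corresponding geodesic. Following the strategy developed in \cite{Pacard,Manuel0,Manuel2}, we solve the equation projected off this approximate kernel by contraction in a weighted H\"older space, reducing the problem to a finite-dimensional system for the normal displacements of the layers. For $\gamma\in\mathbf G^2$, this system is a Jacobi--Toda equation for the separation $2f^\gamma_\varepsilon$, and the logarithmic substitution $\Phi_\gamma \propto e^{-\sqrt 2\,(f^\gamma_\varepsilon - f^\gamma_{\varepsilon,0})/\varepsilon}$ transforms it, at leading order, into $\mathfrak J_\gamma\Phi_\gamma=0$ together with exactly the matching conditions of Definition~\ref{de:1.3} at the bouncing points $s_j$. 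Non-degeneracy of $\Phi_\gamma$ and of the geodesics in $\mathbf G^1\cup\mathbf G^2$ then yields invertibility of the reduced operator, producing the desired solution $u_\varepsilon$.

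\textbf{Energy, Morse index, main obstacle.} The energy asymptotics follows by substituting the ansatz into $\mathcal E_{AC}$ and using $\int_{\R}\left((H')^2+\tfrac{1}{4}(1-H^2)^2\right)dt=\tfrac{2\sqrt 2}{3}$; the factor $2$ in front of the sum over $\mathbf G^2$ accounts for the two layers. For the Morse index, we diagonalize $\mathcal Q_{u_\varepsilon}$ according to the block structure of $\mathcal L_\varepsilon$: away from the layers, $\mathcal Q_{u_\varepsilon}$ is coercive; along each $\gamma\in\mathbf G^1$ it projects after rescaling onto $Q_\gamma$, contributing $\text{Ind}(\gamma)$; for $\gamma\in\mathbf G^2$ the symmetric/antisymmetric splitting of the two layers produces, on the symmetric mode, a copy of $Q_\gamma$ contributing $\text{Ind}(\gamma)$, and on the antisymmetric mode the Hessian of $\mathcal H_{n_\gamma}$ at $(s_1,\dots,s_{n_\gamma})$ together with exactly one additional negative eigenvalue per bouncing point coming from the narrow well created by the almost-collision of the two layers, contributing $\text{Ind}(\Phi_\gamma)+n_\gamma$. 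The main obstacle lies in the analysis near the bouncing points: there the two-layer ansatz degenerates because the layers nearly touch, so a refined inner ansatz matching the outer two-layer regime to an appropriate reconnection profile must be constructed, and the corresponding linear theory in a properly weighted norm must be developed so as to capture the correct small spectrum; identifying this spectrum exactly with that of the Hessian of $\mathcal H_{n_\gamma}$ supplemented by the $n_\gamma$ bouncing-point modes is the heart of the argument.
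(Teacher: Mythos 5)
Your overall framework (Lyapunov--Schmidt reduction in Fermi coordinates, a two-layer ansatz for $\gamma\in\mathbf G^2$, a reduced Jacobi--Toda system for the layer positions, a symmetric/antisymmetric splitting for the index count) is the paper's framework, and your final index tally is numerically correct. However, the central new ingredient --- the ansatz for the layer positions along $\gamma\in\mathbf G^2$ --- is written down incorrectly, and as a consequence the mechanism you offer for the extra $n_\gamma$ negative eigenvalues is also wrong. These are not cosmetic issues: the whole point of the theorem (bounded Morse index, as opposed to \cite{Manuel2} where it blows up like $|\ln\varepsilon|$) hinges precisely on the form of this ansatz.

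Concretely, you propose
\[
f^\gamma_\varepsilon(s)\;=\;\tfrac{\varepsilon}{\sqrt 2}\,\log\tfrac{1}{\varepsilon}\;-\;\tfrac{\varepsilon}{\sqrt 2}\,\log\Phi_\gamma(s)\;+\;O(\varepsilon^{1+\alpha}),
\]
equivalently, for $\Psi:=\sqrt 2\,f^\gamma_\varepsilon/\varepsilon$, $\Psi\approx\log(1/\varepsilon)-\log\Phi_\gamma$: the Bouncing Jacobi Field enters \emph{additively through its logarithm}. The paper constructs the Jacobi--Toda solution as $\Psi_\varepsilon\approx\alpha_\varepsilon\,\Phi_\gamma$ with $\alpha_\varepsilon e^{\alpha_\varepsilon}=1/\varepsilon$: the Bouncing Jacobi Field enters \emph{multiplicatively}, rescaled by $\alpha_\varepsilon\sim-\ln\varepsilon$. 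These two profiles differ by a term of order $\log(1/\varepsilon)$, not a lower order correction. With your ansatz, $e^{-2\Psi}=\varepsilon^2\Phi_\gamma^2=O(\varepsilon^2)$ uniformly in $s$, and inserting $\Psi$ into $\varepsilon^2\mathfrak J_\gamma\Psi+\bar c^2 e^{-2\Psi}$ leaves the uncancelled leading term $-\varepsilon^2 K_g\log(1/\varepsilon)$, so the substitution does \emph{not} reduce the Jacobi--Toda equation to $\mathfrak J_\gamma\Phi_\gamma=0$; your ansatz is not an approximate solution at all. With the correct multiplicative ansatz, $e^{-2\alpha_\varepsilon\Phi_\gamma}$ is exponentially small on $\{\Phi_\gamma>1\}$ and is comparable to $\varepsilon^2\alpha_\varepsilon^2$ only on $O(1/\alpha_\varepsilon)$ neighborhoods of the bouncing points; it is this localization of the interaction that keeps the number of negative directions of the linearized Jacobi--Toda operator equal to $n_\gamma+\text{Ind}(\Phi_\gamma)$ instead of $O(|\ln\varepsilon|)$.

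The description of what happens at the bouncing points is also off. You write that ``the two-layer ansatz degenerates because the layers nearly touch'' and that the $n_\gamma$ extra negative eigenvalues come from ``the narrow well created by the almost-collision of the two layers''. In fact the two transition layers are never within $O(\varepsilon)$: even at a bouncing point the separation is $\approx\sqrt 2\,\varepsilon\alpha_\varepsilon\gg\varepsilon$, so there is no reconnection profile and no inner/outer matching at the Allen--Cahn level. What is desingularized is the corner of the rescaled Bouncing Jacobi Field $\alpha_\varepsilon\Phi_\gamma$ at each $s_j$, using a rescaled solution of the one-dimensional Toda equation $\varepsilon^2T''+\bar c^2 e^{-2T}=0$. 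The linearization of the Toda equation around this profile has exactly one negative eigenvalue (with eigenfunction $\propto\cosh^{-1}(\kappa(\cdot-\bar s)/\varepsilon)$), and each of the $n_\gamma$ Toda caps contributes one such negative direction. The remaining $\text{Ind}(\Phi_\gamma)$ antisymmetric directions come from the Hessian of $\mathcal H_{n_\gamma}$ via the discontinuous Jacobi-field modes of Proposition~\ref{pr:3.4}, and $\text{Ind}(\gamma)$ comes from the symmetric mode. So the formula at the end of your argument is right, but the ansatz and the mechanism that should lead to it are not, and without the multiplicative ansatz the construction does not close.
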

The constant $\frac{2\sqrt 2}{3}$ which appears in the statement of the theorem is nothing but the energy of the one dimensional heteroclinic solution
\[
x \mapsto \tanh \left(\frac{x}{\sqrt 2}\right).
\]

Let us explain how this result relates to former results on the construction of solutions of the Allen-Cahn equation.

\medskip

First, the above result is a combination of the results which will be proved in the present paper and the result of F. Pacard and M. Ritoré \cite{Pacard, Pac-Role} which addresses the case where ${\bf G}^2$ is empty.

\medskip

Our result is, in some sense, a generalization of earlier results  obtained in \cite{Manuel1} and \cite{Mich1}.
In these two papers, the Allen-Cahn equation is considered over a smooth bounded domain in the Euclidean plane and solutions are assumed to have $0$ Neumann condition on the boundary of the domain.
In this context, a geodesic is a line segment $\gamma$ meeting the boundary of the domain orthogonally.
The result in \cite{Mich1} corresponds to the case where the transition layer of the solutions converges to $\gamma$ with multiplicity $1$.
The result in \cite{Manuel1} corresponds exactly to solutions of the Allen-Cahn equation whose transition layers converge to $\gamma$ with multiplicity $2$ and in fact, the solutions in this paper are constructed starting from a Bouncing Jacobi Field, which in this case is continuous, piecewise  affine function of the form $s \mapsto 1 + a \, |s- \bar s|$, satisfying some boundary condition.

\medskip

Observe that, in Theorem~\ref{th:1.5}, we do not assume that the Gauss curvature is everywhere positive along each geodesic of ${\bf G}^2$.
If one assumes that the Gauss curvature is positive along a closed embedded geodesic, then the result in \cite{Manuel2} already provides solutions of the Allen-Cahn equation whose transition layers converges to $\gamma$ with multiplicity $2$ as the parameter $\varepsilon$ tends to $0$.
But as already explained, the Morse indices of these solutions tends to $+\infty$ as $\varepsilon$ tends to $0$, while the solutions provided by Theorem~\ref{th:1.5} have Morse indices which does not depend on $\varepsilon$.

\begin{remark}
\label{re:1.6}
In Theorem~\ref{th:1.5}, we ask that the normal bundle over each geodesic of ${\bf G}^2$ is two-sided.
This assumption can be relaxed if one requires that both the geodesic and the Bouncing Jacobi Fields, extended to the two-sided double-cover, are non-degenerate.
\end{remark}

\medskip

Our result requires the geodesics and the Bouncing Jacobi Fields to be non-degenerate. These two non-degeneracy conditions do hold for a generic choice of the ambient metric on $M$.
A metric $g$ on a surface $(M,g)$ is said to be a \emph{bumpy metric} if there is no immersed  closed geodesic carrying a non-trivial Jacobi field.
Recall that B. White \cite{Whi91, Whi17} has proven that bumpy metrics are generic in the sense of Baire category. The existence of non-degenerate geodesics carrying non-degenerate Bouncing Jacobi Fields is ensured by the Theorem~\ref{th:1.4} and the:
\begin{theorem}
\label{th:1.7}
For a generic choice of a $\mathcal C^k$, with $k\geq 3$, metric $g$ on $M$ (in the sense of Baire Category), embedded, closed  geodesic $\gamma$ in $(M,g)$ and all \emph{Bouncing Jacobi Field} they carry are non-degenerate.
\end{theorem}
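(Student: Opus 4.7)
The plan is to combine White's bumpy metric theorem with a Sard--Smale transversality argument applied to the finite-dimensional functional $\mathcal H_n$. By \cite{Whi91, Whi17}, for a generic $\mathcal C^k$ metric $g$ on $M$ every immersed closed geodesic in $(M,g)$ is non-degenerate, and in particular the embedded closed geodesics form a countable discrete family. Intersecting with this residual set of bumpy metrics, I may assume throughout that every embedded closed geodesic of $g$ is non-degenerate, so it remains only to show that, for generic $g$, every \emph{Bouncing Jacobi Field} carried by such a geodesic is non-degenerate.

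Fix $n\geq 1$. By the characterization established earlier in the paper, Bouncing Jacobi Fields on a geodesic $\gamma$ with exactly $n$ minimums correspond bijectively to critical points $s^0=(s_1^0,\ldots,s_n^0)$ of $\mathcal H_n^g$ in the open cell
\[
U_n:=\{(s_1,\ldots,s_n)\in \gamma^n \,:\, 0\leq s_1<\ldots<s_n<|\gamma|\},
\]
and such a field is non-degenerate exactly when $\mathrm{Hess}\,\mathcal H_n^g(s^0)$ is invertible. Since a non-degenerate geodesic $\gamma_0$ persists smoothly under perturbations as $g\mapsto\gamma(g)$, the quantity $\mathcal H_n^g(s)$ is a smooth function of $(g,s)$ in a neighborhood of $(g_0,s^0)$, and the plan is to analyze the map
\[
F^n(g,s) := \nabla_s \mathcal H_n^g(s) \in \R^n,
\]
and show that $0$ is a regular value. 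Sard--Smale applied to the Fredholm (index $0$) projection $(F^n)^{-1}(0)\to\{\mathcal C^k\text{-metrics}\}$ then yields a residual set of metrics for which every critical point of $\mathcal H_n^g$ has invertible Hessian.

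The technical core is the transversality claim: at any $(g_0,s^0)$ with $F^n(g_0,s^0)=0$, the linearization $DF^n(g_0,s^0)$ is surjective onto $\R^n$. The strategy I would use is an envelope argument for the minimization problem defining $\mathcal H_n$: at a critical point, $\partial_{s_j}\mathcal H_n^g$ depends only on boundary data at $s_j$ of the associated Bouncing Jacobi Field together with the local value of $K_g$ at $s_j$. I would then choose $n$ metric perturbations $\dot g_1,\ldots,\dot g_n$ supported in pairwise disjoint arcs $I_1,\ldots,I_n\subset \gamma$ around $s_1^0,\ldots,s_n^0$, tuned so that the local variation of the Gauss curvature inside $I_i$ produces a nonzero $i$-th component of $D_g F^n(\dot g_i)$ and vanishing $j$-th components for $j\neq i$. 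The resulting matrix $\bigl(D_{\dot g_i} F^n_j\bigr)_{ij}$ would be diagonal with non-zero entries, yielding surjectivity.

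The main obstacle is precisely this localized variation computation. A subtlety is that perturbing $g$ also moves the underlying geodesic $\gamma(g)$ and reparametrizes its arc length, so one must carefully decouple the intrinsic perturbation of $K_g$ along $\gamma(g)$ from the displacement of $\gamma_0$ itself; working in Fermi coordinates around $\gamma_0$ with conformal perturbations supported in arbitrarily small tubular neighborhoods of the $I_j$ should provide the needed freedom. Once transversality is established, Sard--Smale produces for each $n\geq 1$ a residual set $\mathcal R_n$; the countable intersection $\bigcap_{n\geq 1}\mathcal R_n$, further intersected with the bumpy metric set and with the non-degeneracy conditions indexed by the countably many branches of embedded closed geodesics of each metric, yields the generic set of $\mathcal C^k$ metrics asserted by the theorem.
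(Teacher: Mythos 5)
Your overall scheme---White's bumpy metric theorem to control the geodesics, followed by a Sard--Smale transversality argument on a finite-dimensional map whose zeros parametrize Bouncing Jacobi Fields and whose regularity at~$0$ encodes their non-degeneracy, using metric perturbations supported near the geodesic---is precisely the paper's strategy. The map $F^n=\nabla_s\mathcal H_n^g$ you use is the paper's $F$ up to reparametrization (since $\partial_{s_j}\mathcal H_n=-N_j\bigl(\partial_s\Phi(s_j^+)+\partial_s\Phi(s_j^-)\bigr)$ with $N_j>0$), so the two are not genuinely different routes. There are, however, two places where your sketch glosses over points that the paper handles precisely, and one of them is a real gap in the reasoning as written.

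The gap is in the claim that a perturbation $\dot g_i$ supported in a small arc $I_i$ around $s_i^0$ ``produces a nonzero $i$-th component of $D_gF^n(\dot g_i)$ and vanishing $j$-th components for $j\neq i$.'' That is not a consequence of localization: the induced variation $\dot\Phi$ solves a Dirichlet problem on the full adjacent intervals $(s_{i-1},s_i)$ and $(s_i,s_{i+1})$, so a source supported near $s_i$ changes $\partial_s\dot\Phi(s_{i-1}^+)$ and $\partial_s\dot\Phi(s_{i+1}^-)$ as well, hence generically perturbs the $(i\pm1)$-th components of $F^n$. What is true---and what the paper proves---is that on each interval $(s_j,s_{j+1})$ the linear map $\dot z\mapsto\bigl(\partial_s\dot\Phi(s_j^+),\partial_s\dot\Phi(s_{j+1}^-)\bigr)$ is surjective onto $\R^2$; so one can prescribe $\partial_s\dot\Phi(s_j^+)=0$ and $\partial_s\dot\Phi(s_j^-)=\xi_j$ for any $\xi\in\R^n$, which gives surjectivity of $D_gF^n$ directly. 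Your diagonalization is achievable, but only after imposing these two interior conditions on $\dot g_i$; you cannot get it by shrinking $I_i$ alone. The second, smaller point is the choice of perturbation that keeps $\gamma$ a geodesic while moving $K_g$ along $\gamma$: ``conformal perturbations in a tubular neighborhood'' will move the geodesic unless the conformal factor vanishes to second order along $\gamma$; the paper avoids this altogether by taking $g_z=g+z(s)\,t^2\,ds^2$ in Fermi coordinates, which manifestly preserves $\gamma$ and gives $D_zK_{z}|_{z=0}(\dot z)=-\dot z$ on $\gamma$. Finally, your globalization via ``countable intersection over branches'' needs justification, as branches of geodesics are only locally defined in $g$; the paper sidesteps this by working directly on the Banach manifold $\mathcal K$ of (metric, geodesic) pairs and invoking Uhlenbeck's transversality theorem.
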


\medskip

Collecting Theorem~\ref{th:1.7}, Theorem~\ref{th:1.5} and Theorem~\ref{th:1.4}, we get the:
\begin{corollary}
\label{co:1.8}
For generic metric on $M$, if $\gamma$ is an embedded closed geodesic such that $K_g >0$ along $\gamma$, and if $n\in \N$ satisfies
\[
n^2 \, \pi^2 > |\gamma|^2 \,  \sup_{\gamma}K_g,
\]
then, for all $\varepsilon >0$ small enough, there exist solutions of the Allen-Cahn equation whose Morse index is given by $\text{Ind} \, (\gamma) + 2n$ and whose transition layers converge to $\gamma$ with multiplicity $2$.
\end{corollary}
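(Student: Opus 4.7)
My plan is to deduce the corollary as a direct concatenation of the three main theorems stated in the introduction, applying Theorem~\ref{th:1.5} to the specific choices ${\bf G}^1 := \emptyset$ and ${\bf G}^2 := \{\gamma\}$. Since ${\bf G}^1$ is empty, its requirement to be the boundary of a smooth compact domain is vacuously satisfied, so the remaining hypotheses to verify are the non-degeneracy of $\gamma$, its two-sidedness, and the existence of a non-degenerate Bouncing Jacobi Field with exactly $n$ minimums for which $n_\gamma + \mathrm{Ind}(\Phi_\gamma) = 2n$.

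First, I would invoke Theorem~\ref{th:1.4}: the hypotheses $K_g > 0$ along $\gamma$ and $n^2 \pi^2 > |\gamma|^2 \sup_\gamma K_g$ are exactly what is required there, and they produce a Bouncing Jacobi Field $\Phi_\gamma$ with $n$ minimums satisfying $\mathrm{Ind}(\Phi_\gamma) + \mathrm{Nullity}(\Phi_\gamma) = n$. Next I would invoke Theorem~\ref{th:1.7}: for a generic $\mathcal{C}^k$ metric on $M$, every embedded closed geodesic is non-degenerate and every Bouncing Jacobi Field that it carries is non-degenerate. Applied to $\gamma$ and $\Phi_\gamma$, this simultaneously gives the non-degeneracy of $\gamma$ and forces $\mathrm{Nullity}(\Phi_\gamma) = 0$, hence $\mathrm{Ind}(\Phi_\gamma) = n$ on the nose.

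Two-sidedness is a topological condition on $\gamma$ rather than a metric condition, so genericity is of no help there; if $\gamma$ fails to be two-sided, I would invoke Remark~\ref{re:1.6} and carry out the construction on the two-sided double cover, on which the generic non-degeneracies still transfer. Feeding the data into Theorem~\ref{th:1.5}, the Morse index formula collapses to
\[
\mathrm{Ind}(\gamma) + n_\gamma + \mathrm{Ind}(\Phi_\gamma) = \mathrm{Ind}(\gamma) + n + n = \mathrm{Ind}(\gamma) + 2n,
\]
and the transition layers converge to $\gamma \in {\bf G}^2$ with multiplicity two, exactly as claimed. The only real content of the proof is the bookkeeping that ensures the non-degeneracy hypotheses propagate consistently through Theorems~\ref{th:1.4}, \ref{th:1.5} and \ref{th:1.7}; once that is in place, the corollary follows immediately, with no further analysis required.
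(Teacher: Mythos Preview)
Your proposal is correct and matches the paper's own approach exactly: the paper simply states that Corollary~\ref{co:1.8} is obtained by ``collecting Theorem~\ref{th:1.7}, Theorem~\ref{th:1.5} and Theorem~\ref{th:1.4},'' without further argument. Your bookkeeping---applying Theorem~\ref{th:1.4} to produce a Bouncing Jacobi Field with $\mathrm{Ind}(\Phi_\gamma)+\mathrm{Nullity}(\Phi_\gamma)=n$, then using the genericity of Theorem~\ref{th:1.7} to force $\mathrm{Nullity}(\Phi_\gamma)=0$ and the non-degeneracy of $\gamma$, and finally plugging ${\bf G}^1=\emptyset$, ${\bf G}^2=\{\gamma\}$ into Theorem~\ref{th:1.5}---is precisely the intended chain, and your handling of two-sidedness via Remark~\ref{re:1.6} is a sensible addition.
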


Given a closed embedded geodesic in $M$ and $n\geq 1$, uniqueness of the Bouncing Jacobi Fields having $n$ minimums is not true in general.
For example, when $n=1$ and under suitable assumption on the Gauss curvature, we will see that there are at least $2$ distinct Bouncing Jacobi Fields with one minimum, giving rise to two distinct solutions of the Allen-Cahn equation, one of which having a Morse index strictly smaller than the one described in Theorem~\ref{th:1.4}.
Regarding other construction of solutions with bounded Morse indices, it is also worth mentioning that existence and symmetry of ground states for the Allen-Cahn equation on the round sphere have been studied in \cite{Caju}.

\medskip

The tools developed to prove Theorem~\ref{th:1.5} can  very likely be generalized to produce solutions of the Allen-Cahn equations whose transition layers converge to a given geodesic with multiplicity $k\geq 3$, while their Morse indices stays constant and their energy remains uniformly bounded as $\varepsilon$ tends to $0$.

\medskip

As we have discussed above, Bouncing Jacobi Fields do exist in abundance, at least when the Gauss curvature along a geodesic is positive, while, when the Gauss curvature along the geodesic is negative, Bouncing Jacobi Fields do not exist and we expect that, solutions of the Allen-Cahn equation whose transition layers converge to a given geodesic $\gamma$ with  multiplicity larger than $1$ do not exist when the Gauss curvature along $\gamma$ is negative.

\medskip

To complete the picture, let us mention that, in a forthcoming paper \cite{LPW2}, we will consider the case where the transition layers of the solutions of the Allen-Cahn equation converges to the union of of geodesics that are not necessarily disjoint.
In which case, we obtain the existence of solutions to the Allen-Cahn equation whose Morse index is equal to the sum of the Morse indices of the geodesics and the number of intersection points and self intersections of the union of the geodesics.
These results are first steps toward a classification of critical points of the Allen-Cahn functional with controlled energy and Morse index.

\medskip

The paper is organized in the following way.
In Section 2, we discuss the existence and non existence of Bouncing Jacobi Fields which have been defined in Definition~\ref{de:1.3} and, in Section 3, we study the Morse Index of a given Bouncing Jacobi Field.
Bouncing Jacobi Fields are at the heart of our construction since they are an essential element which will allow us to prove, in Section 4, the existence of special solutions $\Psi : \gamma \to \R$ of the Jacobi-Toda equation
\[
\varepsilon^2 \, \mathfrak J_\gamma \Psi + \bar c^2\, e^{-2 \Psi} =0,
\]
provided $\varepsilon >0$ is small enough, where $\bar c >0$ is a universal constant. These special solutions are obtained by smoothing the singularities (i.e. the points  where the function is continuous and not smooth) of a given Bouncing Jacobi Field $\Phi$ using suitably scaled copies of solutions $T : \R \to \R$ of the Toda equation
\[
\partial_s^2 T - e^{-2T} =0.
\]
Section 5 will be devoted to the study of the Morse index of the solutions $\Psi$ constructed in the previous section.
We will prove that the Morse index of $\Psi$ is the sum of the Morse index of $\gamma$ and the number of minimums of $\Phi$.
In Section 6, starting from $\Psi$, a special solutions of the Jacobi-Toda equation, we will construct a solution of the Allen-Cahn equation whose transition layers are normal graphs over $\gamma$ for the functions $\pm \varepsilon\, \Psi$.
This construction  is by now standard and starts with the construction of an approximate solution  which is then perturbed into a genuine solution of the Allen-Cahn equation.
Once the existence of the desired solution of the Allen-Cahn equation is complete, we will study their Morse index in Section 7.
Therefore, in some sense, we reduce the existence of solutions to the Allen-Cahn equation whose transition layers converge to a given geodesic with multiplicity $2$ to the existence of Bouncing Jacobi Fields.
Finally, Section 8 is devoted to the proof of the fact that, for generic metrics on $M$, Bouncing Jacobi Fields are non-degenerate.

\medskip

\textbf{Acknowledgement} Y. Liu is supported by  the National Key R\&D Program of China 2022YFA1005400 and NSFC 11971026, NSFC 12141105. J. Wei is partially supported by NSERC of Canada.
\section{Bouncing Jacobi Fields}
In the case where the Gauss curvature is constant (positive) along the embedded geodesic $\gamma$, the existence of Bouncing Jacobi Fields with $n$ minimums is straightforward. Indeed, we have the:
\begin{lemma}
\label{le:2.1}
When $K_g$ is constant and positive along $\gamma$, Bouncing Jacobi Fields with $n$ minimums do exist if and only if $\pi^2 \, n^2 > |\gamma|^2 \, K_g$, where $|\gamma|$ denotes the length of $\gamma$.
\end{lemma}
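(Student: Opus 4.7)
The plan is to reduce the problem to an explicit ODE computation on each subinterval and then combine the constraints via the bouncing condition.

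First I would use that with $K_g\equiv K$ constant along $\gamma$, the equation $\mathfrak J_\gamma\Phi=0$ is just $\Phi''+K\Phi=0$, whose solutions on any interval $(s_j,s_{j+1})$ are elementary trigonometric functions. Using $\Phi(s_j)=1$ and $\partial_s\Phi(s_j^+)>0$ I would write
\[
\Phi(s)=R_j\cos\!\bigl(\sqrt{K}(s-s_j)-\theta_j\bigr),\qquad s\in[s_j,s_{j+1}],
\]
with $R_j>0$, $\theta_j\in(0,\pi/2)$, and the normalisation $R_j\cos\theta_j=1$.

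Next I would extract the interval length from the two remaining conditions $\Phi(s_{j+1})=1$ and $\Phi>1$ on $(s_j,s_{j+1})$. The inequality $\Phi>1$ amounts to $\cos(\sqrt{K}(s-s_j)-\theta_j)>\cos\theta_j$, which forces the argument to stay in $(-\theta_j,\theta_j)$; hence the first return to the value $1$ happens when that argument reaches $\theta_j$, giving
\[
L_j:=s_{j+1}-s_j=\frac{2\theta_j}{\sqrt{K}}<\frac{\pi}{\sqrt{K}}.
\]

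Then I would exploit the bouncing condition. Differentiating the formula above yields $\partial_s\Phi(s_j^+)=\sqrt K R_j\sin\theta_j$ and $\partial_s\Phi(s_{j+1}^-)=-\sqrt K R_j\sin\theta_j$. The requirement $\partial_s\Phi(s_{j+1}^+)=-\partial_s\Phi(s_{j+1}^-)$, applied at each interior bounce point and read cyclically, gives
\[
R_j\sin\theta_j=R_{j+1}\sin\theta_{j+1}\qquad\text{for all }j,
\]
which combined with $R_j\cos\theta_j=1$ forces $\tan\theta_j$, hence $\theta_j$, to be independent of $j$. Writing $\theta$ for the common value, the lengths are equal, $L_j=|\gamma|/n$, and the closing condition $\sum L_j=|\gamma|$ reads
\[
\theta=\frac{\sqrt K\,|\gamma|}{2n}.
\]

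Finally, the requirement $\theta\in(0,\pi/2)$ is equivalent to $\pi^2 n^2>|\gamma|^2 K$, giving necessity. For sufficiency, under the same inequality I would define $\theta$ by the displayed formula, set $R=1/\cos\theta$, choose equispaced points $s_j=(j-1)|\gamma|/n$, and check directly that the piecewise function $\Phi(s)=R\cos(\sqrt K(s-s_j)-\theta)$ on $[s_j,s_{j+1}]$ satisfies every item of Definition~\ref{de:1.3}. There is no real obstacle in this lemma; the only subtle point is realising that the bouncing condition, together with the Dirichlet-type value $\Phi(s_j)=1$, rigidly forces the phases $\theta_j$ to coincide, so that in the constant-curvature case Bouncing Jacobi Fields with $n$ minimums are unique up to rotation along $\gamma$.
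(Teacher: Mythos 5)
Your argument is correct and uses the same basic device as the paper, namely writing the solution on each subinterval as an explicit cosine and reading off the phase constraint. The paper's own proof is one line and only exhibits the $2s_n$-periodic example, implicitly relying on uniqueness for the ``only if'' direction; you go further and actually derive necessity, by showing that the bounce relation $R_j\sin\theta_j=R_{j+1}\sin\theta_{j+1}$ together with $R_j\cos\theta_j=1$ forces all phases $\theta_j$ to coincide and hence all subintervals to have length $|\gamma|/n$, from which the inequality follows. That makes your write-up a strictly more complete version of the same calculation; nothing is missing, and the one subtle point (that the constraints rigidify the phases) is correctly identified and handled.
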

\begin{proof}
We set $s_n:= \frac{|\gamma|}{2n}$, the Bouncing Jacobi field (unique up to translation) is given by function
\[
\Phi (s)= \frac{\cos \left( \sqrt{K_g} s\right) \, }{\cos\left( \sqrt{K_g} s_n\right)},
\]
on  $\left(-s_n, s_n\right)$ and extended to be $2s_n$-periodic.
\end{proof}

In the general case, we have the following theorem, where the existence part is a direct application of Theorem 1.2 in \cite{Torr} (the equation being known as Hill's equation with obstacle).

\begin{theorem}
\label{th:2.2} Assume that $K_g \geq  0$ along $\gamma$ and let $n \geq 1$ be given. Then, $\mathfrak J_\gamma$ has at least one Bouncing Jacobi Field with $n$ local minimums provided
\begin{equation}
\pi^2 \, n^2 > |\gamma|^2 \, \sup K_g,
\label{eq:2.1}%
\end{equation}
and $\mathfrak J_\gamma$ does not have any Bouncing Jacobi Field with $n$ local minimums when
\begin{equation}
\label{eq:2.2}
\pi^2 \, n^2 < |\gamma|^2 \, \inf K_g.
\end{equation}
\end{theorem}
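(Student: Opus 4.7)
My plan is to prove the two parts separately, using different techniques. For the non-existence under (\ref{eq:2.2}), I would use Sturm comparison between the Jacobi equation $\Phi'' + K_g \Phi = 0$ and the constant-coefficient model $v'' + (\inf K_g)\,v = 0$. Suppose $\Phi$ is a Bouncing Jacobi Field with minimums $s_1 < \cdots < s_n$. On each closed arc $\overline{I_j} = [s_j, s_{j+1}]$, $\Phi$ is smooth and stays $\geq 1 > 0$; in particular $\Phi$ has no zero there. The test function $v(s) := \sin\bigl(\sqrt{\inf K_g}\,(s - s_j)\bigr)$ vanishes at $s_j$ and next at $s_j + \pi/\sqrt{\inf K_g}$. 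Because $K_g \geq \inf K_g$, Sturm comparison (or Sturm separation in the degenerate case $K_g \equiv \inf K_g$, where $\Phi$ and $v$ are linearly independent because $\Phi(s_j)=1\neq 0=v(s_j)$) places a zero of $\Phi$ strictly between any two zeros of $v$ lying in $\overline{I_j}$. The absence of zeros of $\Phi$ on $\overline{I_j}$ therefore forces $s_{j+1} - s_j < \pi/\sqrt{\inf K_g}$ for every $j$. Summing over $j = 1, \ldots, n$ with the convention $s_{n+1} = s_1 + |\gamma|$ yields $|\gamma|\sqrt{\inf K_g} < n\pi$, contradicting (\ref{eq:2.2}).

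For the existence under (\ref{eq:2.1}), as noted just above the statement this follows from Theorem 1.2 of \cite{Torr}, so my plan is to verify the hypotheses of that theorem in the present setting. Identifying $\gamma$ with $\R/|\gamma|\Z$, a Bouncing Jacobi Field is precisely a periodic solution (of period $|\gamma|$) of the Hill-equation obstacle problem $\mathfrak{J}_\gamma \Phi \geq 0$, $\Phi \geq 1$, with the complementarity condition $(\mathfrak{J}_\gamma \Phi)(\Phi - 1) = 0$ and exactly $n$ contact points. The substantive input required by \cite{Torr} is a positive Dirichlet spectral gap for $-\partial_s^2 - K_g$ on short arcs; this is exactly what (\ref{eq:2.1}) supplies, since the Poincaré-type estimate $\lambda_1(-\partial_s^2 - K_g, I, \mathrm{Dir}) \geq \pi^2/|I|^2 - \sup K_g$ gives $\lambda_1 \geq n^2\pi^2/|\gamma|^2 - \sup K_g > 0$ on any arc $I$ of length at most $|\gamma|/n$.

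The main technical point, and the only place where I expect nontrivial work, is producing a critical configuration of contact points $s_1, \ldots, s_n$ at which the matching condition $\partial_s \Phi(s_j^+) = -\partial_s \Phi(s_j^-) > 0$ holds. In the variational language introduced in the paper, this is exactly the first-order stationary condition for $\mathcal H_n$. As a fallback to citing \cite{Torr}, I would try to minimize $\mathcal H_n$ directly over ordered $n$-tuples on $\gamma$: the spectral gap above gives a uniform lower bound on $\mathcal H_n$, while the blow-up $\lambda_1 \to +\infty$ as two consecutive contact points collide prevents the minimizing configuration from degenerating. The strict positivity $\partial_s \Phi(s_j^+) > 0$ at the contact points then follows from the Hopf boundary point lemma applied to $\Phi - 1 > 0$ on each arc $I_j$, together with the reflection symmetry imposed by the stationarity of $\mathcal H_n$ in $s_j$.
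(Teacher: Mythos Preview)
Your non-existence argument via Sturm comparison is correct and is essentially the paper's Lemma~\ref{le:2.6} (the paper picks one arc of length at least $|\gamma|/n$ rather than bounding every arc and summing, but the content is identical). Citing \cite{Torr} for existence is also legitimate; the paper itself notes this.

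Your fallback variational argument, however, has the sign backwards: you must \emph{maximize} $\mathcal H_n$, not minimize it. Two things go wrong with minimization. First, $\mathcal H_n$ is not bounded below on $\Omega_n$: your estimate $\lambda_1(I)\geq \pi^2/|I|^2-\sup K_g$ only makes $H(s_j,s_{j+1})$ finite when $s_{j+1}-s_j<\pi/\sqrt{\sup K_g}$, and configurations with one long arc give $H=-\infty$ there. Second, and more fundamentally, collision of two consecutive points \emph{decreases} $\mathcal H_n$. The key inequality is the strict subadditivity
\[
H(s,t) \;<\; H(s,r)+H(r,t)\qquad\text{for any }r\in(s,t),
\]
obtained by gluing the two minimizers at $r$ to get a non-smooth competitor for $H(s,t)$. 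Hence as $s_j\to s_{j+1}$ one has $\mathcal H_n\to \mathcal H_{n-1}$, which is strictly \emph{smaller} than nearby interior values of $\mathcal H_n$. A minimizing sequence therefore runs to $\partial\Omega_n$, while a maximizing sequence is trapped in the interior. Condition~(\ref{eq:2.1}) enters only to show that the equispaced configuration has $\mathcal H_n>-\infty$, so that $\sup_{\Omega_n}\mathcal H_n$ is finite (the bound $\mathcal H_n\leq 0$ coming from $K_g\geq 0$), and then subadditivity delivers an interior maximizer. Your sentence ``$\lambda_1\to+\infty$ as two contact points collide prevents degeneration'' conflates the Dirichlet eigenvalue on the shrinking arc (which does blow up) with $H$ on that arc (which tends to $0$, not $+\infty$). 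A side benefit of the max approach, exploited later in the paper, is that it pins down the Morse index of the resulting Bouncing Jacobi Field.
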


The method used in \cite{Torr} is  the Poincaré-Birkhoff Theorem about the existence of fixed points for area preserving maps in dimension two.
We also refer to  Theorem 8.4 and Corollary 8.6 in \cite{Muc}, for a detailed introduction to the Poincaré-Birkhoff Theorem.

\medskip

We point out that the proof in \cite{Torr} does not provide any information about the Morse indices of the Bouncing Jacobi Fields.
For this reason, we give here a (rather independent) proof of Theorem~\ref{th:2.2}.
As we will see, our proof captures more information about the Morse indices of the solutions.

\medskip

Our proof is based on the existence of a max-min solution for some energy we now describe.
For $s_{1} < s_{2}$, we define
\[
H\left(s_{1},s_{2}\right) :=\inf_{\phi-1\in H_{0}^{1}\left(s_{1},s_{2}\right)} \int_{s_{1}}^{s_{2}}\left(|\partial_s\phi|^2-K_g \, \phi^{2} \right) \, ds .
\]
Observe that the infimum may well be equal to $-\infty$. On the other hand, if $H\left(s_{1},s_{2}\right) > -\infty,$ then the infimum is achieved for some function $\phi$ solution of
\[
\mathfrak J_\gamma \phi =0,
\]
in $(s_1, s_2)$ with boundary data $\phi (s_1) = \phi (s_2)=1$.
Standard arguments imply that $\phi > 0$ in $[s_1, s_2]$ (since $|\phi|$ is also a minimizer).
The maximum principle, together with the fact that we have assumed that $K_g$ is positive along $\gamma$, then implies that
\[
\phi \left(s\right) > 1,
\]
in $(s_1, s_2)$.
Finally, using an integration by parts, we conclude that
\[
H\left( s_{1},s_{2}\right) = \partial_s \phi \left(  s_{2}\right) -\partial_s \phi \left(s_{1}\right) \leq  0.
\]

This being understood, we consider the set
\[
\Omega_n :=\left\{ \left(s_{1},s_2, \ldots, s_{n}\right) \, : \, 0\leq s_{1}<  \ldots < s_{n} < |\gamma| \right\},
\]
and, for all $\left(s_{1},\ldots , s_{n}\right) \in \Omega_n$, we define
\[
\mathcal H_n\left( s_{1},\ldots ,s_{n}\right) := \sum_{i=1}^{n} H\left(s_{i},s_{i+1}\right)  .
\]
where we agree that $s_{n+1}:=s_{1}+|\gamma|$.

\medskip

\begin{proof}
[Proof of Theorem~\ref{th:2.2}] The proof is decomposed into three parts.
In the first part, we give a sufficient condition for the existence of a critical point of $\mathcal H_n$.
Then, we derive the first variation of $\mathcal H_n$ and prove that  a critical point  of this functional is associated to a Bouncing Jacobi Field with $n$ minimums.
Finally, we give some necessary conditions for the existence of Bouncing Jacobi Field with $n$ minimums.

\begin{lemma}
\label{le:2.3}
Assume that $K_g > 0$ and
\begin{equation}
\pi^2\,n^2>|\gamma|^2 \displaystyle \sup_\gamma K_g.
\label{eq:2.3}
\end{equation}
Then, there exists a critical point of $\mathcal H_n$ and in fact this point can be chosen to be a point where $\mathcal H_n$ is maximal.
\end{lemma}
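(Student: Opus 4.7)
The plan is to extend $\mathcal H_n$ upper-semicontinuously to the compact closure $\overline{\Omega_n}$ (allowing consecutive points to coincide, with the convention $H(s,s)=0$, and setting $H(s_1,s_2)=-\infty$ whenever the first Dirichlet eigenvalue of $-\partial_s^2-K_g$ on $(s_1,s_2)$ fails to be positive), show that the resulting supremum is finite and attained, and then rule out boundary maximizers via a strict superadditivity property. Any interior maximizer will automatically be a critical point of $\mathcal H_n$.

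First I would verify that $\sup\mathcal H_n>-\infty$. The hypothesis $\pi^2 n^2 > |\gamma|^2 \sup_\gamma K_g$ gives $|\gamma|/n < \pi/\sqrt{\sup_\gamma K_g}$, so at the equi-spaced configuration every subinterval has length $\ell=|\gamma|/n$, and the Rayleigh bound
\[
\lambda_1\bigl(-\partial_s^2-K_g;\,(s_i,s_{i+1})\bigr) \;\ge\; \frac{\pi^2}{\ell^2}-\sup_\gamma K_g \;>\; 0
\]
ensures each $H(s_i,s_{i+1})$ is finite. Together with continuity of $H$ on the open locus where it is finite, divergence to $-\infty$ at the boundary of that locus, and $H(s_1,s_2)\to 0$ as $s_2-s_1\to 0$, this yields upper semicontinuity of $\mathcal H_n$ on the compact set $\overline{\Omega_n}$. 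Hence a maximizer $(s_1^\ast,\ldots,s_n^\ast)\in\overline{\Omega_n}$ exists with value $M:=\mathcal H_n(s_1^\ast,\ldots,s_n^\ast)>-\infty$.

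The central ingredient is the \emph{strict superadditivity} of $H$: whenever $a<b<c$ lie on $\gamma$ with $H(a,c)>-\infty$,
\[
H(a,b)+H(b,c) \;>\; H(a,c).
\]
To prove it I would paste the minimizers $\phi_1$ on $(a,b)$ and $\phi_2$ on $(b,c)$ into a continuous function $\Phi$ on $(a,c)$ satisfying $\Phi(a)=\Phi(b)=\Phi(c)=1$; since $\Phi$ is admissible for the variational problem defining $H(a,c)$, direct computation gives $\int_a^c(|\partial_s\Phi|^2-K_g\Phi^2)\,ds = H(a,b)+H(b,c) \ge H(a,c)$. For strictness, note that $H(a,c)>-\infty$ makes the Dirichlet form strictly convex on $1+H_{0}^{1}(a,c)$, so the unique minimizer $\phi_0$ is the only possibility for equality; but $\phi_0$ is smooth and, by the discussion preceding the lemma, satisfies $\phi_0>1$ in the interior, hence $\phi_0(b)>1=\Phi(b)$, so $\Phi\neq\phi_0$ and the inequality is strict.

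Finally I rule out boundary maximizers. Suppose towards a contradiction that the maximizer lies on $\partial\Omega_n$, so that it corresponds to an effective configuration of $n'<n$ distinct points $t_1<\ldots<t_{n'}$ with $\mathcal H_{n'}(t_1,\ldots,t_{n'})=M$. Finiteness of $M$ forces each $H(t_j,t_{j+1})$ to be finite, and strict superadditivity lets me insert a point into some subinterval to produce an $(n'+1)$-point configuration with $\mathcal H_{n'+1}>M$. Inserting $n-n'-1$ further points very close to existing ones (but distinct from them) and using continuity of $\mathcal H_n$ on the locus of finite values then yields a configuration in $\Omega_n$ whose value still exceeds $M$, contradicting the definition of $M$. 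Hence the maximizer lies in $\Omega_n$, giving the desired critical point. The delicate step is the strict superadditivity lemma (in particular extracting strictness from uniqueness of the Dirichlet minimizer); everything else is compactness, continuity, and spectral estimates.
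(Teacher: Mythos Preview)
Your proposal is correct and follows essentially the same strategy as the paper: bound $\mathcal H_n$ from above (by $0$, since $K_g>0$) and from below at the equi-spaced configuration, then use strict superadditivity of $H$ to rule out maximizers on $\partial\Omega_n$. The only real difference is that the paper states the key observation $H(s,t)<H(s,\tfrac{s+t}{2})+H(\tfrac{s+t}{2},t)$ without proof and iterates it $n-k$ times, whereas you supply a clean proof of the more general strict superadditivity (via uniqueness of the Dirichlet minimizer and $\phi_0(b)>1$) and then insert the missing points one at a time; your final ``continuity'' step is in fact unnecessary, since each insertion already strictly increases $\mathcal H$ by the same superadditivity.
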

\begin{proof}
As we have mentioned above, since it is assumed that $K_g > 0$ along $\gamma$, there holds
\[
H\left(s_{i},s_{i+1}\right) \leq  0,
\]
and hence $\mathcal H_n\leq 0$.
To produce critical points of $\mathcal H_n$, it is therefore reasonable to look for a point of $\Omega_n$ where $\mathcal H_n$ achieves a local maximum.

\medskip

Let us prove that (\ref{eq:2.3}) ensures that $\displaystyle \sup_{\Omega_n}\mathcal H_n > -\infty$. Indeed, the $n$-tuple $\left(\bar{s}_{1}, \ldots ,\bar {s}_{n}\right)$ where $\bar{s}_{j} := \frac{|\gamma|}{n}\,j$ belongs to $\Omega_n$ and, using Poincaré inequality, we estimate
\begin{align*}
H\left(\bar{s}_{i},\bar{s}_{i+1}\right) & =\inf_{\phi\in H_{0}^{1}\left(\bar{s}_{i},\bar{s}_{i+1}\right)  }\int_{\bar{s}_{i}}^{\bar{s}_{i+1}}\left(|\partial_s \phi|^2-K_g \, \left(\phi+1\right)^{2}\right) ds\\
& \geq\inf_{\phi\in H_{0}^{1}\left(  \bar{s}_{i},\bar{s}_{i+1}\right)} \int_{\bar{s}_{i}}^{\bar{s}_{i+1}}\left( \left(  \frac{n^{2}\pi^2}{|\gamma|^2}-K_g\right) \phi^{2}-2K_g\phi- K_g\right) ds \\
&  > -\infty.
\end{align*}
Therefore,
\[
\mathcal{M}_n:=\sup_{{\bf s}\in\Omega_n} \mathcal{H}_n\left( {\bf s}\right)  > -\infty .
\]

We claim that $\mathcal{M}_n$ is achieved in $\Omega_n$. The \emph{key observation} is the fact that, if $s < t$ satisfy $H\left(s,t\right)  >-\infty,$ then
\[
H\left(s,t\right) < H\left(s,\frac{s+t}{2}\right) + H\left(  \frac{s+t}{2},t\right).
\]
Therefore, introducing an extra minimum increases the energy.

\medskip

Now, let ${\bf s}_i \in \Omega_n$ be a maximizing sequence for $\mathcal H_n$ over $\Omega_n$. Then, by definition,
\[
-\infty < \mathcal M_n = \lim_{i\to +\infty} \mathcal H_n({\bf s}_i).
\]
and, up to a subsequence, we can assume that $({\bf s}_i)_{i \in \bf N}$ converges to ${\bf s}_\infty \in \overline \Omega_n$.
We now argue by contradiction and assume that ${\bf s}_\infty \notin \Omega_n$ and hence ${\bf s}_\infty \in \Omega_k \cap \partial \Omega_n$, for some $k < n$.  It is easy to see that
\[
-\infty < \mathcal M_n = \lim_{i\to +\infty} \mathcal H_n ({\bf z}_i) = \mathcal H_k ({\bf s}_\infty).
\]
But, using the above key observation $n-k$ times, if this were the case, we would conclude that
\[
\mathcal H_k ({\bf z}_\infty) < \mathcal M_{n},
\]
which is a contradiction and this proves the claim. To conclude, we have obtained a critical point of $\mathcal H_n$ and, in fact, this critical point maximises $\mathcal H_n$ over $\Omega_n$.
\end{proof}

\medskip

\begin{remark}
\label{re:2.4}
In the case where $n=1,$ the analysis simplifies since we just consider the function
\[
\mathcal{H}_1\left( s\right) :=H\left(  s,s+|\gamma|\right),
\]
for $s \in \R$.
This is a continuous and periodic function (of period $|\gamma|$), and if we assume that $|\gamma|^2 \, \sup_\gamma K_g < \pi^2$, the function $\mathcal{H}_1$ is bounded from below.
It follows that $\mathcal{H}_1$ has at least two critical points, corresponding to a global maximum and minimum of $\mathcal{H}_1$ on $\gamma$.
If the maximum equals minimum, then $\mathcal{H}_1$ is constant and hence has infinitely many critical points (this also implies that $K_g$ is constant along $\gamma$).
\end{remark}

\medskip

We will now prove that the function $\Phi$ associated to a \textit{critical point} of $\mathcal H_n$ is a Bouncing Jacobi Fields with $n$ minimums.
This will be a consequence of the computation of the first variation of $\mathcal H_n$ at a point $\left(s_{1},\ldots ,s_{n}\right) \in \Omega_n$.

\medskip

\begin{proposition}
Assume that $K_g >0$ along $\gamma$ and that $(s_1, \ldots, s_n)$ is a critical point of $\mathcal H_n$, then the associated function $\Phi$ is a Bouncing Jacobi Field with $n$ minimums.
\label{pr:2.5}
\end{proposition}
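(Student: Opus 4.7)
The plan is to verify the conditions of Definition~\ref{de:1.3} one by one. By construction of $\Phi$ (gluing the minimizers on each arc $(s_i,s_{i+1})$), the function is continuous on $\gamma$, smooth and solves $\mathfrak J_\gamma \Phi=0$ on $\gamma-\{s_1,\dots,s_n\}$, equals $1$ at each $s_j$, and (by the strong maximum principle with $K_g>0$, as recalled just before the statement) satisfies $\Phi>1$ on each open arc. The only condition not immediate from the construction is the symmetric jump relation
\[
\partial_s\Phi(s_j^+)=-\partial_s\Phi(s_j^-)>0,\qquad j=1,\dots,n,
\]
which must be deduced from the vanishing of $\nabla \mathcal H_n$ at $(s_1,\dots,s_n)$.

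\medskip

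The central step is the computation of the partial derivatives of $H(s_1,s_2)$. Denoting by $\phi=\phi_{s_1,s_2}$ the minimizer on $(s_1,s_2)$ (whose smooth dependence on $(s_1,s_2)$ follows from the non-degeneracy of the linear BVP at a critical point, where $H$ is finite), I would apply the Hadamard variational formula to $H(s_1,s_2)=\int_{s_1}^{s_2}(|\partial_s\phi|^2-K_g\phi^2)\,ds$. Moving an endpoint contributes a free-boundary term $\pm L(s_i)$ with $L=|\partial_s\phi|^2-K_g\phi^2$, while the constraint $\phi(s_i)=1$ forces the first-order variation of $\phi$ at the moving endpoint to equal $-\partial_s\phi(s_i)$, which feeds into the flux term $p\,\dot\phi=2\partial_s\phi\cdot(-\partial_s\phi)$. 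The interior contribution vanishes by the Euler--Lagrange equation $\mathfrak J_\gamma\phi=0$, and using $\phi(s_i)=1$ to simplify the Lagrangian at the endpoints yields the clean formulas
\[
\partial_{s_1}H(s_1,s_2)=|\partial_s\phi(s_1)|^2+K_g(s_1),\qquad \partial_{s_2}H(s_1,s_2)=-|\partial_s\phi(s_2)|^2-K_g(s_2).
\]

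\medskip

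Since $s_j$ is the right endpoint of $H(s_{j-1},s_j)$ and the left endpoint of $H(s_j,s_{j+1})$, the curvature terms at $s_j$ cancel in $\partial_{s_j}\mathcal H_n$, and the critical-point condition reduces to $|\partial_s\Phi(s_j^+)|^2=|\partial_s\Phi(s_j^-)|^2$. Because $\Phi>1$ on both adjacent open arcs while $\Phi(s_j)=1$, the function $\Phi$ has a minimum at $s_j$ from each side, so $\partial_s\Phi(s_j^+)\geq 0\geq \partial_s\Phi(s_j^-)$; together with the equality of squares this forces $\partial_s\Phi(s_j^+)=-\partial_s\Phi(s_j^-)$. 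Strict positivity then follows from $K_g>0$: if $\partial_s\Phi(s_j^+)=0$, the initial value problem $\partial_s^2\Phi=-K_g\Phi$ with data $\Phi(s_j)=1$, $\partial_s\Phi(s_j^+)=0$ would give $\partial_s^2\Phi(s_j^+)=-K_g(s_j)<0$, making $s_j$ a strict local maximum of $\Phi$ from the right and contradicting $\Phi>1$ on $(s_j,s_{j+1})$.

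\medskip

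The main delicate point of the argument is the bookkeeping in the Hadamard computation of $\partial_{s_i}H$, where one must simultaneously track the contribution from the moving domain of integration, from $K_g$ evaluated at a varying endpoint, and from the parametric minimizer $\phi_{s_1,s_2}$. Once the two formulas above are in hand, the rest of the proposition reduces to the short sign analysis just described.
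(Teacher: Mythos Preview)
Your proposal is correct and follows the same essential strategy as the paper: compute the first variation of $\mathcal H_n$ with respect to the nodes $s_j$, obtain $(\partial_s\Phi(s_j^+))^2=(\partial_s\Phi(s_j^-))^2$, and then use the sign constraints coming from $\Phi>1$ together with $K_g>0$ to conclude. Your Hadamard-style derivation of
\[
\partial_{s_1}H(s_1,s_2)=|\partial_s\phi(s_1)|^2+K_g(s_1),\qquad
\partial_{s_2}H(s_1,s_2)=-|\partial_s\phi(s_2)|^2-K_g(s_2),
\]
is correct (the induced variation $\dot\phi(s_i)=-\partial_s\phi(s_i)$ from the constraint $\phi(s_i)=1$ is exactly the right bookkeeping), and the cancellation of the $K_g(s_j)$ terms in $\partial_{s_j}\mathcal H_n$ reproduces the paper's first-order condition.

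The only substantive difference is one of organization and scope. The paper performs a more elaborate construction---perturbing all endpoints simultaneously, introducing auxiliary boundary data $\delta_j^\pm$, extending the solutions smoothly across $s_j$, and Taylor-expanding $\mathcal H_n(s_1^*,\ldots,s_n^*)$---because it is simultaneously setting up the \emph{second} variation formula needed for Proposition~\ref{pr:3.1}. For Proposition~\ref{pr:2.5} alone your direct computation is cleaner and entirely sufficient. Your argument for strict positivity (the initial value problem forces $\partial_s^2\Phi(s_j^+)=-K_g(s_j)<0$, making $s_j$ a local maximum from the right) is a minor variant of the paper's (which instead glues the two sides to produce a smooth Jacobi field with a positive interior minimum); both rest on the same fact that $K_g>0$ forbids positive local minima for solutions of $\mathfrak J_\gamma\Phi=0$.
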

\begin{proof}
Let us assume that $(s_1, \ldots, s_n)$, with $s_1< \ldots < s_n$ is a critical point of $\mathcal H_n$.
We agree that $s_{n+1}:=s_1+|\gamma|$ and $s_0: = s_n -|\gamma|$. We denote by $\Phi$ the function associated to this critical point and denote the restriction of $\Phi$ to $(s_j, s_{j+1})$ by $\phi_j$.
Therefore, for all $j=1, \ldots, n$, the function $\phi_j$ is a solution of $\mathfrak J_\gamma \phi_j =0$ in $(s_j, s_{j+1})$ with $\phi_j(s_j)= \phi_j(s_{j+1})=1$, and $\phi_j >1$ on $(s_j, s_{j+1})$ (observe that, since we have assumed that $K_g >0$, $\phi_j$ can't have a positive local minimum in $(s_{j}, s_{j+1})$).

\medskip

For all $s^*_j$ close to $s_j$ (for $j=1, \ldots, n$), we would like to understand the solution of $\mathfrak J_\gamma \tilde \phi_j =0$ on $(s_j^*, s_{j+1}^*)$ which satisfies $\tilde \phi_j (s_j) = \tilde \phi_j (s_{j+1})=1$ and $\tilde \phi_j >0$ in $(s^*_j, s^*_{j+1})$ and which is close to $\phi_j$.

\medskip

Given $\delta_j^-, \delta_j^+ \in \R$, for $j=1, \ldots , n$, we can solve
\[
\mathfrak J_\gamma     \eta_j = 0,
\]
in $(s_j, s_{j+1})$ with boundary data $\eta_j (s_j)= \delta_j^+$ and $\eta_j (s_{j+1}) = \delta_{j+1}^-$.
The existence of $\eta_j$ is guarantied by the fact that $\phi_j > 0$ over $[s_j, s_{j+1}]$ and hence the equation $\mathfrak J_\gamma     \zeta = 0$ on $(s_j, s_{j+1})$ with boundary data $\zeta (s_j) = \zeta (s_{j+1})=0$ does not have any nontrivial solution.

\medskip

Since we are dealing with a linear second order ordinary differential equation with bounded coefficients, the functions $s \mapsto (\phi_j + \eta_j) (s)$ which are smooth on $[s_j, s_{j+1}]$ can be extended as  \emph{smooth} functions $\psi_j$, solutions of $\mathfrak J_\gamma     \psi_j=0$ in an open interval containing $[s_j, s_{j+1}]$.
Using the fact that that $\phi_j (s_j)=1$ and also that
\[
\partial_s \psi_j (s_j) = \partial_s (\phi_j+\eta_j) (s_j) \qquad \text{and} \qquad
\partial_s \psi_j (s_{j+1}) = \partial_s (\phi_j+\eta_j) (s_{j+1}),
\]
we get the Taylor expansion of $\psi_j$ at $s_j$
\[
\begin{array}{rllll}
\psi_j (s_j+s)  & = &  \displaystyle  1 + \delta^+_j +  \partial_s (\phi_j +\eta_j) (s_j) \, s - K_g(s_j) (1+\delta^+_j) \, \frac{s^2}{2} \\[3mm]
 & - & \displaystyle \left(\partial_s K_g(s_j) \, (1+\delta^+_j) + K_g(s_j)\, \partial_s (\phi_j+\eta_j)(s_j)\right) \, \frac{s^3}{6} + \mathcal O (s^4),
\end{array}
\]
and the Taylor expansion of $\psi_{j-1}$ at $s_{j}$
\[
\begin{array}{rllll}
\psi_{j-1} (s_j+s) & = & \displaystyle 1 + \delta^-_{j} + \partial_s (\phi_{j-1} +\eta_{j-1}) (s_j) \, s - K_g(s_j) (1+\delta^-_j) \, \frac{s^2}{2} \\[3mm]
& - & \displaystyle \left(\partial_s K_g(s_j) (1+\delta^-_j) + K_g(s_j) \, \partial_s (\phi_{j-1}+\eta_{j-1})(s_j)\right) \, \frac{s^3}{6} + \mathcal O (s^4).
\end{array}
\]
Now, $s_j^*$ being chosen close enough to $s_j$, for all $j=1, \ldots, n$, we can determine  $\delta_j^\pm$ (close to $0$) so that
\[
\psi_{j-1} (s_j^*) = \psi_j  (s_j^*) = 1.
\]
It is easy to check that the  $\delta^\pm_j$ depend smoothly on the $s^*_j$ and we have the expansions \begin{equation}
\delta_j^+ - \delta_j^- = - N_j \, \dot s_j +\mathcal O (\dot s^2),
\label{eq:2.4}
\end{equation}
and
\begin{equation}
\begin{array}{rllll}
\delta^+_j + \delta^-_j & =& \displaystyle  - \partial_s (\phi_j + \phi_{j-1} )(s_j) \, \dot s_j - (\partial_s\eta_j + \partial_s\eta_{j-1} )(s_j) \, \dot s_j \\[3mm]
& + & \displaystyle K_g(s_j) \, \dot s_j^2 + \mathcal O(\dot s^3),
\end{array}
\label{eq:2.5}
\end{equation}
where we have defined
\[
\dot s_j := s_j^* -s_j,
\]
and
\[
N_j := -\partial_s (\phi_j - \phi_{j-1}) (s_j).
\]
It is important to keep in mind that
\[
\delta_j^\pm = \mathcal O (\dot s).
\]

We also have the expansions
\[
\begin{array}{rllll}
\partial_s \psi_j (s_j^*) & =&  \partial_s \psi_j (s_j) - K_g(s_j) \, (1 + \delta^+_j) \, \dot s_j  - \partial_s K_g(s_j) (1+\delta^+_j)  \\[2mm]
& + & \displaystyle K_g(s_j)\,  \partial_s \phi_j(s_j) \, \frac{\dot s_j^2}{2} + \mathcal O(\dot s^3),
\end{array}
\]
and
\[
\begin{array}{rllll}
\partial_s \psi_j (s_{j+1}^*) & =&  \partial_s \psi_j (s_{j+1}) - K_g(s_{j+1}) \, (1 + \delta^-_{j+1}) \, \dot s_{j+1} - \partial_s K_g(s_{j+1})\,  (1+\delta^-_{j+1})  \\[2mm]
& + & \displaystyle K_g(s_j)\,  \partial_s\phi_j (s_{j+1})\, \frac{\dot s_{j+1}^2}{2} + \mathcal O(\dot s^3).
\end{array}
\]

Since the functions $\psi_{j-1}$ and $\psi_{j}$ take the same value (equal to $1$) at $s_j^*$, the restrictions of $\psi_j$ to $[s_j^*, s_{j+1}^*$] can be patched together to obtain a function which is continuous on $\gamma$, smooth away from the $s_j^*$ and which solves $\mathfrak J_\gamma \psi_j = 0$ on $(s^*_j, s_{j+1}^*)$ with boundary data $\psi_j (s_j^*)=\psi_j(s_{j+1}^*)=1$.

\medskip

We are now in a position to compute the expansion of the functional $\mathcal H_n$ in powers of $\dot s_j$. We start with an integration by parts which leads to
\[
\begin{array}{rlll}
\mathcal H_n (s_1^*, \ldots, s_n^*) & = & \displaystyle \sum_{j=1}^n \int_{s_j^*}^{s_{j+1}^*}\left( (\partial_s \psi_j)^2 - K_g \, \psi_j^2\right) \, ds\\[3mm]
& = & \displaystyle \sum_{j=1}^n \left( \partial_s \psi_j(s_j^*)- \partial_s \psi_j(s_{j+1}^*)\right).
\end{array}
\]
Using the above expansions, we get
\[
\begin{array}{rllll}
\mathcal H_n (s_1^*,\ldots, s_n^*) & = & \mathcal H_n (s_1,\ldots, s_n) - \displaystyle   \sum_{j=1}^n \left(\partial_s \eta_j(s_{j+1}) - \partial_s\eta_{j}(s_j)\right) \\[3mm]
& + & \displaystyle \sum_{j=1}^n K_g(s_{j+1}) \, \delta_{j+1}^- \, \dot s_{j+1} - \sum_{j=1}^n K_g(s_j) \, \delta_j^+ \, \dot s_j  \\[3mm]
& + & \displaystyle \sum_{j=1}^n K_g(s_{j+1})\, \partial_s\phi_j(s_{j+1}) \frac{\dot s_{j+1}^2}{2}\\[3mm]
& - & \displaystyle  \sum_{j=1}^n K_g(s_j)\, \partial_s\phi_j(s_j) \frac{\dot s_j^2}{2} + \mathcal O(\dot s^3)\\[3mm]
& = & \mathcal H_n (s_1,\ldots, s_n) - \displaystyle   \sum_{j=1}^n \left(\partial_s \eta_j (s_{j+1}) - \partial_s\eta_{j} (s_j)\right) \\[3mm]
& - & \displaystyle \sum_{j=1}^n K_g(s_j) \, (\delta_{j}^+ - \delta_j^-)\, \dot s_j  \\[3mm]
& - & \displaystyle \sum_{j=1}^n K_g(s_j) \, \partial_s (\phi_j(s_j) -  \phi_{j-1} )(s_j)\, \frac{\dot s_j^2}{2} + \mathcal O(\dot s^3).
\end{array}
\]

Observe that, since $\phi_j$ and $\eta_j$ are both solutions of the same second order linear ordinary differential equation, we have
\[
0 = \int_{s_j}^{s_{j+1}} \left( \eta_j \mathfrak J_\gamma     \phi_j - \phi_j  \mathfrak J_\gamma     \eta_j \right) \, ds =
(\eta_j  \partial_s \phi_j - \phi_j  \partial_s \eta_j)(s_j) - (\eta_j  \partial_s \phi_j - \phi_j  \partial_s\eta_j)(s_{j+1}).
\]
Hence
\[
\partial_s \eta_j (s_{j+1}) - \partial_s \eta_{j}(s_j) =\delta_{j+1}^- \, \partial_s \phi_j (s_{j+1}) -  \delta^+_j \, \partial_s\phi_{j}(s_j) .
\]
Using this, we get with little work
\[
\begin{array}{rllll}
\mathcal H (s^*_1,\ldots,  s_n^*)& = & \mathcal H (s_1,\ldots,  s_n)  \displaystyle  + \frac{1}{2} \sum_{j=1}^n \partial_s \left(\phi_j + \phi_{j-1}\right)(s_j)\,  (\delta^+_j-\delta_j^-) \\[3mm]
 & +  &  \displaystyle \frac{1}{2} \sum_{j=1}^n \partial_s \left(\phi_j - \phi_{j-1}\right)(s_j)\,  (\delta^+_j+\delta_j^-) - \displaystyle \sum_{j=1}^n K_g(s_j) \, (\delta_{j}^+ -\delta_j^-) \, \dot s_j \\[3mm]
 & - & \displaystyle \sum_{j=1}^n K_g(s_j) \, \partial_s (\phi_j- \phi_{j-1} )(s_j)\, \frac{\dot s_j^2}{2} + \mathcal O(\dot s^3).
\end{array}
\]
Finally, we use (\ref{eq:2.4}) and (\ref{eq:2.5}) to conclude that
\begin{equation}
\begin{array}{llll}
\mathcal H_n (s^*_1, \ldots, s^*_n) &= & \displaystyle\mathcal H_n (s_1, \ldots, s_n) - \sum_{j=1}^n ((\partial_s\phi_j)^2- (\partial_s \phi_{j-1})^2)(s_j)\, \dot s_j  \\[3mm]
 & - & \displaystyle \sum_{j=1}^n  \partial_s (\phi_{j} - \phi_{j-1})  (s_j) \, \partial_s (\eta_{j} + \eta_{j-1})(s_j)\, \frac{\dot s_j^2}{2} \\[3mm]
 & + & \displaystyle
 \sum_{j=1}^n K_g(s_j) \, \partial_s ( \phi_j- \phi_{j-1} )(s_j)\, \frac{\dot s_j^2}{2} \\[3mm]
 & - &  \displaystyle \sum_{j=1}^n ((\partial_s\phi_j)^2- (\partial_s \phi_{j-1})^2)(s_j)\, \mathcal O (\dot s^2) + \mathcal O(\dot s^3).
\end{array}
\label{eq:2.6}
\end{equation}

Since we have assumed that $(s_1, \ldots, s_n)$ is a critical point of $\mathcal H_n$, the linear term in $\dot s_j$ has to vanish, and hence we get
\[
(\partial_s \phi_j)^2(s_j) = (\partial_s\phi_{j-1})^2(s_j).
\]
But, $\phi_{j-1}$ and $\phi_{j}$ are minimal at $s_j$, therefore, $\partial_s\phi_j (s_j) \geq 0$ and $\partial_s\phi_{j-1} (s_j)\leq 0$ and this implies that
\[
\partial_s (\phi_j + \phi_{j-1}) (s_j)=0,
\]
for all $j=1, \ldots, n$. Since $K_g >0$ along $\gamma$, it is not possible for $\partial_s \phi_j (s_j)$ to be equal to $0$ since if this were the case then the functions $\psi_j$ on $(s_{j-1}, s_j]$ and $\psi_{j+1}$ on $[s_j, s_{j+1})$ can be glued together to provide a solution of $\mathfrak J_\gamma \psi =0$ on $(s_{j-1}, s_{j+1}$ which has a positive local minimum at $s_j$, which is a contradiction.
This proves that $\Phi$ is a Bouncing Jacobi Field.
\end{proof}

To proceed, let us give a necessary condition for the existence of a Bouncing Jacobi Field.
\begin{lemma}
\label{le:2.6}
Assume that
\[
|\gamma|^2 \, \inf_\gamma K_g > \pi^2\, n^2.
\]
Then $\gamma$ does not carry any Bouncing Jacobi Field with $n$ minimums.
\end{lemma}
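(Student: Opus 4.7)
The plan is to extract from any putative Bouncing Jacobi Field $\Phi$ an upper bound on the length of each of its smooth arcs, sum these bounds, and contradict the hypothesis $|\gamma|^{2}\inf_{\gamma}K_{g}>\pi^{2}n^{2}$. Set $c:=\sqrt{\inf_{\gamma}K_{g}}>0$ and $\ell_{j}:=s_{j+1}-s_{j}$, so that $\sum_{j=1}^{n}\ell_{j}=|\gamma|$. Writing $\phi_{j}:=\Phi|_{(s_{j},s_{j+1})}$, Definition~\ref{de:1.3} gives a smooth positive solution of $\phi_{j}''+K_{g}\phi_{j}=0$ on $(s_{j},s_{j+1})$ with $\phi_{j}(s_{j})=\phi_{j}(s_{j+1})=1$ and $\phi_{j}>1$ on the interior.

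The core step is to show that each arc satisfies $\ell_{j}<\pi/c$. I would do this by Sturm comparison: take the reference function $v(s):=\sin\!\bigl(c(s-s_{j})\bigr)$, which solves $v''+c^{2}v=0$ and has consecutive zeros exactly at $s_{j}$ and $s_{j}+\pi/c$. Since $K_{g}\geq c^{2}$ on $\gamma$, Sturm's classical comparison theorem says that between consecutive zeros of $v$ the function $\phi_{j}$ must have a zero, unless $K_{g}\equiv c^{2}$ on $(s_{j},s_{j}+\pi/c)$ and $\phi_{j}$ is a scalar multiple of $v$. The second alternative is impossible since $v(s_{j})=0$ while $\phi_{j}(s_{j})=1$. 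Hence $\phi_{j}$ must vanish in $(s_{j},s_{j}+\pi/c)$; but $\phi_{j}>0$ on $[s_{j},s_{j+1}]$, so this forced zero cannot lie inside $[s_{j},s_{j+1}]$, giving $\ell_{j}<\pi/c$.

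Summing over $j=1,\ldots,n$ yields
\[
|\gamma|\;=\;\sum_{j=1}^{n}\ell_{j}\;<\;\frac{n\pi}{c}\;=\;\frac{n\pi}{\sqrt{\inf_{\gamma}K_{g}}},
\]
which is equivalent to $|\gamma|^{2}\inf_{\gamma}K_{g}<\pi^{2}n^{2}$, contradicting the hypothesis. Therefore no Bouncing Jacobi Field with $n$ minimums can exist.

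The argument is entirely elementary and the only subtle point is ruling out the degenerate equality case in Sturm's theorem, which is handled by noting that $\phi_{j}$ takes the value $1$ (not $0$) at the endpoint $s_{j}$. An equivalent formulation, if one prefers to avoid citing Sturm, is to observe that the existence of a positive solution of $\phi_{j}''+K_{g}\phi_{j}=0$ on $[s_{j},s_{j+1}]$ with $\phi_{j}>1$ on the interior forces the first Dirichlet eigenvalue of $-\partial_{s}^{2}-K_{g}$ on $(s_{j},s_{j+1})$ to be strictly positive (test against the positive principal eigenfunction after writing $\phi_{j}=1+w_{j}$ with $w_{j}\in H_{0}^{1}$), and then compare this eigenvalue from above by $\pi^{2}/\ell_{j}^{2}-\inf_{\gamma}K_{g}$ via the Rayleigh quotient with $\sin(\pi(s-s_{j})/\ell_{j})$; this again yields $\ell_{j}<\pi/\sqrt{\inf_{\gamma}K_{g}}$ and the same conclusion after summation.
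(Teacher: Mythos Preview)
Your proof is correct and rests on the same Sturm-comparison idea as the paper's. The only organizational difference is that the paper applies pigeonhole to find a single interval of length at least $|\gamma|/n>\pi/\sqrt{\inf_\gamma K_g}$ and then runs a hands-on touching argument (choosing $c>0$ so that $\Phi-c\,\eta$ has an interior minimum, where $\eta$ is a sine solution of $\eta''+(\inf K_g)\eta=0$ vanishing at two interior points), whereas you invoke Sturm directly on every interval to get $\ell_j<\pi/\sqrt{\inf_\gamma K_g}$ and then sum; these are logically equivalent formulations of the same comparison.
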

\begin{proof}
We argue by contradiction and assume that there was a Bouncing Jacobi Field $\Phi$ with $n$ consecutive local minimums at  $s_1 < \ldots < s_n$. We can find an index $j$ such that
\[
n \, \sqrt{\inf K_g} \, |s_{j+1}-s_j|\geq|\gamma| \sqrt{\inf K_g} > n \, \pi.
\]
It follows that the equation
\[
\left( \partial_s^2 + \inf K_g \right) \,\eta =0,
\]
has a nontrivial solution $\eta$ which vanishes at two distinct points $\overline s < \underline s$ in the interval $(s_j, s_{j+1})$. Now,
\[
\mathfrak J_\gamma \Phi =0,
\]
in $(s_j, s_{j+1})$, with $\Phi (s_j) = \Phi (s_{j+1}) = 1$.
Since $\Phi >1$ in $(\underline s, \overline s)$, one can choose a constant $c >0$ such that  $\Phi - c \, \eta$ has a local minimum in $(\underline s, \overline s)$.
This implies that $\Phi - c \, \eta \equiv 0$ and this is a contradiction since this function is not zero at $\overline s$ and $\underline s$.
\end{proof}

The proof of Theorem \ref{th:2.2} is now complete.
This also completes the proof of Theorem \ref{th:1.4} mentioned in the first section.
\end{proof}

\begin{remark}
\label{re:2.7}
In the special case where $n=1$ and where (\ref{eq:2.3}) holds, there exist at least two distinct Bouncing Jacobi Fields with one local minimum since, in this case, $s \mapsto \mathcal H_1(s)$ is a smooth periodic function which has at least one minimum and one maximum.
\end{remark}
\section{The index of Bouncing Jacobi Fields}
Observe that, in the case where the Gauss curvature is constant along $\gamma$ and if $\gamma$ is assumed to be non-degenerate, then inequality $\left(\ref{th:1.5}\right)$ tells us that $n$, the number of minimum of a Bouncing Jacobi Field, satisfies
\[
n \geq \text{Ind}\left(\gamma\right),
\]
where we recall that $\text{Ind}\left(\gamma\right)$ denotes the Morse index of the geodesic $\gamma$.

\medskip

In the case where the Gauss curvature is not constant, the relation between $n$ and the index of the geodesic is not clear.
In this section, we will analyze the properties of the index of the Bouncing Jacobi Field.
These properties will be useful in  our later analysis of the Jacobi-Toda system.
As a by product,  in  Corollary~\ref{co:5.6}, we will also prove a weaker lower bound for $n$ in terms of the index of $\gamma$, which has independent interest.

\medskip

To proceed, we will now compute the the second variation of $\mathcal H_n$ at a critical point.

\begin{proposition}
\label{pr:3.1}
Assume that $\Phi$ is Bouncing Jacobi Field with $n$ minimums at $0\leq s_1< \ldots < s_n < |\gamma|$. As usual, we agree that $s_{n+1}:= s_1 +|\gamma|$ and $s_{0} = s_{n} - |\gamma|$ and we define
\[
N_j := \partial_s \Phi (s_j^+)- \partial_s \Phi (s_j^-).
\]
Then, the following expansion holds
\[
\begin{array}{lllll}
\displaystyle {\mathcal H}_n \left( s_1+ \frac{\sigma_1}{N_1}, \ldots , s_n +\frac{\sigma_n}{N_n}\right) = {\mathcal H}_n (s_1, \ldots , s_n )   \\[3mm]
\hspace{25mm} \displaystyle  - \frac{1}{4} \sum_{j=1}^n {\bigg\{} \left( \partial_s \eta (s_{j}^-) + \partial_s \eta (s_j^+) \right) \, \sigma_j + \frac{4 \, K_g(s_j) }{N_j} \, \sigma_j^2 {\bigg\}} + \mathcal O(\sigma_j^3),
\end{array}
\]
or equivalently
\[
\begin{array}{lllll}
\displaystyle \mathcal H_n \left( s_1+ \frac{\sigma_1}{N_1}, \ldots , s_n +\frac{\sigma_n}{N_n}\right) = {\mathcal H}_n (s_1, \ldots , s_n )    \\[3mm]
\hspace{20mm} \displaystyle  + \frac{1}{4} \sum_{j=1}^n {\bigg\{} \int_{s_j}^{s_{j+1}} \left( (\partial_s  \eta)^2 - K_g \, \eta^2\right) \, ds - \frac{4 \, K_g(s_j) }{N_j} \, \sigma_j^2 {\bigg\}} +\displaystyle \sum_{j=1}^n \mathcal O(\sigma_j^3),
\end{array}
\]
where the function $\eta : \gamma -\{s_1, \ldots, s_n\} \to \R$ is the (unique) solution of
\[
\left\{
\begin{array}{rlllll}
\mathfrak J_\gamma  \eta & =& 0 \qquad \text{on} \qquad \gamma -\{s_1, \ldots, s_n\},
\\[3mm]
\eta (s_j^+) = - \eta (s_j^-) & =&  \sigma_{j} \qquad \text{for all} \quad j=1, \ldots, n.
\end{array}
\right.
\]
\end{proposition}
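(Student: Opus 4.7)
The plan is to push the expansion derived in the proof of Proposition~2.5 one order further. Since $(s_1,\ldots,s_n)$ is a critical point, the linear-in-$\dot s$ terms already vanish (that was exactly how the criticality condition $\partial_s(\phi_j+\phi_{j-1})(s_j)=0$ was extracted), and the quadratic contributions become the leading nontrivial part.

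Concretely, I would restart from the intermediate expression in the proof of Proposition~2.5 immediately preceding (2.6), impose $\partial_s(\phi_j+\phi_{j-1})(s_j)=0$ so that $\partial_s(\phi_j-\phi_{j-1})(s_j)=N_j$, and rescale via $\dot s_j = \sigma_j/N_j$. The relations (2.4)-(2.5) then give $\delta_j^+ - \delta_j^- = -\sigma_j + \mathcal O(\sigma^2)$ and $\delta_j^+ + \delta_j^- = \mathcal O(\sigma^2)$. The piecewise Jacobi field obtained by patching the local corrections $\eta_j$ (carrying boundary data $\delta_j^+$ at $s_j$ and $\delta_{j+1}^-$ at $s_{j+1}$) agrees, up to the fixed multiplicative factor forced by the relation $\sigma_j = -\tfrac12 N_j\dot s_j + \mathcal O(\sigma^2)$, with the auxiliary field $\eta$ of the statement; this is the key identification that converts the implicit $\eta_j$ data into the explicit jump data $\eta(s_j^+)=-\eta(s_j^-)=\sigma_j$.

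The coefficient $-K_g(s_j)/N_j$ of $\sigma_j^2$ then assembles from three sources: the direct quadratic term $-K_g(s_j)\,\partial_s(\phi_j-\phi_{j-1})(s_j)\,\dot s_j^2/2$ in (2.6); the cross term $-K_g(s_j)(\delta_j^+-\delta_j^-)\dot s_j$ with $\delta_j^+-\delta_j^- \sim -\sigma_j$; and the $K_g(s_j)\dot s_j^2$ piece of $\delta_j^++\delta_j^-$ from (2.5), which feeds into the $\tfrac12 N_j(\delta_j^++\delta_j^-)$ contribution. The coefficient of $(\partial_s\eta(s_j^-)+\partial_s\eta(s_j^+))\sigma_j$ arises solely from the $(\partial_s\eta_j+\partial_s\eta_{j-1})(s_j)\dot s_j$ piece of $\delta_j^++\delta_j^-$ in (2.5); translating from the normalization of the $\eta_j$'s to that of $\eta$ yields the overall factor $-\tfrac14$.

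Finally, the equivalence of the two displayed forms follows from integration by parts on each $(s_j,s_{j+1})$: since $\mathfrak J_\gamma\eta = 0$ there,
\[
\int_{s_j}^{s_{j+1}}\bigl((\partial_s\eta)^2 - K_g\,\eta^2\bigr)\,ds \;=\; \bigl[\eta\,\partial_s\eta\bigr]_{s_j^+}^{s_{j+1}^-},
\]
and summing over $j$, using $\eta(s_j^+)=-\eta(s_j^-)=\sigma_j$, converts the total boundary sum into $-\sum_j\bigl(\partial_s\eta(s_j^-)+\partial_s\eta(s_j^+)\bigr)\sigma_j$. The main obstacle is the careful sign and normalization bookkeeping required to translate between the local corrections $\eta_j$ (implicitly determined by the shifts $\dot s_j$) and the globally-defined piecewise Jacobi field $\eta$ (prescribed by the jumps $\sigma_j$), and to isolate the genuine second-order contributions hidden in the $\mathcal O(\dot s^2)$ remainders of (2.4)-(2.5) that feed back into the quadratic terms through the $\tfrac12 N_j(\delta_j^++\delta_j^-)$ coefficient.
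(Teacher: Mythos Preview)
Your approach is essentially the same as the paper's: both start from the expansion (2.6) (or the line just before it) derived in Proposition~2.5, impose the criticality condition $\partial_s(\phi_j+\phi_{j-1})(s_j)=0$ to kill the first-order terms, use (2.4)--(2.5) to leading order, and then integrate by parts for the alternative form. The paper packages the translation slightly differently---it introduces rescaled auxiliary fields $\dot\eta_j$ with boundary data $\dot s_j$ at $s_j$ and $\dot s_{j+1}$ at $s_{j+1}$, which makes the factor $N_j^2$ appear explicitly and then disappear upon substituting $\dot s_j=\sigma_j/N_j$---but this is the same computation reorganized.

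One small slip in your write-up: you set $\dot s_j=\sigma_j/N_j$ early on, but later refer to ``the relation $\sigma_j=-\tfrac12 N_j\dot s_j+\mathcal O(\sigma^2)$'', which is inconsistent by a factor of $-2$. The correct leading-order identification is $\delta_j^\pm = \mp\tfrac12\sigma_j+\mathcal O(\sigma^2)$ (from (2.4) with $\dot s_j=\sigma_j/N_j$, together with $\delta_j^++\delta_j^-=\mathcal O(\sigma^2)$), so that the restriction of $\eta$ to $(s_j,s_{j+1})$ equals $-2\eta_j$ to leading order; this is what produces the overall $\tfrac14$. Once you fix that normalization, your bookkeeping lines up with the paper's.
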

\begin{proof}
We keep the notations of the proof of Proposition~\ref{pr:2.5} and start with (\ref{eq:2.6}) which we specialize to the case where $\Phi$ is Bouncing Jacobi Field. In this case, the formula simplifies and we get
\[
\begin{array}{llll}
\mathcal H_n (s^*_1, \ldots, s^*_n) &= & \displaystyle\mathcal H_n (s_1, \ldots, s_n)  \\[3mm]
& - & \displaystyle \sum_{j=1}^n  \partial_s (\phi_{j} - \phi_{j-1})  (s_j) \, \partial_s (\eta_{j} + \eta_{j-1})(s_j)\, \frac{\dot s_j^2}{2} \\[3mm]
 & + & \displaystyle
 \sum_{j=1}^n K_g(s_j) \, \partial_s( \phi_j- \phi_{j-1} )(s_j)\, \frac{\dot s_j^2}{2} + \mathcal O(\dot s^3).
\end{array}
\]
where we recall that $\dot s_j := s^*_j-s_j$. Using the fact that
\[
\delta_j^\pm = \pm \frac{N_j}{2} \dot s_j + \mathcal O (\dot s^2),
\]
where we recall that
\[
N_j := \partial_s (\phi_j -\phi_{j-1})(s_j),
\]
this can be rewritten as
\[
\begin{array}{lllll}
\mathcal H (s^*_1,\ldots,  s_n^*)  & =&  \mathcal H (s_1,\ldots,  s_n) \\[3mm]
& - & \displaystyle \frac{1}{4} \, \sum_{j=1}^n  N_j^2 \, \bigg\{\partial_s (\dot \eta_j + \dot \eta_{j-1})(s_j) \, \dot s_j + \frac{4 \, K_g(s_j) }{N_j} \dot s_j^2 \bigg\} + \sum_{j=1}^n \mathcal O(\dot s_j^3),
\end{array}
\]
where $\dot \eta_j$ is the solution of $\mathfrak J_\gamma    \dot \eta_j =0$ on $(s_j, s_{j+1})$, with boundary data $\dot s_j$ at $s_j$ and $\dot s_{j+1}$ at $s_{j+1}$.

\medskip

Since we also have
\[
\int_{s_j}^{s_{j+1}} \left( (\partial_s \dot \eta_j)^2-K_g\, \dot \eta_j^2\right)\, ds = - \left( \partial_s\eta_j(s_{j+1}) \, \dot s_{j+1} + \partial_s\eta_j (s_j) \, \dot s_j \right),
\]
we get the alternative formula
\[
\begin{array}{llll}
\mathcal H (s_1^*, \ldots, s_n^*) =  \mathcal H (s_1,\ldots,  s_n) \\[3mm]
\hspace{21mm}\, + \displaystyle \frac{1}{4} \sum_{j=1}^n N_j^2 \, \bigg\{\int_{s_j}^{s_{j+1}} \left((\partial_s \dot \eta_j)^2 - K_g (\dot \eta_j)^2\right) \, ds - \frac{4 \, K_g(s_j) }{N_j} \dot s_j^2 \bigg\} + \mathcal O(\dot s_j^3).
\end{array}
\]

This completes the proof.
\end{proof}

We now introduce the definition of a non-degenerate Bouncing Jacobi Field.

\begin{definition}
\label{de:3.2}
A Bouncing Jacobi Field $\Phi$ with $n$ minimums at the points $s_1 < \ldots< s_n$ (being understood that $s_{n+1}:=s_1+|\gamma|$ and $s_0:=s_n-|\gamma|$) is said to be non-degenerate, if the following system does not have any non-trivial solution:
\[
\left\{
\begin{array}{rllll}
\mathfrak J_\gamma    \eta & = & 0 \quad \text{in} \quad  \gamma -\{s_1, \ldots, s_n\},\\[3mm]
\eta (s^+_j) +\eta (s^-_j) & = & 0 \quad \text{for all} \quad   j=1, \ldots, n ,\\[3mm]
\displaystyle \partial_s \eta (s_j^+) + \partial_s \eta (s_{j}^-) + \displaystyle 2 K_g \, \frac{\eta (s_j^+) - \eta (s_j^-)}{N_j}& =& 0 \quad \text{for all} \quad j= 1, \ldots, n,
\end{array}
\right.
\]
where $N_j:= \partial_s \Phi (s_j^+)-\partial_s \Phi (s_j^-)$.
\end{definition}
Observe that the operator which appears in this definition also appears in the second variation of the functional $\mathcal H_n$ which is given in Proposition~\ref{pr:3.1}.

\medskip

The non-degeneracy condition will now be used to prove that we can embed the Bouncing Jacobi Field $\Phi$ as a $2n$ dimensional family of continuous, piecewise $\mathcal C^1$ Jacobi Fields  whose $n$ local minimum have a prescribed value close to $1$ and have the property that the sum of the right and left derivatives at these points can also be prescribed to some value close to $0$.
More precisely, we have the:
\begin{proposition}
\label{pr:3.3}
Assume that $\Phi$ is a non-degenerate Bouncing Jacobi Field with $n$ minimums at the points $s_1, \ldots, s_n \in \gamma$.
Then, for all $\delta_1, \ldots, \delta_n \in \R$ and $\theta_1, \ldots , \theta_n \in \R$ close enough to $0$, there exists $\tilde s_1, \ldots, \tilde s_{n}$ in $\gamma$ and $\tilde \Phi$ solution of:
\begin{equation}
\label{eq:tphi}
\left\{
\begin{array}{rllll}
   \mathfrak J_\gamma \tilde \Phi & = & 0 \quad \text{on} \quad \gamma - \{\tilde s_1, \ldots, \tilde s_{n}\} , \\[3mm]
    \tilde \Phi (\tilde s_j) & = & 1+\delta_j \quad \text{for} \quad  j=1, \ldots, n ,\\[3mm]
\partial_s \tilde \Phi (\tilde s_j^+) + \partial_s \tilde \Phi (\tilde s_j^-) & = & \theta_j \quad \text{for} \quad j=1, \ldots, n .
\end{array}
\right.
\end{equation}
Moreover, $\tilde \Phi$ depends smoothly on the $\delta_j$ and the $\theta_j$, and $\tilde \Phi =\Phi$ when $\delta_j=\theta_j = 0$ for $j=1,\ldots, n$. Finally, the points $\tilde s_1, \ldots, \tilde s_n$ and the function $\tilde \Phi$ depend smoothly on the parameters $\delta_1, \ldots, \delta_n \in \R$ and $\theta_1, \ldots , \theta_n \in \R$ (provided these are close enough to $0$).
\end{proposition}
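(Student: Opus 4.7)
The plan is to realize (\ref{eq:tphi}) as a consequence of the implicit function theorem in $\R^n$, where the $n$ unknowns are the positions $\tilde s_1, \ldots, \tilde s_n$ and the $2n$ real parameters are $\delta_1,\ldots,\delta_n$ and $\theta_1,\ldots,\theta_n$. First, for every choice of $\tilde s_1 < \ldots < \tilde s_n$ close to $(s_1, \ldots, s_n)$ and of $\delta_j$ close to $0$, I solve on each interval $(\tilde s_j, \tilde s_{j+1})$ the Dirichlet problem $\mathfrak J_\gamma \tilde \phi_j = 0$ with $\tilde \phi_j(\tilde s_j) = 1 + \delta_j$ and $\tilde \phi_j(\tilde s_{j+1}) = 1 + \delta_{j+1}$. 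Since $\phi_j > 0$ on the closed interval $[s_j,s_{j+1}]$, the homogeneous Dirichlet problem on this interval has only the trivial solution; this absence of conjugate points is open under small perturbations of the endpoints, so $\tilde \phi_j$ exists, is unique, and depends smoothly on $(\tilde s_j, \tilde s_{j+1}, \delta_j, \delta_{j+1})$. Gluing these pieces yields a function $\tilde\Phi$ which is continuous on $\gamma$ and smooth on $\gamma - \{\tilde s_1, \ldots, \tilde s_n\}$, and which satisfies the first two lines of (\ref{eq:tphi}) by construction.

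Next, I define $F: \R^n \times \R^n \times \R^n \to \R^n$ by
\[
F_j(\tilde s;\delta;\theta) := \partial_s \tilde\phi_j(\tilde s_j^+) + \partial_s \tilde\phi_{j-1}(\tilde s_j^-) - \theta_j.
\]
A zero of $F$ in the $\tilde s$-variable enforces the third line of (\ref{eq:tphi}). At the base point $\tilde s = (s_1,\ldots,s_n)$, $\delta = 0$, $\theta = 0$, the construction gives $\tilde\Phi = \Phi$ and $F = 0$, thanks to the Bouncing identity $\partial_s \Phi(s_j^+) + \partial_s \Phi(s_j^-) = 0$. It thus suffices to check that the $n \times n$ matrix $\partial_{\tilde s} F$ at this base point is invertible; the implicit function theorem then produces $\tilde s = \tilde s(\delta,\theta)$ depending smoothly on $(\delta,\theta)$ near $0$ with $\tilde s(0,0) = (s_1,\ldots,s_n)$, and feeding this into the Dirichlet solvers above gives the desired $\tilde\Phi$.

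To compute $\partial_{\tilde s} F$, let $\eta := \partial_\varepsilon \tilde\Phi|_{\varepsilon = 0}$ denote the variation associated to $\dot s_j := \partial_\varepsilon \tilde s_j|_{\varepsilon = 0}$ (at fixed $\delta = \theta = 0$). Differentiating the Dirichlet conditions $\tilde\phi_j(\tilde s_j) = 1 = \tilde\phi_{j-1}(\tilde s_j)$ at $\varepsilon = 0$ gives the jump relations
\[
\eta(s_j^+) = -\partial_s \Phi(s_j^+)\,\dot s_j = -\tfrac{N_j}{2}\,\dot s_j, \qquad \eta(s_j^-) = -\partial_s \Phi(s_j^-)\,\dot s_j = +\tfrac{N_j}{2}\,\dot s_j,
\]
where I use that $\partial_s \Phi(s_j^\pm) = \pm N_j/2$ as a consequence of the Bouncing identity. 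In particular $\eta(s_j^+) + \eta(s_j^-) = 0$ and $\dot s_j = -(\eta(s_j^+) - \eta(s_j^-))/N_j$, so the map $\dot s \mapsto \eta$ is a bijection onto the space of functions with $\mathfrak J_\gamma \eta = 0$ off the $s_j$'s and $\eta(s_j^+) + \eta(s_j^-) = 0$. Differentiating $F_j$ in $\varepsilon$ and using $\partial_s^2 \phi_j = -K_g \phi_j$ together with $\phi_j(s_j) = \phi_{j-1}(s_j) = 1$ produces
\[
\partial_\varepsilon F_j|_{\varepsilon=0} = \partial_s \eta(s_j^+) + \partial_s \eta(s_j^-) - 2 K_g(s_j)\,\dot s_j = \partial_s \eta(s_j^+) + \partial_s \eta(s_j^-) + 2 K_g(s_j)\,\frac{\eta(s_j^+) - \eta(s_j^-)}{N_j}.
\]
This is exactly the operator appearing in Definition~\ref{de:3.2}, so non-degeneracy of $\Phi$ says precisely that $\partial_{\tilde s} F$ is injective; since its source and target are both $n$-dimensional, injectivity is equivalent to invertibility, and the implicit function theorem applies.

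The main obstacle is the bookkeeping identification of $\partial_{\tilde s} F$ with the operator from Definition~\ref{de:3.2}: because $\eta$ is the variation of $\tilde\Phi$ at a fixed spatial point whereas the corners of $\tilde\Phi$ move with $\varepsilon$, the limit $\eta$ inherits nontrivial jumps at each $s_j$, and one must carefully eliminate $\dot s_j$ in favor of the jumps $\eta(s_j^\pm)$ in order to recognize the precise combination stipulated in the non-degeneracy condition. Once this is done cleanly, both the invertibility step and the smooth dependence of $\tilde\Phi$ on $(\delta,\theta)$ follow directly from the standard implicit function theorem applied to $F$.
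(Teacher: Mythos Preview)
Your proof is correct and follows essentially the same route as the paper: set up a finite-dimensional equation, compute its linearization at the base point, and recognize it as precisely the operator in Definition~\ref{de:3.2} so that non-degeneracy gives invertibility and the implicit function theorem applies. The only cosmetic difference is in the choice of unknowns: the paper parametrizes the family by the $2n$ boundary values $\delta_j^\pm$ of perturbations on the \emph{fixed} intervals $(s_j,s_{j+1})$ (then extends and evaluates at moving points $\tilde s_j$), whereas you parametrize directly by the $n$ endpoints $\tilde s_j$ and solve Dirichlet problems on the \emph{moving} intervals, which builds in the first two lines of~(\ref{eq:tphi}) automatically and leaves only $n$ scalar equations for the implicit function theorem; your version is slightly more economical but the two parametrizations are equivalent and the key linearization computation is the same.
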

\begin{proof}
Keeping the notation of the proof of Proposition~\ref{pr:2.5}, we can write
\[
\psi_j (s_j + s) =  1+ \delta_j^+ + \partial_s (\phi_j+\eta_j) (s_j)\, s - K_g(s_j) (1+\delta_j^+) \frac{s^2}{2} + \mathcal O(s^3),
\]
and
\[
\psi_{j-1} (s_j + s) =  1+ \delta_j^- + \partial_s (\phi_{j-1} +\eta_{j-1}) (s_j)\, s - K_g(s_j) (1+\delta_j^-) \frac{s^2}{2} + \mathcal O(s^3),
\]
Given the $\theta_j$ and the $\delta_j$, we determine $\tilde s_j$ close to $s_j$, and $\delta_j^\pm$ close to $0$ such that
\[
\psi_{j-1} (\tilde s_j) = \psi_{j-1} (\tilde s_j) =1+\delta_j,
\]
and
\[
\partial_s (\psi_j + \psi_{j-1})(\tilde s_j) = \theta_j.
\]
With little work, we get
\[
\psi_{j} (\tilde s_j) = \psi_{j-1} (\tilde s_j)  =  1 + \frac{\delta_j^+ + \delta_j^-}{2} + \sum_i \mathcal O (\delta_i^2 +\theta_i^2),
\]
\[
\partial_s (\phi_{j}- \phi_{j-1}) (s_j) \, (\tilde s_j -s_j) + \delta_j^+ - \delta_j^- = \sum_i \mathcal O (\delta_i^2 +\theta_i^2),
\]
and
\[
\partial_s \left( \psi_j + \psi_{j-1} \right) (\tilde s_j) = \displaystyle \partial_s (\eta_j +  \eta_{j-1}) (s_j) - 2 K_g(s_j) \, (\tilde s_j-s_j) \displaystyle +   \sum_i \mathcal O(\delta_i^2 + \theta_i^2).
\]
So our equations become
\[
\delta_j^+ + \delta_j^- = 2\, \delta_j+ \sum_i \mathcal O (\delta_i^2 +\theta_i^2),
\]
and
\[
\displaystyle \partial_s (\eta_j +  \eta_{j-1}) (s_j) + 2 \, \frac{K_g(s_j)}{\partial_s (\phi_j- \phi_{j-1})(s_j)} (\delta_j^+ - \delta_j^-) = \theta_j + \displaystyle   \sum_i \mathcal O(\delta_i^2 + \theta_i^2).
\]

The non-degeneracy of the Bouncing Jacobi Field ensures that the linear map
\[
(\delta^-_j, \delta^+_j)_{j=1, \ldots, n} \mapsto \left(\delta^+_j + \delta^-_j, \partial_s (\eta_j + \eta_{j-1} ) (s_j) +  K_g(s_j) \, \displaystyle \frac{\delta^+_j-\delta^-_j}{N_j} \right)_{j=1, \ldots, n},
\]
defined from $(\R^2)^n$ into $(\R^2)^n$, is one-to-one and hence, it is invertible. Recall that the functions $\eta_j$ solve $\mathfrak J_\gamma \eta_j =0$ in $(s_j, s_{j+1})$ with $\eta_j(s_j) = \delta_j^+$ and $\eta_j(s_{j+1})=\delta^-_{j+1}$ and hence they depend linearly on the $\delta^\pm_j$.

\medskip

The inverse function Theorem can be applied to determine the $\delta^\pm_j$ as functions of the $\delta_i$ and $\theta_i$ provided these are close to $0$. This completes the proof of the existence of $\tilde \Phi$ solution of (\ref{eq:tphi}).
\end{proof}

There is a functional which is closely related to the second variation of $\mathcal H_n$ and which will be very useful when we will need to determine the Morse index of the solutions of the Jacobi-Toda system and then of the solutions of the Allen-Cahn equation.

\medskip

Given a Bouncing Jacobi Field $\Phi$ with $n$ minimums at $s_1 < \ldots < s_n$, we define
\begin{equation}
    \label{eq:3.2}
\begin{array}{lllll}
\mathcal Q (\eta ) := \displaystyle \sum_{j=1}^n {\bigg\{} \int_{s_j}^{s_{j+1}} \left( (\partial_s \eta )^2 - K_g \, \eta^2\right) \, ds - \frac{K_g(s_j) }{N_j} \, \left(\eta (s_j^+) - \eta (s_j^-)\right)^2 {\bigg\}},
\end{array}
\end{equation}
for all functions $\eta : \gamma-\{s_1, \ldots, s_n\} \to \R$ such that
$\eta_{|(s_j, s_{j+1})} \in H^1(s_j, s_{j+1})$ and  $\eta (s_{j}^+) + \eta_j(s_j^-) =0$ for all $j=1, \ldots, n$. As usual, $N_j := \partial_s \Phi (s_j^+)- \partial_s \Phi (s_j^-)$.
We now prove that the index of $\mathcal Q$ at $\Phi$ (which is the maximal dimension of the space of functions over which $\mathcal Q$ is negative definite), is equal to the index of the Bouncing Jacobi Field $\Phi$ as defined in the introduction.
\begin{proposition}
\label{pr:3.4}
Given a Bouncing Jacobi Field $\Phi$ with $n$ minimums at $s_1 < \ldots < s_n$, its index is equal to the index of $\mathcal Q$ at $\Phi$
\end{proposition}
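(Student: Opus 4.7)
The plan is to decompose the natural test space of $\mathcal{Q}$ and show that the quadratic form splits orthogonally into a positive-definite infinite-dimensional piece and an $n$-dimensional piece that reproduces the Hessian of $\mathcal{H}_n$. Let $\mathbb{H}$ be the space of admissible test functions (piecewise $H^1$ on $\gamma$ minus the jump set, with $\eta(s_j^+)+\eta(s_j^-)=0$ for each $j$). Write $\mathbb{H}=\mathbb{H}_0\oplus\mathbb{H}_{00}$, where $\mathbb{H}_0$ consists of those $\eta$ that solve $\mathfrak{J}_\gamma\eta=0$ on each interval $(s_j,s_{j+1})$, and $\mathbb{H}_{00}=\bigoplus_j H^1_0(s_j,s_{j+1})$. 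Note that $\mathbb{H}_0$ is $n$-dimensional, parameterized by $\sigma_j:=\eta(s_j^+)=-\eta(s_j^-)$, because the fact that $\Phi$ is strictly positive on each closed interval $[s_j,s_{j+1}]$ forces, via Sturm theory, the Dirichlet problem for $\mathfrak{J}_\gamma$ on $(s_j,s_{j+1})$ to have trivial kernel, so the Jacobi extension with prescribed boundary values exists and is unique.

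Next I would show the decomposition is $\mathcal{Q}$-orthogonal. If $\eta_0\in\mathbb{H}_0$ and $\eta_{00}\in\mathbb{H}_{00}$, the jump term in $\mathcal{Q}$ pairs to zero because $\eta_{00}$ vanishes at all $s_j^\pm$, while on each interval an integration by parts gives
\[
\int_{s_j}^{s_{j+1}}\!\bigl(\partial_s\eta_0\,\partial_s\eta_{00}-K_g\,\eta_0\eta_{00}\bigr)\,ds=-\int_{s_j}^{s_{j+1}}\!(\mathfrak{J}_\gamma\eta_0)\,\eta_{00}\,ds=0,
\]
with vanishing boundary contributions. Then I would show $\mathcal{Q}$ is strictly positive on $\mathbb{H}_{00}$: the existence of the positive solution $\phi_j$ to $\mathfrak{J}_\gamma\phi_j=0$ on $[s_j,s_{j+1}]$ (with $\phi_j>1$ in the interior and $\phi_j=1$ at the endpoints) lets us apply the standard Sturm/Picone argument to conclude that the first Dirichlet eigenvalue of $\mathfrak{J}_\gamma$ on $(s_j,s_{j+1})$ is strictly positive, hence $\int_{s_j}^{s_{j+1}}\!((\partial_s\psi)^2-K_g\psi^2)\,ds>0$ for every nonzero $\psi\in H^1_0(s_j,s_{j+1})$. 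Summing over $j$ and noting the jump correction in $\mathcal{Q}$ vanishes on $\mathbb{H}_{00}$ gives $\mathcal{Q}|_{\mathbb{H}_{00}}>0$.

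It remains to identify $\mathcal{Q}|_{\mathbb{H}_0}$ with (a positive multiple of) the Hessian of $\mathcal{H}_n$. Given $\eta\in\mathbb{H}_0$ with parameters $\sigma_j$, we have $\eta(s_j^+)-\eta(s_j^-)=2\sigma_j$, so the jump term of $\mathcal{Q}$ contributes $-\sum_j \frac{4K_g(s_j)}{N_j}\sigma_j^2$. Comparing with the second of the two equivalent expansions in Proposition~\ref{pr:3.1}, applied with $\dot s_j=\sigma_j/N_j$, we obtain
\[
\mathcal{H}_n\!\left(s_1+\tfrac{\sigma_1}{N_1},\ldots,s_n+\tfrac{\sigma_n}{N_n}\right)=\mathcal{H}_n(s_1,\ldots,s_n)+\tfrac{1}{4}\,\mathcal{Q}(\eta)+O(|\sigma|^3),
\]
so the Hessian of $\mathcal{H}_n$ at $(s_1,\ldots,s_n)$ is, up to the positive change of variables $\dot s_j\mapsto \sigma_j=N_j\dot s_j$ and a positive factor, precisely $\mathcal{Q}|_{\mathbb{H}_0}$. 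In particular its signature equals that of $\mathcal{Q}|_{\mathbb{H}_0}$.

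Putting the three steps together, the index of $\mathcal{Q}$ on $\mathbb{H}$ equals the index of its restriction to $\mathbb{H}_0$ (since $\mathbb{H}_{00}$ contributes no negative directions and is $\mathcal{Q}$-orthogonal to $\mathbb{H}_0$), which equals the number of negative eigenvalues of the Hessian of $\mathcal{H}_n$ at $(s_1,\ldots,s_n)$, which is by definition $\mathrm{Ind}(\Phi)$. The only genuine subtlety is the positivity of $\mathcal{Q}$ on each $H^1_0(s_j,s_{j+1})$; this is the only place where the hypothesis that $\Phi>1$ strictly between consecutive minimums (and hence extends as a positive Jacobi field) is essential.
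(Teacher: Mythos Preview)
Your proof is correct and follows essentially the same approach as the paper: the paper's spaces $E$ and $F$ are exactly your $\mathbb{H}_0$ and $\mathbb{H}_{00}$, and the paper invokes the same integration-by-parts orthogonality, the same positivity of $\mathfrak{J}_\gamma$ on each $H^1_0(s_j,s_{j+1})$, and the same appeal to Proposition~\ref{pr:3.1} to identify $\mathcal{Q}|_{\mathbb{H}_0}$ with the Hessian of $\mathcal{H}_n$. If anything, you spell out a couple of points (the Sturm argument for positivity, the explicit change of variables $\sigma_j=N_j\dot s_j$) that the paper leaves implicit.
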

\begin{proof}
Let $E$ be the vector space spanned by the functions $\xi^i : \gamma -\{s_1, \ldots, s_n\} \to \R$, for $i=1, \ldots, n$, where $\xi^i$ is a solutions of the Jacobi equation $\mathfrak J_\gamma    \xi^i =0$ on each $(s_j, s_{j+1})$ with boundary data
\[
\xi^i (s_j^-)=\xi_j(s_j^+) =0 \quad \text{ for all }  \quad j\neq i \quad \text{ and } \quad  \xi^i (s_i^+) = - \xi^i(s_i^-)=1.
\]
The existence of $\xi^i$ follows from the fact that, since we have a Bouncing Jacobi Field, the operator $\mathfrak J_\gamma$, restricted to $H^1_0(s_j, s_{j+1})$, is non-degenerate.
It follows from the alternative formula in Proposition~\ref{pr:3.1}, that the index of the quadratic form $\mathcal Q$, restricted to $E$, is equal to the index of $\mathcal H_n$ at the point $(s_1, \ldots, s_n)$.

\medskip

Next, we define $F$ to be the vector space of functions $\psi:  \gamma -\{s_1, \ldots, s_n\} \to \R$ such that $\psi_{|(s_j, s_{j+1})}\in H^1_0(s_j, s_{j+1})$, for all $j=1, \ldots, n$.
The existence of the Bouncing Jacobi Field $\Phi$ implies that the operator $\mathfrak J_\gamma$ has index $0$ and nullity $0$ on each $(s_j, s_{j+1})$), therefore, the restriction of the quadratic form $\mathcal Q$ to $F$ is positive definite.

\medskip

Now, observe that any $\eta : \gamma -\{s_1, \ldots, s_n\} \to \R$ with $\eta_{|(s_j, s_{j+1})} \in H^1(s_j, s_{j+1})$ and $\eta (s_{j}^+) + \eta (s_j^-) =0$, for $j=1, \ldots, n$, can be uniquely decomposed as
\[
\eta  = \xi+ \psi,
\]
where  $\xi \in E$ and $\psi  \in F$. Moreover, a simple integration by parts shows that
\[
\mathcal Q (\eta) = \mathcal Q (\xi) + \mathcal Q(\psi).
\]
The index of $\mathcal Q$ restricted to $E$ being equal to the index of   $\mathcal{H}_{n}$ and $\mathcal Q$ being positive definite on $F$, we conclude that the index of $\mathcal{Q}$ is equal to the index of $\mathcal{H}_{n}$.
\end{proof}
\section{The Jacobi-Toda equation}
With some existence results for Bouncing Jacobi Fields at hand, we prove in this section the existence of solutions $\Psi : \gamma \to \R$ to the so-called Jacobi-Toda equation:
\begin{equation}
    \label{eq:4.1}
\varepsilon^2 \, \mathfrak J_\gamma \Psi + \bar{c}^2\, e^{-2\Psi} = 0,
\end{equation}
where $\Psi : \gamma \to \R$ and $\bar c >0$ is a constant which will be defined later on.

\medskip

Actually we are interested in solutions of this equation whose Morse index is bounded uniformly in $\varepsilon$ since, as we will see later on, these solutions will hopefully lead to the construction of solutions of the Allen-Cahn equation which have $2$ transition layers converging to $\gamma$ and also have uniformly bounded Morse indices.
We will show that solutions of the Jacobi-Toda equation can be obtained by desingularizing a Bouncing Jacobi Fields using solutions of a Toda equation.

\medskip

For all $\varepsilon >0$, it will be convenient to define $\alpha_\varepsilon > 0$ to be the unique solution of
\begin{equation}
\label{eq:4.2}
\alpha_\varepsilon \, e^{\alpha_\varepsilon} = \frac{1}{\varepsilon}.
\end{equation}
It is an easy exercise to check that, as a function of $\varepsilon$, $\alpha_\varepsilon$ can be expanded as
\[
\alpha_\varepsilon = - \ln \varepsilon - \ln \, (- \ln \varepsilon) + \mathcal O\left(\frac{\ln \, (-\ln \varepsilon)}{\ln \varepsilon}\right),
\]
as $\varepsilon$ tends to $0$.

\medskip

In the following, we agree that $C$ are positive constants, which  may depend from one inequality to the next, but which can be fixed large enough independently of $\varepsilon$.
These constants only depend on the geometry of our problem, namely the geodesic, curvature of the surface along the geodesic and the Bouncing Jacobi Field we consider to start the construction.
Similarly, we agree that $c$ are positive constants which may also  depend from one inequality to the next, but can be fixed small enough independently of $\varepsilon$.
Again, the constants $c$ only depend on the geometry of our problem and and the Bouncing Jacobi Field we consider.

\medskip

Given a non-degenerate Bouncing Jacobi Field $\Phi$ with $n$ minimums at $s_1 < \ldots < s_n$, we first explain how to perturb $\alpha_\varepsilon \, \Phi$ into a $2n$-dimensional family of solutions of (\ref{eq:4.1}) which are defined away from small intervals centered at the $s_j$'s.

\medskip

Assume we are given $\delta_j \in \R$ and $\theta_j \in \R$ such that
\[
|\delta_j| + \alpha_\varepsilon \, |\theta_j| \leq \frac{C}{\alpha_\varepsilon},
\]
where $C >0$ is a constant fixed large enough independently of $\varepsilon$.

\medskip

Proposition~\ref{pr:3.3} provides a continuous function $\tilde \Phi = \tilde \Phi (\delta_1, \ldots, \delta_n, \theta_1, \ldots, \theta_n)$ which has $n$ minimums at points $\tilde s_1 < \ldots < \tilde s_n$ (each of which is a function of the $\delta_j$'s and the $\theta_j$'s), and which solves the equation $\mathfrak J_\gamma \, \tilde \Phi =0$ in each $(\tilde s_j, \tilde s_{j+1})$ with initial data given by
\[
\tilde \Phi (\tilde s_j) = 1 + \delta_j \qquad \text{and} \qquad  \partial_s \tilde \Phi (\tilde s_j^+)+ \partial_s \tilde \Phi (\tilde s_j^-) = \theta_j.
\]

Observe that the function $\tilde \Phi$ and the $\tilde s_j$'s depend (smoothly) on the parameters $\delta_j$ and $\theta_j$, for $j=1, \ldots, n$ and we have the estimate
\[
\sup_j |\tilde s_j - s_j| \leq \frac{C}{\alpha_ \varepsilon}.
\]
We are now going to perturb the function $\alpha_\varepsilon \, \tilde \Phi$ into a function $\Psi$ which solves the Jacobi-Toda equation (\ref{eq:4.1}), away from some small intervals of size $2C/\alpha_\varepsilon$, centered at the $\tilde s_j$'s. We are solving a second order ordinary differential equation and since we do not prescribe the boundary data of $\Psi$ at the ends of the intervals, we need to address the issue of the non-uniqueness of the solution.
This is the reason why, for each $j=1, \ldots, n$, we ask that
\[
\Psi (\hat s_j) =  \alpha_\varepsilon \, \tilde \Phi (\hat s_j) \qquad \text{and} \qquad \partial_s \Psi (\hat s_j)  = \alpha_\varepsilon \partial_s\tilde\Phi (\hat s_j),
\]
where $\hat s_j$ is the middle of the interval $(\tilde s_j, \tilde s_{j+1})$.
This choice might seem quite arbitrary and the sceptic reader might ask if this is a reasonable choice. Let us just recall that we are solving a second order ordinary differential equation, so the space of solutions is $2$-dimensional. But in our construction we have two parameters $\delta_j$ and $\theta_j$ which are free and can be used to parameterize, close to $\alpha_\varepsilon \, \Phi$, the set of all solutions to the equation.

\medskip

The existence of a solution $\Psi$ close to $\alpha_\varepsilon \, \tilde \Phi$ follows from a perturbation argument, the rational being that, away from small intervals centered at the $\tilde s_j$'s, the function $\alpha_\varepsilon \, \tilde \Phi$ is much larger than $\alpha_\varepsilon$ and hence the term $\bar c^2\, e^{- 2 \alpha_\varepsilon \, \tilde \Phi}$ in the equation is small enough to ensure the perturbation argument to work.

\medskip

To be more precise, we have the:
\begin{proposition}
\label{pr:4.1}
There exist constants $C >0$ and, for all $\varepsilon >0$ small enough, there exists a function $\psi = \psi (\varepsilon, \delta_1, \ldots , \delta_n, \theta_1, \ldots \theta_n)$ such that
\[
\Psi := \alpha_\varepsilon\, (\tilde \Phi +\psi),
\]
solves $\mathfrak J_\gamma \, \Psi =0$ in $(\tilde s_j + \frac{C}{\alpha_\varepsilon} , \tilde s_{j+1} - \frac{C}{\alpha_\varepsilon})$ with
\[
\Psi =  \alpha_\varepsilon\tilde \Phi  \qquad \text{and} \qquad  \partial_s \Psi = \alpha_\varepsilon \partial_s\tilde \Phi ,
\]
at $\hat s_j$, for all $j=1, \ldots, n$. Moreover, the function $\psi$ depends smoothly on the $\delta_j$'s and $\theta_j$'s and satisfies the estimates
\[
|\psi| + |\partial_{\delta_j} \psi| + |\partial_{\theta_j} \psi| \leq C \, e^{- \frac{\alpha_\varepsilon}{C} \, |s-\tilde s_j|} ,
\]
and
\[
|\partial_s \psi|  + |\partial_{\delta_j} \partial_s \psi| + |\partial_{\theta_j} \partial_s  \psi| \leq C \, \alpha_\varepsilon \, e^{- \frac{\alpha_\varepsilon}{C} \, |s-\tilde s_j|} .
\]
\end{proposition}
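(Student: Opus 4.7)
The plan is to recast this as a fixed-point problem for $\psi$ on each interval $I_j := [\tilde s_j + C/\alpha_\varepsilon,\, \tilde s_{j+1} - C/\alpha_\varepsilon]$ separately. Since $\mathfrak J_\gamma \tilde\Phi = 0$ on $(\tilde s_j, \tilde s_{j+1})$ and since the defining relation $\alpha_\varepsilon e^{\alpha_\varepsilon} = 1/\varepsilon$ yields $\varepsilon^{2} \alpha_\varepsilon^{2} = e^{-2\alpha_\varepsilon}$, substituting $\Psi = \alpha_\varepsilon(\tilde\Phi + \psi)$ transforms the Jacobi--Toda equation into
\begin{equation*}
\mathfrak J_\gamma \psi \; = \; -\bar{c}^{\,2}\,\alpha_\varepsilon\, e^{-2\alpha_\varepsilon(\tilde\Phi - 1)}\, e^{-2\alpha_\varepsilon \psi} \; =: \; F(s, \psi),
\end{equation*}
to be solved with homogeneous Cauchy data $\psi(\hat s_j) = \partial_s\psi(\hat s_j) = 0$. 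The quantitative core is the lower bound $\tilde\Phi(s) - 1 \geq c\, d_j(s)$, where $d_j(s) := \min(|s - \tilde s_j|, |s - \tilde s_{j+1}|)$; this follows because $\tilde\Phi$ has a corner at each $\tilde s_k$ with positive jump $N_k$ of the derivative, and by Proposition~\ref{pr:3.3} combined with the non-degeneracy of the Bouncing Jacobi Field these jumps stay uniformly bounded below for small $(\delta,\theta)$. Consequently $|F(s, 0)| \leq C\alpha_\varepsilon e^{-2c\alpha_\varepsilon d_j(s)}$, which is exponentially small on $I_j$.

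Next I would set up a contraction in the weighted Banach space $X_j$ of $C^{1}$ functions on $I_j$ normed by
\begin{equation*}
\|\psi\|_j := \sup_{s \in I_j} e^{\alpha_\varepsilon\, d_j(s)/C}\,\bigl(|\psi(s)| + \alpha_\varepsilon^{-1}|\partial_s\psi(s)|\bigr),
\end{equation*}
defining $T\psi$ as the solution of the linear Cauchy problem $\mathfrak J_\gamma (T\psi) = F(s,\psi)$, $T\psi(\hat s_j) = \partial_s(T\psi)(\hat s_j) = 0$, given by Duhamel's formula against the Wronskian kernel of two linearly independent Jacobi fields on $(\tilde s_j, \tilde s_{j+1})$. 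Since these Jacobi fields are bounded on the compact arc, integrating the source produces $|T\psi(s)| + \alpha_\varepsilon^{-1}|\partial_s T\psi(s)| \leq C\, e^{-2c\alpha_\varepsilon d_j(s)}$ for $\psi$ in the unit ball of $X_j$, so $T$ preserves that ball. The contraction property follows from the estimate $|F(s,\psi) - F(s,\psi')| \leq C\alpha_\varepsilon |F(s,0)|\,\|\psi - \psi'\|_j$ valid for $\psi, \psi'$ in the ball, which, after one Duhamel integration, yields a contraction constant of order $e^{-cC} + e^{-c\alpha_\varepsilon}$. The fixed point $\psi$ then satisfies the stated pointwise and derivative bounds directly from membership in the unit ball of $X_j$.

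For the smooth dependence on $(\delta,\theta)$ and the derivative estimates, I would apply the Implicit Function Theorem to $\Xi(\psi, \delta, \theta) := \psi - T_{\delta, \theta}\psi$ on $X_j$. Proposition~\ref{pr:3.3} supplies smooth dependence of $\tilde\Phi$, of the points $\tilde s_k$, and of the midpoints $\hat s_k$ on $(\delta, \theta)$; the partial Fr\'echet derivative $\partial_\psi \Xi$ equals the identity plus an operator of norm $O(e^{-cC})$, hence invertible by Neumann series. Differentiating the fixed-point equation in $\delta_j$ or $\theta_j$ produces a linear Cauchy problem whose right-hand side retains the same exponential profile as $F$, and re-running the Duhamel estimate in $X_j$ gives the claimed bounds on $\partial_{\delta_j}\psi$ and $\partial_{\theta_j}\psi$. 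The main obstacle is calibrating the weight $e^{\alpha_\varepsilon d_j(s)/C}$: the exponent must be slow enough for the Duhamel integral to preserve $X_j$ (the admissible rate is dictated by the corner slopes $N_k$ of $\tilde\Phi$), yet fast enough to absorb both the $\alpha_\varepsilon$ prefactor already present in the source and the further $O(\alpha_\varepsilon)$ loss incurred upon differentiating $e^{-2\alpha_\varepsilon \psi}$ with respect to $\psi$ — a competition that is resolved thanks to the separation between the fast scale $\alpha_\varepsilon \to \infty$ and the order-one geometric quantities $N_k$.
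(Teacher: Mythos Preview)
Your proposal is correct and follows essentially the same strategy as the paper: reduce to a fixed-point problem for $\psi$ via variation of constants with Cauchy data at $\hat s_j$, exploit the linear lower bound $\tilde\Phi-1\gtrsim d_j(s)$ to make the source $\alpha_\varepsilon e^{-2\alpha_\varepsilon(\tilde\Phi-1)}$ exponentially small, and close by a contraction. The only cosmetic difference is that the paper builds the Duhamel kernel explicitly from $\tilde\Phi$ itself (writing $\mathcal S(f)(s)=-\tilde\Phi(s)\int_{\hat s_j}^s \tilde\Phi^{-2}\int_{\hat s_j}^t \tilde\Phi f$), whereas you invoke an abstract pair of independent Jacobi fields and package the estimates in an explicit weighted norm; both lead to the same bounds.
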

\begin{proof}
If $f : \gamma-\{\tilde s_1, \ldots, \tilde s_n\} \to \R$, we define the operator $\mathcal S$  by
\[
\mathcal S (f) (s) := - \tilde \Phi (s) \int_{\hat s_j}^s  \frac{1}{(\tilde \Phi (t))^2} \left( \int_{\hat s_j}^t \tilde \Phi (t')\, f(t') \, dt' \, \right) \, dt ,
\]
for all $s \in (\tilde s_j, \tilde s_{j+1})$.
Since $\tilde \Phi$ is a Jacobi field which does not vanish on $(\tilde s_j, \tilde s_{j+1})$, the operator ${\mathcal S}$ is well defined and we have
\[
- (\partial_s^2 + K_g)\, {\mathcal S}(f) = f,
\]
on $\gamma-\{\tilde s_1, \ldots, \tilde s_n\}$ with ${\mathcal S}(f) (\hat s_j) =\partial_s {\mathcal S}(f) (\hat s_j) =0$.

\medskip

We can then rephrase the problem as finding $\psi$, solution of
\[
\psi = -\frac{1}{\alpha_{\varepsilon}} \mathcal S \left( \frac{\bar c^2}{\varepsilon^2} \, e^{-2 \alpha_\varepsilon \, (\tilde \Phi + \psi)}\right).
\]
It is a simple (but tedious) exercise to check that, there exists $C >0$ large enough, independent of $\varepsilon$, such that this nonlinear equation has a solution well defined on each $(\tilde s_j + \frac{C}{\alpha_\varepsilon}, \tilde s_{j+1} - \frac{C}{\alpha_\varepsilon})$.
The solution can be obtained applying a fixed point argument for contraction mappings. Moreover, using (\ref{pr:4.2}), we get the estimates
\[
\alpha_\varepsilon\, |\psi (s)| + | \partial_s \psi (s) |\leq C \,  \alpha_\varepsilon \, e^{-\frac{\alpha_\varepsilon}{C} \, |s-\tilde s_j|}.
\]
Similar estimates can be derived for the partial derivative of $\psi$ with respect to the parameters $\delta_j$ and $\theta_j$.
\end{proof}

We now turn to the second ingredient in our construction.
To start with, observe that the Toda equation
\begin{equation}
\varepsilon^2 \, \partial_s^2 T + \bar c^2\, e^{-2 T} =0,
\label{eq:4.3}
\end{equation}
where $T : \R \to \R$, can be explicitly integrated and all solutions are given by
\[
s \mapsto \ln \left( \cosh \left( \frac{\kappa}{\varepsilon} \, (s-\bar s) s \right)\right) -\ln \kappa + \ln \bar c,
\]
for some $\kappa >0$ and some $\bar s \in \R$.
Our next result states that one can perturb this function into a solution of
\begin{equation}
\label{eq:vkappa}
\varepsilon^2 \, \mathfrak J_\gamma   T + \bar c^2 \, e^{-2T} = 0
\end{equation}
on some small interval centered at $\bar s$, with initial conditions given by
\[
T(\bar s) = \partial_s T(\bar s) = 0.
\]
The rational being that in the regime where the term $\varepsilon^2 \, K_g T$ is small compared to the other terms, a perturbation argument will lead to a solution $v$ of (\ref{eq:vkappa}) close to $T_0$. Again, the initial conditions we impose might seem arbitrary but the interested reader will check that playing with the parameters $\kappa$ and translation $\bar s$, one can prescribe any small value for the function $T$ and its first derivative at a fixed point $\bar s$.

\medskip

We define
\[
T_0 (s) := \ln \left( \cosh \left( \frac{\kappa}{\varepsilon} \, (s-\bar s) s \right)\right) -\ln \kappa + \ln \bar c + K_g(\bar s) \, \ln \kappa \, \frac{(s-\bar s)^2}{2}.
\]
With this definition at hand, we have the:
\begin{proposition}
\label{pr:4.2}
For all $\varepsilon >0$ small enough, all $\bar s \in \R$ and all $\kappa >0$ satisfying
\[
\frac{1}{C}  \, \alpha_\varepsilon \leq \frac{\kappa}{\varepsilon} \leq  C \, \alpha_\varepsilon,
\]
there exists $c >0$ and a unique  $T = T (\varepsilon, \kappa, \bar s)$ solution of (\ref{eq:vkappa}) on $[\bar s - c, \bar s + c]$ which depends smoothly on $\varepsilon$, $\bar s$ and $\kappa$. Moreover
\[
|(T-T_0) (s)| + |\kappa \, \partial_\kappa (T-T_0) (s)|  + |\partial_{\bar s} (T-T_0) (s)| \leq  C \, \left(\alpha_\varepsilon \, |s-\bar s|^3+ \frac{|s-\bar s|}{\alpha_\varepsilon}\right) ,
\]
and
\[
|\partial_s (T-T_0)(s)| + |\kappa \, \partial_\kappa \partial_s (T-T_0) (s)|  + |\partial_{\bar s}\partial_s (T-T_0) (s)|\leq  C \, \left( \alpha_\varepsilon \, |s-\bar s|^2 + \frac{1}{\alpha_\varepsilon}\right),
\]
for all $\displaystyle  |s-\bar s| < c$.
\end{proposition}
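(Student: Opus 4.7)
My approach is a singular-perturbation/fixed-point argument, treating $T_0$ as an approximate solution and seeking the true solution as $T=T_0+\phi$ with the homogeneous initial conditions $\phi(\bar s)=\partial_s\phi(\bar s)=0$. The first step is to quantify the error
\[
\mathcal E(T_0) := \varepsilon^2(\partial_s^2 T_0 + K_g T_0) - \bar c^2 e^{-2T_0}.
\]
Writing $T_0 = T_{\mathrm t} + \tfrac{K_g(\bar s)\ln\kappa}{2}(s-\bar s)^2$ with $T_{\mathrm t}(s):=\ln\cosh(\tfrac{\kappa(s-\bar s)}{\varepsilon}) - \ln\kappa + \ln\bar c$ and using the explicit identity $\varepsilon^2\partial_s^2 T_{\mathrm t}=\bar c^2 e^{-2T_{\mathrm t}}$, the quadratic correction built into $T_0$ is engineered precisely so that its contribution $\varepsilon^2 K_g(\bar s)\ln\kappa$ to $\varepsilon^2\partial_s^2 T_0$ cancels the $s$-independent part of $\bar c^2(e^{-2T_0}-e^{-2T_{\mathrm t}})$ up to the scale $|s-\bar s|$. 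A Taylor expansion combined with the regime $\kappa/\varepsilon\sim\alpha_\varepsilon$ (which controls $|\ln\kappa|\lesssim\alpha_\varepsilon$) then yields the pointwise error bound $|\mathcal E(T_0)(s)|\lesssim \varepsilon^2(\alpha_\varepsilon|s-\bar s|+1/\alpha_\varepsilon)$ for $|s-\bar s|\leq c$.

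The perturbation $\phi$ then satisfies
\[
\varepsilon^2(\partial_s^2\phi + K_g\phi) + 2\bar c^2 e^{-2T_0}\phi = -\mathcal E(T_0) - \bar c^2 e^{-2T_0}\bigl(e^{-2\phi}-1+2\phi\bigr),
\]
with $\phi(\bar s)=\partial_s\phi(\bar s)=0$. I would invert the leading linear part using variation of parameters against the two explicit Jacobi fields of the unperturbed Toda linearization $L_0:=\varepsilon^2\partial_s^2 + 2\bar c^2 e^{-2T_{\mathrm t}}$: namely $\eta_1:=\partial_s T_{\mathrm t}$ (from translation invariance of the Toda family) and $\eta_2:=\kappa\partial_\kappa T_{\mathrm t}$ (from its scaling symmetry). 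A direct computation gives a constant Wronskian $W(\eta_1,\eta_2)=\kappa^2/\varepsilon^2$, so the Duhamel kernel for $L_0$ with homogeneous data at $\bar s$ is $(\eta_1(s)\eta_2(t)-\eta_2(s)\eta_1(t))/\kappa^2$. This converts the problem into a fixed-point equation $\phi=\mathcal N(\phi)$, smooth in $(\varepsilon,\kappa,\bar s)$. I would equip the space of $\mathcal C^1$ functions vanishing to first order at $\bar s$ with the weighted norm
\[
\|\phi\| := \sup_{|s-\bar s|\leq c}\frac{|\phi(s)|}{\alpha_\varepsilon|s-\bar s|^3 + |s-\bar s|/\alpha_\varepsilon} + \sup_{|s-\bar s|\leq c}\frac{|\partial_s\phi(s)|}{\alpha_\varepsilon|s-\bar s|^2 + 1/\alpha_\varepsilon},
\]
and verify that the size of $\mathcal E(T_0)$ combined with the explicit growth of $\eta_1,\eta_2$ reproduces exactly these weights after integration against the Duhamel kernel, while the quadratic nonlinearity is $O(\|\phi\|^2)$. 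A standard Banach contraction in a fixed-radius ball then delivers the unique $\phi$, yielding the pointwise estimates on $T-T_0$ and $\partial_s(T-T_0)$. Treating $K_g\phi$ on the left hand side as part of the right hand side is harmless since $\varepsilon^2 K_g$ is tiny compared to $2\bar c^2 e^{-2T_0}$ in the regime considered.

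Smooth dependence on $(\varepsilon,\kappa,\bar s)$ follows from the joint smoothness of $\mathcal N$. For the estimates on $\kappa\partial_\kappa(T-T_0)$ and $\partial_{\bar s}(T-T_0)$, I would differentiate the fixed-point equation in the corresponding parameter and re-run the contraction on the resulting linear inhomogeneous equation; the inhomogeneities are bounded by the already-established estimate on $\phi$ plus the explicit $\kappa$- and $\bar s$-derivatives of $T_0$ and of $\mathcal E(T_0)$, which preserve the appropriate orders of magnitude. The main obstacle I expect is the bookkeeping around the two distinct scales in $T_{\mathrm t}$: the spatial scale $\varepsilon/\kappa\sim 1/\alpha_\varepsilon$ and the amplitude scale $|\ln\kappa|\sim\alpha_\varepsilon$. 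One must carefully verify that $\partial_{\bar s}T_0 = -\eta_1 + (\text{controllable})$ and $\kappa\partial_\kappa T_0 = \eta_2 + (\text{controllable})$ do not disrupt the delicate cancellation that produced the bound on $\mathcal E(T_0)$, and that after integration against the Duhamel kernel the weights $\alpha_\varepsilon|s-\bar s|^3$ and $|s-\bar s|/\alpha_\varepsilon$ emerge with the precise powers of $\alpha_\varepsilon$ claimed—without any stray logarithmic loss.
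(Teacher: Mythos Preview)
Your strategy is sound and genuinely different from the paper's. The paper does \emph{not} invert the linearized Toda operator: it simply writes $T=T_0+S$ and recasts the problem as the integral equation
\[
S(s)=\int_{\bar s}^{s}\!\!\int_{\bar s}^{t}\Bigl(-K_g\,(T_0+S)-\tfrac{(\kappa/\varepsilon)^2}{\cosh^2(\kappa\cdot/\varepsilon)}\bigl(1-e^{-2S}\bigr)\Bigr)\,dt'\,dt,
\]
treating both the curvature term and the full Toda potential $(\kappa/\varepsilon)^2\operatorname{sech}^2$ as perturbations of $\partial_s^2$. Your route instead absorbs the large potential into the linear operator $L_0=\varepsilon^2\partial_s^2+2\bar c^2 e^{-2T_{\mathrm t}}$ and inverts it via the explicit kernel built from $\eta_1=\partial_s T_{\mathrm t}$ and $\eta_2=\kappa\partial_\kappa T_{\mathrm t}$. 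This is more work to set up but conceptually cleaner for the contraction, since only genuinely small quantities ($\varepsilon^2K_g\phi$ and the quadratic remainder) are treated as perturbations. It also dovetails nicely with the paper's later Morse-index analysis, where exactly these Jacobi fields reappear as $\varphi_j$ and $\psi_j$.

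One correction to your description of the error: the cancellation engineered by the quadratic term in $T_0$ is \emph{not} between $\varepsilon^2K_g(\bar s)\ln\kappa$ and the ``$s$-independent part'' of $\bar c^2(e^{-2T_0}-e^{-2T_{\mathrm t}})$ (the latter vanishes at $\bar s$). It is between $\varepsilon^2K_g(\bar s)\ln\kappa$ (from $\partial_s^2$ of the quadratic correction) and the $-\ln\kappa$ contribution inside $\varepsilon^2 K_g T_0$. The residue at $\bar s$ is therefore $\varepsilon^2 K_g(\bar s)\ln\bar c=O(\varepsilon^2)$, not $O(\varepsilon^2/\alpha_\varepsilon)$ as you claim. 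This does not damage the final estimates, since after integrating against your Duhamel kernel the $O(\varepsilon^2)$ piece produces $O(|s-\bar s|^2)$, and by AM--GM one has $|s-\bar s|^2\lesssim \alpha_\varepsilon|s-\bar s|^3+|s-\bar s|/\alpha_\varepsilon$. You should also be aware that the term $\kappa^2\operatorname{sech}^2\bigl(e^{-K_g(\bar s)\ln\kappa(s-\bar s)^2}-1\bigr)$ in $\mathcal E(T_0)$ is pointwise of size $\varepsilon^2\alpha_\varepsilon$ near $|s-\bar s|\sim 1/\alpha_\varepsilon$, larger than your stated bound; its $\operatorname{sech}^2$ localization is what saves the integrated estimate, so make sure your weighted-norm bookkeeping exploits that decay rather than just the pointwise size.
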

\begin{proof}
We write
\[
T := T_0 + S,
\]
where we hope that $S$ will be small. Inserting this into (\ref{eq:vkappa}) and using the fact that
\[
\bar c^2 \, e^{-2T_0} = \frac{\kappa^2}{(\cosh (\kappa s/\varepsilon))^2},
\]
we can rewrite the equation we want to solve as
\[
\partial_s^2 S = - K_g \, (T_0 +S)- \frac{(\kappa/\varepsilon)^2}{(\cosh (\kappa \cdot /\varepsilon))^2} \, (1- e^{-2S}).
\]
with $S(\bar s) = \partial_s S(\bar s) =0$. Hence
\begin{equation}
S = \int_{\bar s}^s \left( \int_{\bar s}^t \left( - K_g \, (T_0 +S)- \frac{(\kappa/\varepsilon)^2}{(\cosh (\kappa \cdot /\varepsilon))^2} \, (1- e^{-2S}) \right) \, dt \right) \, ds.
\label{eq:4.5}
\end{equation}
Observe that, on the right-hand-side, the term which does not depend on $S$ can be estimated by
\[
\left| \int_{\bar s}^s \left( \int_{\bar s}^t K_g \, T_0 \, dt \right)\, ds +  K_g (\bar s) \, \ln \kappa \,  \frac{(s-\bar s)^2}{2}  \right| \leq C\, \left( \alpha_\varepsilon \, |s-\bar s|^3  + \frac{|s-\bar s|}{\alpha_\varepsilon}\right),
\]
and its derivative is given by
\[
\left| \int_{\bar s}^s K_g \, T_0 \, dt + K_g (\bar s) \, \ln \kappa \,  \frac{(s-\bar s)^2}{2} \right| \leq C \, \left( \alpha_\varepsilon \, |s-\bar s|^2  + \frac{1}{\alpha_\varepsilon}\right).
\]
Once these estimates have been derived, it is easy exercise the existence of $S$ using a fixed-point argument building on the fact that, in (\ref{eq:4.5}), the linear terms in $S$
\[
\int_{\bar s}^s \left( \int_0^t  - K_g \, S \, dt \right) \,  ds,
\]
and the nonlinear terms
\[
\int_{\bar s}^s \left( \int_0^t \frac{(\kappa/\varepsilon)^2}{(\cosh (\kappa \cdot /\varepsilon))^2} \, (e^{-2S}- 1) \, dt \right) \,  ds,
\]
can be considered as perturbations provided $|s-\bar s| < c$  where $c >0$ is fixed small enough, independently of $\varepsilon$. We leave the details to the interested reader.
\end{proof}

As alluded above, the idea for constructing solutions of the Jacobi-Toda equation (\ref{eq:4.1}) is start with a non-degenerate Bouncing Jacobi Field $\Phi$ with minimums at points of coordinates $s_1 < \ldots < s_n$. We then perturb $\alpha_\varepsilon \, \Phi$ as we have done in Proposition~\ref{pr:4.1}.
This yields a solution $\Psi$ of (\ref{eq:4.1}) which is well defined away from small intervals of size $C/\alpha_\varepsilon$ centered at the $s_j$'s and which depends on the $2n$ parameters $\delta_j$ and $\theta_j$, for $j=1, \ldots, n$.
Next we apply the last Proposition using the parameters  $\kappa = \kappa_j$ and $\bar s = \bar s_j$ to define functions $T_j$ which solve (\ref{eq:4.1}) on some small interval centered at the $s_j$'s.
Our aim is to prove that one can choose the parameters $\delta_j$, $\theta_j$, $\kappa_j$ and $\bar s_j$ in such a way that $\Psi$ and $v_j$ coincide on the intervals where they are both defined.
Since all functions are solution of the same second order differential equation, it is enough to check that one can choose the $4n$ parameters so that the value and derivatives of the different functions agree at one point of the intervals where they are both defined and this will produce the  solution of the Jacobi-Toda equation we are looking for.

\medskip

To be more precise, we have the:
\begin{proposition}
\label{pr:4.3}
Suppose $\Phi$ is a non-degenerate Bouncing Jacobi Field with $n$ minimums at $s_1, \ldots, s_n$. Then, there exists a constant $C >0$ and, for all $\varepsilon >0$ small enough, there exists $\Psi_\varepsilon$ a solution of (\ref{eq:4.1}) obtained by patching together the solutions $T(\varepsilon, \kappa_j, \bar s_j)$ given by Proposition~\ref{pr:4.2} centered at points close to the $s_j$'s and the solution $\Psi (\varepsilon, \delta_j, \theta_j)$ given by Proposition~\ref{pr:4.1} for parameters satisfying
\[
\left|\frac{\kappa_j}{\varepsilon} - \alpha_\varepsilon \, \frac{N_j}{2} \right| +  \alpha_{\varepsilon}^{2}\,  \left|\delta_j + \frac{\ln (N_j/ \bar c)}{\alpha_\varepsilon}\right| + \alpha_{\varepsilon}^2\,  |\theta_j| + \alpha_\varepsilon \, |\bar s_j - s_j| \leq C.
\]
\end{proposition}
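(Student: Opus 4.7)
The plan is to determine the $4n$ parameters $(\delta_j,\theta_j,\kappa_j,\bar s_j)_{j=1,\dots,n}$ so that the piecewise function $\Psi = \Psi(\varepsilon,\delta,\theta)$ of Proposition~\ref{pr:4.1} and each local Toda profile $T_j := T(\varepsilon,\kappa_j,\bar s_j)$ of Proposition~\ref{pr:4.2} coincide on the overlaps of their domains. Since both satisfy the same second-order equation (\ref{eq:4.1}) on each overlap, by uniqueness of the Cauchy problem it suffices to match value and first derivative at one point on each side of each singular point $s_j$. Fix $\rho > 0$ independent of $\varepsilon$, strictly smaller than the constant $c$ of Proposition~\ref{pr:4.2}, and set $p_j^\pm := s_j \pm \rho$; for $\varepsilon$ small, these points lie in the domains of both $\Psi$ and $T_j$. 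The matching conditions assemble into a system
\[
F(\delta,\theta,\kappa,\bar s;\varepsilon) = 0 \in \R^{4n},
\]
of $4n$ equations in $4n$ unknowns.

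Next comes a leading-order analysis of $F$. On the Toda side, the asymptotic $\ln\cosh(x) = |x| - \ln 2 + O(e^{-2|x|})$, combined with Proposition~\ref{pr:4.2} and the quadratic curvature correction $K_g(\bar s_j)\ln\kappa_j\cdot(s-\bar s_j)^2/2$ built into $T_0$, gives
\[
T_j(p_j^\pm) = \tfrac{\kappa_j}{\varepsilon}\bigl(\rho \mp (\bar s_j - s_j)\bigr) - \ln\tfrac{2\kappa_j}{\bar c} + K_g(s_j)\ln\kappa_j \cdot \tfrac{\rho^2}{2} + \text{l.o.t.},
\]
and a parallel expansion for $\partial_s T_j(p_j^\pm)$. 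On the Jacobi side, the exponential decay of $\psi$ in Proposition~\ref{pr:4.1} yields $\Psi(p_j^\pm) = \alpha_\varepsilon\tilde\Phi(p_j^\pm) + O(e^{-c\alpha_\varepsilon})$, and Taylor expanding $\tilde\Phi$ around $s_j^\pm$ using Proposition~\ref{pr:3.3} (with $\partial_s\tilde\Phi(s_j^\pm) = (\theta_j \pm \tilde N_j)/2$, $\tilde N_j = N_j + O(\delta,\theta)$, and $\partial_s^2\tilde\Phi = -K_g\tilde\Phi$) gives an expansion in powers of $\rho$. A crucial cancellation occurs: the curvature term in $T_0$ cancels precisely against the second-order Taylor coefficient $-\alpha_\varepsilon K_g(s_j)\rho^2/2$ of $\alpha_\varepsilon\tilde\Phi$ (after using $\ln\kappa_j = -\alpha_\varepsilon + O(1)$, which comes from the identity $\ln(\varepsilon\alpha_\varepsilon) = -\alpha_\varepsilon$ in (\ref{eq:4.2})). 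Carrying through this cancellation, the derivative-matching equations reduce at leading order to $\kappa_j/\varepsilon = \alpha_\varepsilon N_j/2 + O(1)$, and the symmetric (resp.\ antisymmetric) part of the value matchings at $p_j^\pm$ reduces to $\alpha_\varepsilon\delta_j = -\ln(N_j/\bar c) + O(1/\alpha_\varepsilon)$ (resp.\ $\alpha_\varepsilon(\bar s_j - s_j) = O(1)$).

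To promote these leading-order identifications into an exact solution, introduce the rescaled parameters
\[
\hat\delta_j := \alpha_\varepsilon^2\Bigl(\delta_j + \tfrac{\ln(N_j/\bar c)}{\alpha_\varepsilon}\Bigr), \quad \hat\theta_j := \alpha_\varepsilon^2\theta_j, \quad \hat\kappa_j := \tfrac{\kappa_j}{\varepsilon} - \alpha_\varepsilon\tfrac{N_j}{2}, \quad \hat{\bar s}_j := \alpha_\varepsilon(\bar s_j - s_j),
\]
so that the target bounds of the proposition amount to $\max_j\bigl(|\hat\delta_j|+|\hat\theta_j|+|\hat\kappa_j|+|\hat{\bar s}_j|\bigr) \leq C$. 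In these variables the system reads $\hat F(\hat\delta,\hat\theta,\hat\kappa,\hat{\bar s};\varepsilon) = 0$, with $\hat F$ smooth and uniformly bounded on the closed ball of radius $C$, and with $\hat F(0;\varepsilon)$ tending to zero as $\varepsilon \to 0$. Existence of $\Psi_\varepsilon$ then follows from the implicit function theorem (or equivalently a contraction-mapping argument in the spirit of Propositions~\ref{pr:4.1} and~\ref{pr:4.2}) provided the linearization $D\hat F(0;\varepsilon)$ is uniformly invertible as $\varepsilon \to 0$.

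Establishing this uniform invertibility is the main obstacle. At leading order, $D\hat F$ decouples into a \emph{local block} on $(\hat\kappa_j,\hat{\bar s}_j)$, coming from the explicit linear dependence of $T_j$ on $(\kappa_j,\bar s_j)$ in the Toda asymptotic regime; it is block-diagonal over $j$ and manifestly non-degenerate. The second, \emph{global block} on $(\hat\delta_j,\hat\theta_j)$ is governed by the very linear map
\[
(\delta^+_j,\delta^-_j)_j \mapsto \Bigl(\delta^+_j + \delta^-_j,\ \partial_s(\eta_j + \eta_{j-1})(s_j) + 2K_g(s_j)\tfrac{\delta^+_j - \delta^-_j}{N_j}\Bigr)_j
\]
that appears in Definition~\ref{de:3.2} and in the proof of Proposition~\ref{pr:3.3}; its invertibility is precisely the assumption that $\Phi$ is a non-degenerate Bouncing Jacobi Field. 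The cross-coupling between the two blocks is of order $O(1/\alpha_\varepsilon)$, hence a small perturbation, so $D\hat F(0;\varepsilon)$ is uniformly invertible for $\varepsilon$ small, yielding the desired solution $\Psi_\varepsilon$ with the claimed parameter bounds.
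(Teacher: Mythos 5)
Your overall architecture matches the paper's: patch the global $\Psi$ from Proposition~\ref{pr:4.1} against the local Toda profiles $T_j$ from Proposition~\ref{pr:4.2}, reduce to a $4n\times 4n$ matching system by ODE uniqueness, and solve it by an implicit-function-theorem/contraction argument whose invertibility is driven, on one block, by the Toda dilation/translation parameters and, on the other, by the linear map encoding non-degeneracy of the Bouncing Jacobi Field. That decomposition of the linearization is right, and is in fact more explicit than what the paper writes.

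The gap is in your choice of matching point. You fix $\rho>0$ independent of $\varepsilon$ and match at $p_j^\pm = s_j\pm\rho$. But Proposition~\ref{pr:4.2} only gives $|T_j - T_0| \leq C(\alpha_\varepsilon|s-\bar s_j|^3 + |s-\bar s_j|/\alpha_\varepsilon)$, so at $|s-\bar s_j|=\rho$ fixed the Toda-side error is $O(\alpha_\varepsilon\rho^3)\to+\infty$. The relevant mismatch $\hat F(0;\varepsilon)$ is not controlled by the stated estimates, and your assertion that it tends to zero is unsupported at that scale. (There is in fact a genuine cancellation at order $\rho^3$ between $T_j-T_0$ and the cubic Taylor coefficient of $\alpha_\varepsilon\tilde\Phi$ — both come from iterating $\partial_s^2 = -K_g(\cdot)$ against the same linear leading profile — but exploiting it would require sharpening Proposition~\ref{pr:4.2}, not just citing its error bound.) The paper avoids this by matching at the $\varepsilon$-dependent scale $|s-s_j|=\alpha_\varepsilon^{-2/3}$, chosen so that both error bounds are simultaneously small: the Toda error becomes $O(\alpha_\varepsilon^{-1})$, which is below the accuracy $O(\alpha_\varepsilon^{-1})$ needed in the value of $\Psi$ to attain $|\delta_j+\ln(N_j/\bar c)/\alpha_\varepsilon|\leq C\alpha_\varepsilon^{-2}$, while the Jacobi-side error $O(e^{-\alpha_\varepsilon^{1/3}/C})$ is exponentially small. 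If you either move your matching points inward to $s_j\pm\alpha_\varepsilon^{-2/3}$, or refine the Toda error estimate to display the cubic cancellation, the rest of your argument — the rescaled variables, the leading-order identifications, and the block structure of $D\hat F$ — closes up correctly.
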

\begin{proof}
For all $j=1, \ldots, n$, we define $T_j$ to be the solution of (\ref{eq:4.1}) given by  Proposition~\ref{pr:4.2} when $\kappa = \kappa_j$ and $\bar s = \bar s_j$. This function can be expanded as
\[
T_j (s) = \frac{\kappa_j}{\varepsilon} \, |s-\bar s_j| - \ln \left(\frac{2 \, \kappa_j}{\bar c}\right)  + K_g (\bar s_j) \, \ln \kappa_j \, \frac{(s-\bar s_j)^2}{2} + \mathcal O \left( \alpha_\varepsilon \, (s-\bar s_j)^3\right),
\]
for $C/\alpha_\varepsilon \leq |s-\bar s_j|\leq c$.
\medskip

Next, we expand the solution $\Psi$ obtained in Proposition~\ref{pr:4.1} as
\[
\Psi (s) = \displaystyle \alpha_{\varepsilon} \, \left( 1 + \delta_j + \partial_+ \tilde \Phi (\tilde s_j) \, |s-\tilde s_j| - K_g (\tilde s_j)\, \frac{(s-\tilde  s_j)^2}{2}\right) + \mathcal O (e^{-\alpha_\varepsilon |s-\tilde s_j|/C}),
\]
for $|s - \tilde s_j| \geq  C/\alpha_\varepsilon$.

\medskip

We look for the parameters so that the following system of equation is fulfilled
\begin{equation}
    \label{eq:4.6}
\Psi (s) = T_j(s) \qquad \text{and} \qquad  \partial_s \Psi (s) = \partial_s T_j(s),
\end{equation}
for all $s$ satisfying
\[
\frac{C}{\alpha_\varepsilon} \leq |s-s_j| \leq c.
\]
Using the estimates we have obtained in the previous two Propositions, one can apply some fixed point argument (or the implicit function Theorem) to get the existence of parameters $\kappa_j$, $\bar s_j$, $\delta_j$ and $\theta_j$ for which (\ref{eq:4.6}) holds. The estimates in the statement follow from these equations evaluated at $s-s_j = \alpha_\varepsilon^{-2/3}$.
\end{proof}

\begin{remark}
\label{re:4.4}
In \cite{Manuel2}, the Jacobi-Toda equation
\[
\varepsilon^2 \, {\mathfrak J}_\gamma \Psi  + \bar c^2 \, e^{-2\Psi} =0
\]
also plays a very important role as it does in the present paper. To explain the difference between the two approaches, let us consider the case where the Gauss curvature is constant positive.

\medskip

In this case it is clear that there exists a constant solution of the Jacobi-Toda equation which can be expanded as
\[
\Psi = -\ln \varepsilon - \frac{1}{2} \, \ln \left( - \ln \varepsilon\right) + \mathcal O (1),
\]
In the case where the Gauss curvature is not constant, one can get an approximate solution of the Jacobi-Toda equation by using a fixed point iteration scheme
\[
\Psi_{k+1} = \frac{1}{K_g} \left( \frac{\bar c^2}{\varepsilon^2} \, e^{-2\Psi_k} - \partial_s^2 \Psi_k\right) .
\]
which, for $k$ large enough,  can be expanded as
\[
\Psi_k = -\ln \varepsilon - \frac{1}{2} \, \ln \left( - \ln \varepsilon\right) + \mathcal O (1),
\]
where, this time, $\mathcal O(1)$ depends on $s$.

\medskip

In contract, our solution of the Jacobi Toda equation is close to $\alpha_\varepsilon \, \Phi$ where $\Phi$ is a Bouncing Jacobi Field and where $\alpha_\varepsilon$ is determined by (\ref{eq:4.2}). \end{remark}

\section{The Morse index of solutions to the Jacobi-Toda equation}
In the previous section, starting from a non-degenerate Bouncing Jacobi Field $\Phi$, we have constructed a solution $\Psi_\varepsilon$ of the Jacobi-Toda equation (\ref{eq:4.1}), for all $\varepsilon >0$ small enough.

\medskip

Our aim is now to analyse the mapping properties and the spectrum of $\mathcal L_\varepsilon$, the linearized Jacobi-Toda equation about $\Psi_\varepsilon$
\[
\mathcal L_\varepsilon := \varepsilon^2 \, \mathfrak J_\gamma - 2\, \bar{c}^2 \, e^{-2\Psi_\varepsilon} .
\]
In particular, we are interested in the Morse index of the solution $\Psi_\varepsilon$.

\medskip

Since $\bar{c}^2 e^{-2\Psi_\varepsilon}>0,$ we know that the Morse index of the operator $\mathcal L_\varepsilon$ is bounded from below by the Morse index of the geodesic $\gamma$ which, by definition, is the Morse index of the Jacobi operator $\mathfrak J_\gamma$ also defined on $H^1\left(\gamma \right)$. Therefore, we have
\[
\text{Ind} (\Psi_\varepsilon ) \geq \text{Ind} (\gamma).
\]

We will prove that, for all $\varepsilon >0$ close enough to $0$, the operator $\mathcal L_\varepsilon$ is injective and that it has Morse index equal to $n + \text{Ind} \, (\Phi)$, where $n$ is the number of mimimums of the Bouncing Jacobi Field $\Phi$ we have used to construct $\Psi_\varepsilon$ and $\text{Ind} (\Phi)$ is the Morse index of $\Phi$ as defined in the introduction which agrees with the Morse index of the Hessian of $\mathcal H_n$.
The rational behind this result is that the negative eigenvalues of $\mathcal L_\varepsilon$ arise either from negative directions of  deformation of the critical point with respect to the functional $\mathcal{H}_n$, which in turn correspond to negative eigenvalues of the quadratic form $\mathcal{Q}$ which has been defined in (\ref{eq:3.2}), or they arise from the fact that, close to the points where  $\Phi$ is minimal, $\Psi_\varepsilon$ is close to a solution of the Toda equation whose linearized operator has one negative eigenvalue and, since we have used $n$ copies of a solution of the Toda equation to construct $\Psi_\varepsilon$, this contributes to $n$ additional negative eigenvalues of $\mathcal L_\varepsilon$.

\medskip

As a warm up, let us analyse the spectrum of the operator
\[
L_0:= - \left( \partial_s^2 + \frac{2}{(\cosh s)^2} \right).
\]
which arises as the linearized operator of the Toda equation
\[
\partial_s^2 T - e^{-2T} =0,
\]
at
\[
T (s) := \ln \, (\cosh s).
\]
It can be checked through direct computation that $-1$ is an eigenvalue associated to the eigenfunction
\[
s \mapsto \frac{1}{\cosh s}.
\]
In fact it is known that this is the only negative eigenvalue and hence $L_0$ has index one. The continuous spectrum of $L_0$ is given by $[0, +\infty)$ and the null space of $L_0$ is spanned by the functions
\[
\varphi_0^- (s) := \tanh s \qquad \text{and} \qquad \varphi_0^+ (s) := s \, \tanh s -1.
\]
These functions reflect the fact that the Toda equation is invariant under translation and scaling, namely, if $T$ is a solution of the Toda equation, then $s \mapsto T (\kappa\,  s  + \bar s ) - \ln \kappa$ is also a solution for all $\kappa >0$ and $\bar s \in \R$.

\medskip

The main result of this section reads:
\begin{proposition}
\label{pr:5.1}
For all $\varepsilon >0$ close enough to $0$, the Morse index of the operator $\mathcal L_\varepsilon$ is equal to
\[
n+\text{\rm Ind} (\Phi),
\]
where $n$ is the number of minimums of the non-degenerate Bouncing Jacobi Field $\Phi$ we have used in Proposition~\ref{pr:4.3} to construct $\Psi_\varepsilon$.
\end{proposition}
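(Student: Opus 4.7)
The strategy is to exploit the two very different length scales present in $\Psi_\varepsilon$ in order to decouple the spectral analysis of $\mathcal{L}_\varepsilon$ into an \emph{inner} problem localized near each minimum $s_j$ of $\Phi$, where $\mathcal{L}_\varepsilon$ is a rescaled perturbation of the Toda linearization $L_0 = -(\partial_s^2 + 2/\cosh^2 s)$, and an \emph{outer} problem away from the $s_j$'s, where the exponential potential $2\bar c^2 e^{-2\Psi_\varepsilon}$ is negligible and $\mathcal{L}_\varepsilon$ is a small perturbation of $\varepsilon^2 \mathfrak{J}_\gamma$. Concretely, I would introduce the $n$-dimensional subspace
\[
V := \mathrm{span}\Bigl\{\, Z_j(s) := \chi_j(s)\big/\cosh\bigl(\kappa_j(s-\bar s_j)/\varepsilon\bigr)\, : \, j=1,\dots,n \, \Bigr\},
\]
where each $\chi_j$ is a smooth cut-off supported in a window of size much larger than the Toda scale $\varepsilon/\alpha_\varepsilon$ but small at the macroscopic scale, and then count the negative eigenvalues of $\mathcal{L}_\varepsilon$ separately on $V$ and on its $L^2$-orthogonal complement $V^\perp$, exploiting the fact that the cross block is exponentially small.

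On $V$, the analysis is direct. Using the estimates of Proposition~\ref{pr:4.2} to expand $\Psi_\varepsilon$ near $\bar s_j$ and the exponential decay of $1/\cosh$, I would show that the matrix $\bigl(\langle \mathcal{L}_\varepsilon Z_i, Z_j\rangle\bigr)_{i,j}$ is diagonally dominant with negative diagonal entries of order $-\kappa_j^2\|Z_j\|_{L^2}^2$; indeed, after rescaling by $\kappa_j/\varepsilon$, the leading operator acting on $Z_j$ is $\kappa_j^2 L_0$, whose unique negative eigenvalue is $-1$ with eigenfunction $1/\cosh$. This produces exactly $n$ negative eigenvalues of $\mathcal L_\varepsilon$, all of size $\sim \varepsilon^2 \alpha_\varepsilon^2$, together with the corresponding $n$-dimensional negative subspace.

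On $V^\perp$ the analysis is more delicate, and is where the non-degeneracy of the Bouncing Jacobi Field is used decisively. The $L^2$-orthogonality to the $Z_j$ at the Toda scale forces any element $w$ to behave as an odd function in the inner variable near each $\bar s_j$; passing to the outer limit yields a function $\eta$ on $\gamma \setminus \{s_1, \ldots, s_n\}$ satisfying $\eta(s_j^+) + \eta(s_j^-) = 0$, which is precisely the admissibility condition for the quadratic form $\mathcal{Q}$ in (\ref{eq:3.2}). The key step is a matched asymptotic expansion of $\langle \mathcal{L}_\varepsilon w, w\rangle$: the outer region contributes $\varepsilon^2 \int(|\partial_s\eta|^2 - K_g\eta^2)\,ds$, while the transition zones near each $s_j$---where the quadratic correction $K_g(\bar s_j)\ln\kappa_j\,(s-\bar s_j)^2/2$ in the expansion of $T$ given by Proposition~\ref{pr:4.2} couples to the jump of $\eta$---should produce exactly the boundary term
\[
-\sum_{j=1}^n \frac{K_g(s_j)}{N_j}\bigl(\eta(s_j^+) - \eta(s_j^-)\bigr)^2
\]
appearing in $\mathcal{Q}$. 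Thus the restriction of $\langle \mathcal{L}_\varepsilon \cdot,\cdot\rangle$ to $V^\perp$ is a small perturbation of $\varepsilon^2\mathcal{Q}$, and by Proposition~\ref{pr:3.4} its index equals $\text{Ind}(\Phi)$. The non-degeneracy of $\Phi$ assumed in Proposition~\ref{pr:4.3} prevents any eigenvalue from drifting through zero as $\varepsilon \to 0$, and combining the two contributions via the min-max characterization yields both $\text{Ind}(\mathcal{L}_\varepsilon) = n + \text{Ind}(\Phi)$ and the injectivity of $\mathcal{L}_\varepsilon$ simultaneously. The principal technical hurdle is the rigorous identification of the coupling constant $K_g(s_j)/N_j$ in the outer quadratic form via a clean matched asymptotic expansion across the inner-outer interface.
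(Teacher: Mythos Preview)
Your lower-bound construction for the $n$ ``Toda'' directions via the test functions $Z_j$ coincides exactly with the paper's Lemma~5.2. For the remaining $\mathrm{Ind}(\Phi)$ directions, however, the paper does something different: rather than extracting them from an analysis on $V^\perp$, it builds explicit test functions $\tilde\eta$ by taking an eigenfunction $\eta$ of the discontinuous problem~(\ref{JTVAP}) and desingularizing it near each $s_j$ with a combination of $\varphi_j=\partial_s\Psi_\varepsilon$ and $\psi_j=\partial_\kappa T_\kappa|_{\kappa=\kappa_j}$, which are approximate kernel elements of $\mathcal L_\varepsilon$ at the inner scale. This produces the lower bound $n+\mathrm{Ind}(\Phi)$ directly, without any quadratic-form splitting.

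For the upper bound the paper does \emph{not} attempt a matched asymptotic expansion of the quadratic form on $V^\perp$; instead (Lemma~5.3) it runs a compactness/blow-up argument on a sequence of normalized eigenfunctions $w_k$ of $\mathcal L_{\varepsilon_k}$ with $\lambda_k\le 0$. If the point where $|w_k|$ is maximal stays at macroscopic distance from the $s_j$'s, the limit $w_\infty$ solves $\mathfrak J_\gamma w_\infty=\lambda_\infty w_\infty$ on each interval, and the two transmission conditions in Definition~\ref{de:3.2} are extracted by pairing the equation against $\partial_s\Psi_{\varepsilon_k}$ and $\partial_\kappa T_\kappa$ via Wronskian identities (this is where the constant $K_g(s_j)/N_j$ drops out, with no matched asymptotics needed). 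If instead the maximum concentrates at a minimum of $\Psi_{\varepsilon_k}$, the rescaled limit is the unique $L_0$-eigenfunction with eigenvalue $-1$. This dichotomy bounds the number of non-positive eigenvalues by $n+\mathrm{Ind}(\Phi)$ and simultaneously gives injectivity.

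Your route can in principle be made to work, but one step is shakier than you suggest: $L^2$-orthogonality to $Z_j$ is a single scalar constraint and does \emph{not} force $w$ to be odd in the inner variable (the even kernel element $\varphi_0^+=s\tanh s-1$ of $L_0$ is also orthogonal to $1/\cosh$). The condition $\eta(s_j^+)+\eta(s_j^-)=0$ and the second transmission condition are genuinely coupled and emerge only after a careful inner--outer analysis; the paper sidesteps this entirely with the Wronskian trick. What your approach would buy, if carried through, is a uniform comparison of quadratic forms rather than a subsequence argument; what the paper's approach buys is that the delicate coupling constant is read off from an integration by parts rather than a matched expansion.
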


The proof of this Proposition is decomposed into  two parts. First, we prove that, provided $\varepsilon$ is small enough, the index of $\mathcal L_\varepsilon$ is \emph{at least} equal to $n+\text{\rm Ind} (\Phi)$ and then, we prove that it is at most equal to this value and that $\mathcal L_\varepsilon$ does not have any kernel.

\medskip

We start with the:
\begin{lemma}
\label{le:5.2}
For all $\varepsilon$ small enough, the index of $\mathcal L_\varepsilon$ is \emph{at least} equal to $n+\text{\rm Ind} (\Phi)$.
\end{lemma}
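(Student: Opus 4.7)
The plan is to exhibit a subspace of $H^1(\gamma)$ of dimension $n + \text{Ind}(\Phi)$ on which the quadratic form
\[
Q_\varepsilon(\xi) := \int_\gamma \left( \varepsilon^2 (\partial_s \xi)^2 - \varepsilon^2 K_g \, \xi^2 - 2\bar c^2 \, e^{-2\Psi_\varepsilon} \, \xi^2 \right) ds
\]
associated to $\mathcal L_\varepsilon$ is negative definite. The subspace will be built as the span of two families of test functions corresponding to the two expected sources of instability: the $n$ localized Toda modes, and the $\text{Ind}(\Phi)$ Jacobi modes inherited from the negative directions of $\mathcal Q$.

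First, for the Toda directions, I would localize the negative eigenfunction $t \mapsto 1/\cosh t$ of $L_0$: for each $j=1,\ldots,n$, set
\[
\xi_j^{\sharp}(s) := \chi_j(s) \, \mathrm{sech}\!\left(\frac{\kappa_j(s-\bar s_j)}{\varepsilon}\right),
\]
with $\chi_j$ a cutoff concentrated at $\bar s_j$ on scale $\alpha_\varepsilon^{-1}$. By Proposition~\ref{pr:4.2}, $2\bar c^2 e^{-2\Psi_\varepsilon}$ is well approximated near $\bar s_j$ by $2(\kappa_j/\varepsilon)^2\,\mathrm{sech}^2(\kappa_j(s-\bar s_j)/\varepsilon)$, so the substitution $t = \kappa_j(s-\bar s_j)/\varepsilon$ reduces $Q_\varepsilon(\xi_j^\sharp)$, at leading order, to $\varepsilon\kappa_j$ times the Toda form applied to $1/\cosh t$. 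Since $-1$ is an eigenvalue of $L_0$ with eigenfunction $1/\cosh t$, this yields $Q_\varepsilon(\xi_j^\sharp) \leq -c\,\varepsilon\kappa_j < 0$, the curvature and cutoff contributions being of strictly higher order.

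Second, for the Jacobi directions, pick a basis $\eta^{(1)},\ldots,\eta^{(d)}$ with $d = \text{Ind}(\Phi)$ of a maximal subspace on which the form $\mathcal Q$ of (\ref{eq:3.2}) is negative definite (Proposition~\ref{pr:3.4}). Each $\eta^{(k)}$ satisfies $\mathfrak J_\gamma \eta^{(k)}=0$ on $\gamma\setminus\{s_1,\ldots,s_n\}$ and the antisymmetric jump $\eta^{(k)}(s_j^+) + \eta^{(k)}(s_j^-)=0$, with jump height $\sigma_j^{(k)} := \eta^{(k)}(s_j^+)$. I would regularize each $\eta^{(k)}$ into a continuous $\xi^{(k)} \in H^1(\gamma)$ by replacing the jump near $s_j$ by the Toda-shift profile $\sigma_j^{(k)}\tanh(\kappa_j(s-\bar s_j)/\varepsilon)$, glued through cutoffs into $\eta^{(k)}$ outside a fixed neighborhood. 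A matched asymptotic expansion of $Q_\varepsilon(\xi^{(k)})$ should then give
\[
Q_\varepsilon(\xi^{(k)}) = \varepsilon^2\,\mathcal Q(\eta^{(k)}) + o(\varepsilon^2) < 0,
\]
where the curvature correction $-K_g(s_j)(\sigma_j^{(k)})^2/N_j$ present in $\mathcal Q$ is produced by the quadratic term $K_g(\bar s_j)\ln\kappa_j\,(s-\bar s_j)^2/2$ appearing in the expansion of $T_0$ (Proposition~\ref{pr:4.2}) combined with the $\tanh$ profile across the Toda region.

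Third, the Toda modes $\xi_j^\sharp$ are supported on intervals of size $\varepsilon/\kappa_j \ll \alpha_\varepsilon^{-1}$, while the Jacobi modes $\xi^{(k)}$ are of order one on the complement and decay inside these intervals; direct estimates of the cross-terms show that the Gram matrix of $Q_\varepsilon$ on $\operatorname{span}\{\xi_j^\sharp\} \cup \operatorname{span}\{\xi^{(k)}\}$ is block-diagonal up to errors dominated by the diagonal. Hence $Q_\varepsilon$ stays negative definite on the full $(n+d)$-dimensional subspace, yielding $\text{Ind}(\mathcal L_\varepsilon) \geq n + \text{Ind}(\Phi)$. The principal obstacle is extracting exactly the curvature correction $-K_g(s_j)(\sigma_j^{(k)})^2/N_j$ from the Toda region in the second step: it requires tracking the logarithmic term in the expansion of $T_0$ together with the asymmetric jump of $\eta^{(k)}$, since any cruder regularization would reproduce only the bulk part of $\mathcal Q$ and miss the boundary contribution which is essential for the identification with $\text{Ind}(\Phi)$.
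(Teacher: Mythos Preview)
Your strategy is the same as the paper's: build $n$ Toda test functions from the localized $\mathrm{sech}$ profile and $\text{Ind}(\Phi)$ test functions by desingularizing the discontinuous Jacobi eigenfunctions across each minimum. The paper carries this out with two refinements worth noting. First, instead of the bare profile $\sigma_j\tanh(\kappa_j(s-\bar s_j)/\varepsilon)$, it uses the exact almost-kernel element $\varphi_j:=\partial_s\Psi_\varepsilon$ (which solves $\mathcal L_\varepsilon\varphi_j=\varepsilon^2\partial_sK_g\,\Psi_\varepsilon$) to absorb the value jump, together with $\psi_j:=\partial_\kappa T_\kappa|_{\kappa=\kappa_j}$ (which solves $\mathcal L_\varepsilon\psi_j=0$) to absorb the \emph{derivative} jump $\partial_s\eta(s_j^+)-\partial_s\eta(s_j^-)$; your one-parameter regularization ignores this second jump, which forces an extra mismatch at the gluing scale. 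Second, because $\varphi_j$ carries the linear correction $K_g(\bar s_j)\ln\kappa_j\,(s-\bar s_j)$ built in (from the derivative of the quadratic term in $T_0$), the identity $\varepsilon\ln\kappa_j/\kappa_j=-2/N_j+O(1/\alpha_\varepsilon)$ produces the boundary coefficient $-K_g(s_j)\sigma_j^2/N_j$ in $\mathcal Q$ directly from the test function rather than from the potential, which makes the matched asymptotic you describe considerably cleaner. Your plan is sound and your identification of the obstacle is accurate; the paper's choice of exact Jacobi fields of $\mathcal L_\varepsilon$ in the inner region is what makes that obstacle routine.
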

\begin{proof}
The construction in Proposition \ref{pr:4.3} tells that $\Psi_{\varepsilon}$
has local minimums at points $\bar s_{j}$ which depend on $\varepsilon$ but are close to the points $s_j$ where the Bouncing Jacobi Field $\Phi$ is minimal. It also follows from Proposition~\ref{pr:4.3}  and Proposition~\ref{pr:4.1}, that, in a fixed small neighborhood of $\bar s_j$ (which contains $s_j$), the function $\Psi_{\varepsilon}$ can be expanded as
\[
\begin{array}{rllll}
\Psi_\varepsilon (s) & = &  \displaystyle \ln \left( \cosh \left(\frac{\kappa_{j}}{\varepsilon} (s-\bar s_{j})\right)\right) - \ln \kappa_{j} + \ln \bar c + K_g(\bar s) \, \ln \kappa_j \, \frac{(s-\bar s_j)^2}{2} \\[3mm]
& + & \displaystyle \mathcal O \left(\alpha_\varepsilon \, |s-\bar s_j|^3+ \frac{|s-\bar s_j|}{\alpha_\varepsilon}\right),
\end{array}
\]
where $\kappa_j$ and $\bar s_j$ depend on $\varepsilon$.

\medskip

For all nonzero function $v \in H^1(\gamma)$, we define
\[
\mathcal Q (v) : = \displaystyle \int_\gamma \left( \varepsilon^2 \, (\partial_s v)^2 -  \left( \varepsilon^2 \, K_g + 2 \, \bar c^2\, e^{-2\Psi_\varepsilon}\right) \, v^2\right) \, ds.
\]

Given $j =1, \ldots, n$, we define the function $e_j : \gamma \to \R$ by
\[
e_j(s) :=  \frac{\chi (s-\bar s_j) }{\displaystyle \cosh \left(\frac{\kappa_j}{\varepsilon} (s-\bar s_j)\right)},
\]
where $\chi$ is a cutoff function identically equal to $1$ in some fixed small interval $(-c,c)$ and identically equal to $0$ outside $(- 2c, 2c)$. Using the fact that
\[
\left(\varepsilon^2 \, \partial_s^2 + \frac{2\, \kappa_j^2}{\displaystyle \cosh^2 \left(\frac{\kappa_j}{\varepsilon} (\cdot -\bar s_j)\right)}\right) \, \tilde e_j =  \kappa_j^2 \, \tilde e_j,
\]
in $(-c,c)$, together with the estimates in Proposition~\ref{pr:4.1}, one can check that
\[
\left|\mathcal Q (e_j) + \kappa_j^2\, \int_\gamma e_j^2 \, ds \right| \leq \frac{C}{\alpha_\varepsilon^2}.
\]
This estimate proves that, for $\varepsilon$ small enough, the vector space of functions over which $\mathcal Q$ is negative definite is at least $n$ dimensional.

\medskip

Now, assume that we are given a nonzero function $\eta : \gamma -\{s_1, \ldots, s_n\} \to \R$ solution of
\begin{equation}
    \label{JTVAP}
    \left\{
\begin{array}{rlllll}
\mathfrak J_\gamma \eta & = & \lambda \, \eta \quad \text{on} \quad \gamma-\{ s_1, \ldots, s_n\}\\[3mm]
\eta (s_j^+)  + \eta (s_j^-) & = & 0 \quad \text{for all} \quad j=1, \ldots, n\\[3mm]
\displaystyle \partial_s \eta (s_j^+) + \partial_- \eta (s_j^-) + \displaystyle 2 K_g (s_j) \, \frac{\eta (s_j^+) - \eta (s_j^-)}{N_j} & = & 0\quad \text{for all} \quad j=1, \ldots, n.
\end{array}
\right.
\end{equation}
with $\lambda \leq 0$, where we recall that $N_j:= \partial_s\Phi (s_j^+) - \partial_s \Phi (s_j^-)$.
Observe that the function $\eta$ is not necessarily continuous at $s_j$.

\medskip

We set
\[
\varphi_j (s) := \partial_s \Psi_{\varepsilon}(s),
\]
and
\[
\psi_j (s)  := \partial_\kappa  T_{\kappa} \,_{|\kappa = \kappa_j} (s).
\]
where $T_\kappa = T(\varepsilon, \kappa, \bar s_j)$ is the solution given in Proposition~\ref{pr:4.2} for $\kappa$ close to $\kappa_j$ and $\bar s = \bar s_j$ and which agrees with $\Psi_\varepsilon$ when $\kappa = \kappa_j$.
We have
\[
\left( \varepsilon^2 \, (\partial_s^2 + K_g)  + 2 \, \bar c^2 \, e^{-2\Psi_\varepsilon} \right) \, \varphi_j = \varepsilon^2 \, \partial_s K_g \, \Psi_\varepsilon,
\]
and, since $\kappa \mapsto T_\kappa$ is differentiable, we also have
\[
\left( \varepsilon^2 \, (\partial_s^2 + K_g)  + 2 \, \bar c^2 \, e^{-2\Psi_\varepsilon} \right) \, \psi_j =0,
\]
in some small neighborhood of fixed size, centered at $\bar s_j$. These functions can be used to desingularize $\eta$, which as mentioned above, might not even be continuous at the $s_j$'s. More precisely, since the function $\varphi_j$ can be expanded as
\[
\varphi_j (s) = \displaystyle \frac{\kappa_j}{\varepsilon}\, \tanh \left(\frac{\kappa_j}{\varepsilon} (s-\bar s_{j})\right) + K_g(\bar s) \, \ln \kappa_j \, (s-\bar s_j) +  \displaystyle \mathcal O \left(\alpha_\varepsilon \, (s-\bar s_j)^2+ \frac{1}{\alpha_\varepsilon}\right),
\]
and since the function $\psi_j$ can be expanded as
\[
\psi_j (s) = \displaystyle \frac{s-\bar s_j}{\varepsilon}\, \tanh \left(\frac{\kappa_{j}}{\varepsilon} (s-\bar s_{j})\right) \displaystyle + \mathcal O \left(\frac{1}{\kappa_j} + \frac{|s-\bar s_j|^3}{\varepsilon}+ \frac{|s-\bar s_j|}{\kappa_j \alpha_\varepsilon}\right),
\]

We can desingularize $\eta$ in a neighborhood of $s_j$ by defining
\[
\begin{array}{rllll}
\tilde \eta  (s) & : = & \displaystyle (1- \chi (s-\bar s_j)) \, \eta (s) + \chi (s-\bar s_j) \, \frac{\varepsilon}{\kappa_j} \frac{\eta(s_j^+) - \eta (s_j^-)}{2} \,  \varphi_j (s) \\[3mm]
 & + & \displaystyle \chi (s-\bar s_j) \,\, \varepsilon \, \frac{\partial_s \eta(s_j^+) - \partial_s \eta (s_j^-)}{2} \, \psi_j (s),
\end{array}
\]
where $\chi$ is a cutoff function identically equal to $1$ in $(s_j - 1/\alpha_\varepsilon, s_j + 1/\alpha_\varepsilon)$ and identically equal to $0$ outside  $(s_j - 2/\alpha_\varepsilon, s_j + 2/\alpha_\varepsilon)$.

\medskip

Using the fact that
\[
\varepsilon \frac{\ln \kappa_j}{\kappa_j} = - \frac{2}{N_j} + \mathcal O \left(\frac{1}{\alpha_\varepsilon}\right),
\]
we conclude with some work that
\[
\left| \mathcal Q  (\tilde \eta) - \varepsilon^2 \, \lambda \, \int_\gamma \tilde \eta^2 \, ds \right| \leq C \, \frac{\varepsilon^2}{\alpha_\varepsilon}.
\]

This estimate implies that, for $\varepsilon$ small enough, $\mathcal D$ is negative definite over the vector space of functions obtained by desingularizing the solutions of (\ref{JTVAP}) for $\lambda <0$.
We then conclude that the vector space of functions over which $\mathcal Q$ is negative definite is at least $n + \text{Ind} (\Phi)$ dimensional.
\end{proof}

\medskip

We now turn to the second part of the proof. This is the content of the:
\begin{lemma}
\label{le:5.3}
For all $\varepsilon$ small enough, the index of $\mathcal L_\varepsilon$ is at most equal to $n+\text{\rm Ind} (\Phi)$ and $\mathcal L_\varepsilon$ is injective.
\end{lemma}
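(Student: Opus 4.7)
The plan is to argue by contradiction: suppose, along a subsequence $\varepsilon_k \to 0$, there is an $L^2(\gamma)$-normalized eigenfunction $v_k$ of $\mathcal L_{\varepsilon_k}$ with eigenvalue $\lambda_k \leq 0$, either with $\lambda_k = 0$ and $v_k$ nontrivial (for injectivity), or with a family of such eigenfunctions whose linear span has dimension strictly larger than $n + \text{Ind}(\Phi)$ (for the index bound). By Proposition~\ref{pr:4.3}, the potential $2\bar c^2 e^{-2\Psi_\varepsilon}$ is exponentially small outside intervals of length $\mathcal O(\varepsilon/\alpha_\varepsilon)$ around each $\bar s_j$, while on these intervals, after the Toda rescaling $\xi = \kappa_j(s-\bar s_j)/\varepsilon$, it is asymptotic to $2\kappa_j^2/\cosh^2\xi$. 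This sets up a two-scale analysis of $v_k$ paralleling the construction in Lemma~\ref{le:5.2}.

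In the inner region near $\bar s_j$, set $w_{k,j}(\xi) := v_k(\bar s_j + \varepsilon_k\xi/\kappa_{k,j})$; the rescaled equation becomes $\partial_\xi^2 w_{k,j} + (2/\cosh^2\xi)w_{k,j} = o(1) + (\varepsilon_k^2\lambda_k/\kappa_{k,j}^2)\, w_{k,j}$, so (on compact sets, up to normalization) $w_{k,j}$ converges to a bounded solution $w_j$ of $L_0 w_j = \mu_j w_j$ with $\mu_j \leq 0$. The only possibilities are $\mu_j = -1$ with $w_j \in \text{span}(\text{sech}\,\xi)$, or $\mu_j = 0$ with $w_j \in \text{span}(\tanh\xi,\, \xi\tanh\xi - 1)$. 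In the outer region, $2\bar c^2 e^{-2\Psi_\varepsilon}$ is negligible and $\varepsilon_k^2 \mathfrak J_\gamma v_k = \lambda_k v_k + o(1)$, yielding a limit $\eta \in C^\infty(\gamma\setminus\{s_1,\ldots,s_n\})$ satisfying $\mathfrak J_\gamma \eta = \lambda_\infty \eta$ away from the $s_j$'s, where $\lambda_\infty = \lim \lambda_k$.

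The matching step is the crux. The $\text{sech}\,\xi$ component decays and contributes nothing to the outer profile. The $\tanh\xi$ component yields values $\pm\alpha_j$ as $\xi \to \pm\infty$, which after passing to the outer variable forces $\eta(s_j^+) + \eta(s_j^-) = 0$. The $\xi\tanh\xi - 1$ component produces linear growth which, once the $K_g(\bar s_j)\ln\kappa_j\,(s-\bar s_j)^2/2$ correction from the expansion of $T_0$ in Proposition~\ref{pr:4.2} is taken into account (together with the identity $\varepsilon\ln\kappa_j/\kappa_j = -2/N_j + \mathcal O(1/\alpha_\varepsilon)$ already exploited in Lemma~\ref{le:5.2}), produces the derivative jump condition
\begin{equation*}
\partial_s\eta(s_j^+) + \partial_s\eta(s_j^-) + 2\, K_g(s_j)\,\frac{\eta(s_j^+) - \eta(s_j^-)}{N_j} = 0.
\end{equation*}
Thus $\eta$ solves exactly the system of Definition~\ref{de:3.2} in the limit case $\lambda_k \to 0$, and an eigenvalue-perturbed version of it otherwise.

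Combining everything: for the injectivity claim, non-degeneracy of $\Phi$ forces $\eta \equiv 0$, so the mass of $v_k$ concentrates in the inner regions in the $\text{sech}\,\xi$ mode alone; but that mode corresponds to the Toda eigenvalue $\mu_j = -1$, hence to $\lambda_k \sim -\kappa_j^2/\varepsilon_k^2 \neq 0$, contradicting $\lambda_k = 0$. For the index upper bound, directions with $\varepsilon_k^2\lambda_k \to 0$ correspond to nontrivial $\eta$'s lying in the non-positive cone of the quadratic form $\mathcal Q$ of (\ref{eq:3.2}), which by Proposition~\ref{pr:3.4} has dimension exactly $\text{Ind}(\Phi)$ (using non-degeneracy again to exclude the kernel); directions with $\varepsilon_k^2\lambda_k \to -\kappa_j^2$ arise only from the $\text{sech}\,\xi$ modes, one per $\bar s_j$, contributing at most $n$. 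Together with Lemma~\ref{le:5.2}, this yields the claimed equality. The main obstacle is the matching in the third paragraph: tracking the coefficient $K_g(s_j)/N_j$ requires careful propagation of the subleading $\ln\kappa_j$ corrections through both Proposition~\ref{pr:4.2} and the patching of Proposition~\ref{pr:4.3}, and is essentially the second-variation computation of Proposition~\ref{pr:3.1} run in reverse.
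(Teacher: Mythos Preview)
Your overall strategy---blow-up analysis with a dichotomy between an outer limit solving $\mathfrak J_\gamma\eta=\lambda_\infty\eta$ on $\gamma\setminus\{s_j\}$ with jump conditions, and an inner limit solving $L_0 w=\mu w$---is exactly the paper's. The substantive difference lies in how the jump conditions are extracted. The paper does \emph{not} perform the asymptotic matching you describe; instead it uses a Wronskian argument. Concretely, it integrates $\psi\,\partial_s^2 w_k - w_k\,\partial_s^2\psi$ over $(s_j-\dot s,\,s_j+\dot s)$, first with $\psi=\partial_s\Psi_{\varepsilon_k}$ (which satisfies $\mathcal L_{\varepsilon_k}\psi=\varepsilon_k^2\,\partial_sK_g\,\Psi_{\varepsilon_k}$, a controlled error), then with $\psi=\partial_\kappa T_\kappa|_{\kappa=\kappa_j}$. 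Letting $k\to\infty$ followed by $\dot s\to 0$ gives the two transmission conditions directly, and the coefficient $2K_g(s_j)/N_j$ falls out from $\partial_s^2\Psi_{\varepsilon_k}/\alpha_{\varepsilon_k}\to -K_g\Phi$ together with $\partial_s\Phi(s_j^\pm)=\pm N_j/2$. This bypasses the delicate tracking of the $\ln\kappa_j$ corrections through Propositions~\ref{pr:4.2}--\ref{pr:4.3} that you flag as the main obstacle; the Wronskian trick packages all of that into the single statement that $\partial_s\Psi_\varepsilon$ and $\partial_\kappa T_\kappa$ are exact (or nearly exact) solutions of $\mathcal L_\varepsilon\psi=0$.

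Two smaller points. First, there is a scaling slip: the outer eigenvalue is $\lambda_\infty=\lim\lambda_k/\varepsilon_k^2$, not $\lim\lambda_k$; correspondingly the two regimes are $\lambda_k/\varepsilon_k^2$ bounded (Case~1, giving the $\mathcal Q$-eigenfunction) versus $\lambda_k/\kappa_{k,j}^2$ bounded away from zero (Case~2, giving the Toda mode). Second, the paper normalizes in $L^\infty$ and tracks the point $t_k$ where $|w_k|=1$; this is what drives the case split and guarantees the limit is nontrivial, whereas your $L^2$ normalization would require an extra argument to rule out mass escaping at an intermediate scale.
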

\begin{proof}
Let us assume that we have a sequence $(\varepsilon_k)_{k=0}^\infty$ tending to $0$ and $\Psi_{\varepsilon_k}$ the corresponding solutions of the Jacobi-Toda equation given by Proposition~\ref{pr:4.3} and $w_k$ a non-zero eigenfunction of $\mathcal L_{\varepsilon_k}$, associated to a non-positive eigenvalue $\lambda_k \leq 0$, namely
\[
\mathcal L_{\varepsilon_k} \, w_k = \lambda_k \, w_k.
\]
Without loss of generality, we can normalize $w_k$ so that $\|w_k\|_{L^\infty}=1$ and choose a point $t_k$ in $\gamma$ where $w_k(t_k)=1$.

\medskip

We now distinguish two cases.

\medskip

\noindent
{\bf Case 1.} Assume that, up to a subsequence, $(t_k)_{k\geq 0}$ converges to a point $t_\infty \notin \{s_1, \ldots, s_n\}$, or that it converges \emph{slowly} to some $s_j$ in the sense that
\[
A := \lim_{k\to +\infty} \frac{1}{\varepsilon_k^2} e^{-2\Psi_{\varepsilon_k} (t_k)}  < +\infty .
\]
In this case, it follows from the maximum principle that
\[
- \sup_\gamma K_g -  2\, \bar c^2 \, A  \leq \limsup_{k\to +\infty} \frac{\lambda_k}{\varepsilon^2_k} \leq 0 ,
\]
and, extracting a subsequence if necessary, we can assume that $(\varepsilon^{-2}_k \, \lambda_k)_{k\geq 0}$ converges to some $\lambda_\infty \leq 0$ and we can also assume that $w_k$ converges to $w_\infty$ which is smooth away from the points $s_1, \ldots, s_n$. Finally, $w_\infty$ is a solution of
\[
\mathfrak J_\gamma    w_\infty = \lambda_\infty w_\infty.
\]
The convergence of $(w_k)_{k=0}^\infty$ to $w_\infty$ is uniform on compacts of $\gamma-\{s_1, \ldots, s_n\}$ and, {\it a priori}, the function $w_\infty$ (which is a solution of a second order linear ordinary differential equation) has both right and left limits and right and left derivatives at $s_1, \ldots, s_n$.

\medskip

Observe that $w_\infty$ is not identically equal to $0$. This is clear when $(t_k)_{k\geq 0}$ converges to $t_\infty \notin \{s_1, \ldots, s_n\}$. If $(t_k)_{k\geq 0}$ converges slowly to $s_j$ as explained above, one can use the fact that $w_k (t_k)=1$ and $w_k$ solves a second order ordinary differential equation with coefficients uniformly bounded to check that $w_k$ remains larger than $1/2$ on some interval $[t_k, t_k+c)$, if $t_k >s_j$ (or $(t_k-c, t_k]$, if $t_k < s_j$). This again implies that $w_\infty$ is not identically equal to $0$ either on the interval $(s_j, s_{j+1})$ (or on the interval  $(s_{j-1}, s_j)$).

\medskip

Now, for each $j=1, \ldots, n$, we fix $\dot s >0$ small and compute
\begin{equation}
\begin{array}{rllll}
\displaystyle \int_{s_j-\dot s}^{s_j+\dot s} \left( \psi \, \partial_s^2 w_k - w_k \, \partial_s^2 \psi \right) ds & = & \left( \psi \, \partial_s w_k - w_k \, \partial_s \psi \right) (s_j+\dot s) \\[3mm]
& - & \left( \psi \, \partial_s w_k - w_k \, \partial_s \psi \right) (s_j-\dot s) ,
\end{array}
\label{eq:5.2}
\end{equation}
where
\[
\psi := \partial_s \Psi_{\varepsilon_k}.
\]
We have
\[
\mathcal L_{\varepsilon_k} \psi = \varepsilon_k^2\, \partial_s K_g \, \Psi_{\varepsilon_k},
\]
hence
\[
\int_{s_j-\dot s}^{s_j+\dot s} \left( \psi \, \partial_s^2 w_k - w_k \, \partial_s^2 \psi \right) ds =  \int_{s_j-\dot s}^{s_j+\dot s} \left( \partial_s K_g \, \Psi_{\varepsilon_k} \, w_k -\frac{\lambda_k}{\varepsilon_k^2} w_k\, \psi \right)\, ds .
\]
Since $\Psi_{\varepsilon_k}+|\partial_s \Psi_{\varepsilon_k}|\leq C \, \alpha_{\varepsilon_k}$ for some constant $C >0$ independent of $k$, we get
\[
\left| \int_{s_j-\dot s}^{s_j+\dot s} \left( \partial_s K_g \, \Psi_{\varepsilon_k} \, w_k -\frac{\lambda_k}{\varepsilon_k^2} w_k\, \psi \right)\, ds \right| \leq C\, \dot s \, \alpha_{\varepsilon_k} .
\]

It follows from the construction of $\Psi_{\varepsilon_k}$ that, away from $s_j$, the function $\Psi_{\varepsilon_k}$ is a small perturbation of $\alpha_{\varepsilon_k}\, \Phi$. Hence,
\[
\lim_{k\to +\infty} \frac{\psi (s_j\pm\dot s)}{\alpha_{\varepsilon_k}}= \partial_s \Phi(s_j+\dot s) ,
\]
and
\[
\lim_{k\to +\infty} \frac{\partial_s \psi (s_j\pm \dot s)}{\alpha_{\varepsilon_k}}= \partial^2_s \Phi (s_j\pm \dot s) =  K_g(s_j\pm \dot s) \, \Phi(s_j\pm \dot s).
\]
Letting $k$ tend to infinity in (\ref{eq:5.2}), we conclude that
\[
\left| (\partial_s \Phi \, \partial_s w_\infty - K_g w_\infty ) (s_j+\dot s) - (\partial_s \Phi \, \partial_s w_\infty - K_g w_\infty ) (s_j-\dot s) \right| \leq C\, \dot s.
\]
Letting $\dot s$ tend to $0$, we conclude that
\begin{equation}
(\partial_+ w_\infty + \partial_- w_\infty) (s_j) +  \frac{2K_g (s_j)}{N_j} \, (w_\infty(s_j^+) - w_\infty(s_j^-)) = 0.
\label{eq:5.3}
\end{equation}

Performing the same analysis with
\[
\psi (s) := \partial_\kappa T_{\kappa} |_{\kappa = \kappa_j} (s),
\]
we obtain
\[
w_\infty (s_j^+)= -w_\infty (s_j^-).
\]
which together with (\ref{eq:5.3}) implies that  $w_\infty$ is continuous on $\gamma$ away from $s_j$, solves $\mathfrak J_\gamma    w_\infty = -\lambda_\infty w_\infty(s^+_j)$ on $(s_j, s_{j+1})$ and
\[
\partial_s w_\infty (s_j^+) + \partial_s w_\infty (s_j^-) + \frac{4 K_g}{N_j} \, w_\infty =0,
\]
at $s_j$, for all $j=1, \ldots, n$. Since we have assumed the Bouncing Jacobi Field to be non-degenerate, we also conclude that $\lambda_\infty \neq 0$.

\medskip

\noindent
{\bf Case 2.} Assume that (up to a subsequence), the sequence $(t_k)_{k=0}^\infty$ converges \emph{fast} to some $s_j$, in the sense that
\[
\lim_{k\to +\infty} \frac{1}{\varepsilon_k^2}\, e^{-2 \Psi_{\varepsilon_k} (t_k)} = +\infty
\]
In this case, we define
\[
\hat w_k (s):= w_k\left( s_j + \frac{\varepsilon_k}{\kappa_{k,j}} s \right),
\]
which is a solution of
\[
\left( \partial^2_s + \left(\frac{\varepsilon_k}{\kappa_{k,j}}\right)^2 \, \hat K_g \right) \, \hat w_k +2 \, \left( \frac{\bar c}{\kappa_{k,j}}\right)^2 \, e^{-2\hat \Psi_k} \hat w_k=- \frac{\lambda_k}{\kappa_{k,j}^2} \, \hat w_k,
\]
where
\[
\hat K_g(s) := K_g \, \left( s_j + \frac{\varepsilon_k}{\kappa_{k,j}} s \right)\qquad \text{and}\qquad  \hat \Psi_k (s) := \Psi_{\varepsilon_k}\left( s_j + \frac{\varepsilon_k}{\kappa_{k,j}} s \right).
\]
Since we have assumed that the $L^\infty$ norm of $w_k$ tends to zero on any compact interval away from the local minimums, there exists a constant $c < 0$ independent of $\varepsilon$, such that
\[
\frac{\lambda_k}{\kappa^2_{k,j}}< c.
\]
The maximum principle also tells that for some  constant $C >0$ independent of $\varepsilon$,
\[
\frac{\lambda_k}{\kappa^2_{k,j}} > - C.
\]
Therefore we can assume that up to a subsequence,
\[
\lim_{k\to \infty}\frac{\lambda_k}{\kappa^2_{k,j}}= \lambda_\infty.
\]
Then the sequence of functions $\hat w_k$ will converge to $w_\infty$ which satisfies
\[
-  (\partial^2_s + 2 \bar c^2 e^{-2 u}) \, w_\infty = \lambda_\infty w_\infty.
\]
We claim that $w_\infty$ is not identically equal to $0$. Assuming the claim is proven, this implies that that $w_\infty$ is an eigenfunction of the linearized Toda equation with negative eigenvalue, which is unique as we mentioned before. In particular, this implies that $\lambda_\infty =-1$.

\medskip

It remains to prove the claim. Obviously, the claim holds if (for a subsequence),
\[
\lim_{k\to +\infty} \frac{\kappa_{k,j}}{\varepsilon_k}\, (t_k-s_j) \in \R.
\]
Now, if, for example,
\[
\lim_{k\to +\infty} \frac{\kappa_{k,j}}{\varepsilon_k}\, (t_k-s_j) =+\infty,
\]
we can write
\[
\partial_s^2 \hat w_k \geq - \left(\frac{\varepsilon_k}{\kappa_{k,j}}\right)^2\, \hat K_g  - 2 \, \left( \frac{\bar c}{\kappa_{k,j}}\right)^2 \, e^{-2\hat \Psi_k} =: f_k.
\]
since $\lambda_k \leq 0$ and $\hat w_k\leq 1$. We set
\[
\hat t_k :=\frac{\kappa_{k,j}}{\varepsilon_k}\, (t_k-s_j).
\]
Using the fact that $\hat w_k (\hat t_k) = 1$, $\partial_s \hat w_k (\hat t_k) =0$, we get
\[
\begin{array}{rllll}
\hat w_k (s) & \geq & \displaystyle 1 - \int_s^{\hat t_k} \int_{x}^{\hat t_k} f_k(y) \, dy\, dx \\[3mm]
& \geq & \displaystyle 1 - C \,\left(  \left(\frac{\varepsilon_k}{\kappa_{k,j}}\right)^2 \, (\hat t_k -s)^2 - e^{- 2 \, (\hat t_k-s)}\right),
\end{array}
\]
for all $s \in (0, \hat t_k)$. Observe that
\[
\lim_{k\to +\infty} \frac{\varepsilon_k}{\kappa_{k,j}}\, \hat t_k =0,
\]
hence, we conclude that, if $\hat s >0$ is fixed large enough, then $\hat w_k (\hat s) \geq 1/2$ for all $k$ large enough. This implies that $w_\infty $ is not identically equal to $0$ and the proof of the claim is complete.

\medskip

Combining the discussion for all cases, we see that an eigenfunction of $\mathcal{L}_\varepsilon$ either converges to an eigenfunction of the linearized problem associated to the quadratic form $\mathcal Q$ (after a suitable rescaling), or to an eigenfunction of the linearized Toda equation. The conclusion of the lemma then follows from Proposition \ref{pr:3.4} and Lemma~\ref{le:5.2}.
\end{proof}

\begin{proof}[Proof of Proposition~\ref{pr:5.1}]
Collecting the results of Lemma~\ref{le:5.2} and Lemma~\ref{le:5.3}, we see that the proof of Proposition~\ref{pr:5.1} is now  complete.
\end{proof}



Observe that, as a by product of the proof of Lemma~\ref{le:5.3}, we have the:
\begin{proposition}
\label{pr:5.4}
There exists a constant $c >0$ such that, for all $\varepsilon$ small enough, there are no eigenvalues of $\mathcal L_\varepsilon$ in  $[-c\, \varepsilon^2, c\, \varepsilon^2]$.
\end{proposition}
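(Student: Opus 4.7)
The plan is to argue by contradiction, running the same concentration--compactness scheme used in Lemma~\ref{le:5.3} but tracking the scaling of the eigenvalue more carefully. Suppose the conclusion fails. Then there exist sequences $\varepsilon_k \to 0$ and eigenvalues $\lambda_k$ of $\mathcal L_{\varepsilon_k}$ with $|\lambda_k|/\varepsilon_k^2 \to 0$, together with associated eigenfunctions $w_k$ which I normalize by $\|w_k\|_{L^\infty(\gamma)}=1$ and choose points $t_k \in \gamma$ with $|w_k(t_k)|=1$. Passing to a subsequence, I am in one of the two cases analyzed in the proof of Lemma~\ref{le:5.3}.

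In the slow concentration case, where $\varepsilon_k^{-2} e^{-2\Psi_{\varepsilon_k}(t_k)}$ stays bounded, the maximum principle yields $\lambda_k/\varepsilon_k^2$ bounded, and in fact $\lambda_k/\varepsilon_k^2 \to 0$ by hypothesis. The argument in Lemma~\ref{le:5.3} then produces a nontrivial limit $w_\infty$ on $\gamma \setminus \{s_1,\ldots,s_n\}$ which satisfies $\mathfrak J_\gamma w_\infty = 0$, together with the matching relations
\[
w_\infty(s_j^+) + w_\infty(s_j^-) = 0, \qquad \partial_s w_\infty(s_j^+) + \partial_s w_\infty(s_j^-) + \frac{4\,K_g(s_j)}{N_j}\,w_\infty(s_j^+) = 0,
\]
for $j=1,\ldots,n$, obtained via the Wronskian identities tested against the smooth perturbations $\partial_s \Psi_{\varepsilon_k}$ and $\partial_\kappa T_\kappa|_{\kappa=\kappa_j}$. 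This is exactly the linearized system of Definition~\ref{de:3.2}, and the non-degeneracy hypothesis on $\Phi$ forces $w_\infty \equiv 0$, contradicting $|w_\infty(t_\infty)|=1$.

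In the fast concentration case, where $t_k \to s_j$ with $\varepsilon_k^{-2} e^{-2\Psi_{\varepsilon_k}(t_k)} \to \infty$, rescaling as in Lemma~\ref{le:5.3} produces $\hat w_k$ satisfying a perturbation of the linearized Toda equation with rescaled eigenvalue $\lambda_k/\kappa_{k,j}^2$. Since $\kappa_{k,j}^2 \sim \varepsilon_k^2\,\alpha_{\varepsilon_k}^2$ and $|\lambda_k|=o(\varepsilon_k^2)$, one has $\lambda_k/\kappa_{k,j}^2 \to 0$, so the limit $w_\infty$ is a bounded solution of $L_0 w_\infty = 0$. The bounded part of $\ker L_0$ is one dimensional and spanned by $\tanh s$, so $w_\infty = c\,\tanh s$ for some $c\neq 0$. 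Translating back, this tells me that on macroscopic scales $w_k$ approaches the values $\pm c$ on the two sides of $s_j$, i.e.\ the outer limit $\eta$ of $w_k$ (extracted simultaneously on $\gamma\setminus\{s_1,\ldots,s_n\}$) satisfies $\eta(s_j^+) = c$, $\eta(s_j^-) = -c$. Running the same Wronskian matching as in Case~1 at every bouncing point (reading the correct jump, whether coming from a $\tanh$ profile or from the trivial rescaled limit), $\eta$ solves the full linearized system of Definition~\ref{de:3.2}. Non-degeneracy of $\Phi$ then forces $\eta\equiv 0$, which contradicts $\eta(s_j^+) = c \neq 0$.

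The main obstacle, as in Lemma~\ref{le:5.3}, is the matching step in the fast concentration case: one must verify that the two scales — the inner Toda window around each $s_j$ and the outer Jacobi regime — fit together so that the outer limit inherits both jump relations of Definition~\ref{de:3.2}. This is accomplished exactly by the Wronskian computations with the pair of quasi-kernel functions $\partial_s \Psi_{\varepsilon_k}$ and $\partial_\kappa T_\kappa|_{\kappa=\kappa_j}$, whose error terms are controlled by the sharp estimates of Propositions~\ref{pr:4.1} and~\ref{pr:4.2}; the only new input compared with Lemma~\ref{le:5.3} is the observation that the hypothesis $|\lambda_k|=o(\varepsilon_k^2)$ drives the limit eigenvalue to zero in both the BJF scale and the Toda scale, which is precisely the regime excluded by the non-degeneracy of $\Phi$.
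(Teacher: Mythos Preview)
Your argument is correct and follows the paper's own route: argue by contradiction and rerun the two-case analysis of Lemma~\ref{le:5.3}, noting that the hypothesis $\lambda_k/\varepsilon_k^2\to 0$ forces the limiting eigenvalue to vanish in both the outer (Bouncing Jacobi Field) scale and the inner (Toda) scale, which is ruled out by the non-degeneracy of $\Phi$. Your treatment of Case~2 is in fact more explicit than the paper's one-line ``applies verbatim'': you correctly observe that the inner blow-up now lands on the bounded kernel element $\tanh$ of $L_0$ rather than on the $-1$ eigenfunction, and you close the contradiction by matching this with the outer limit to produce a nontrivial solution of the system in Definition~\ref{de:3.2}.
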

\begin{proof}
The proof is by contradiction. We assume that there was a sequence of eigenvalues $\lambda_k$ of $\mathcal L_{\varepsilon_k}$ such that
\[
\lim_{k\to +\infty} \frac{\lambda_k}{\varepsilon_k^2} =0.
\]
and $\lim_{k\to +\infty} \varepsilon_k =0$.
Then, the proof of Lemma~\ref{le:5.3} applies verbatim to get a contradiction.
\end{proof}


\begin{remark}
\label{re:5.5}
We have explained in Remark~\ref{re:4.4} that the construction in \cite{Manuel2} was making use of an approximate solution of the Jacobi-Toda equation which is close to a constant which can be expanded as
\[
\psi = -\ln \varepsilon - \frac{1}{2} \, \ln \left( - \ln \varepsilon\right) + \mathcal O (1),
\]
The linearized Jacobi-Toda equation about this solution reads
\[
-  \varepsilon^2 \, \left(\partial_s^2 + K_g \right) + 2 \, \bar c^2\, e^{-2\psi},
\]
but
\[
2 \, \bar c^2\, e^{-2\psi} = - \varepsilon^2 \, \left( K_g \ln \varepsilon + \mathcal O \left(\ln \, (-\ln \varepsilon) \right)\right),
\]
and this implies that the Morse index of this linearized equation is of order $-\ln \varepsilon$ and hence tends to infinity as $\varepsilon$ tends to $0$. This in turn implies that the solutions of the Allen-Cahn equation constructed in \cite{Manuel2} have Morse indices which tend to infinity as $\varepsilon$ tends to $0$.
\end{remark}

Since the index of $\mathcal L_{\varepsilon}$ is bounded from below by the index of geodesic
$\gamma,$ we have the following:
\begin{corollary}
\label{co:5.6}
Suppose there is a non-degenerate Bouncing Jacobi Field $\Phi$ with $n$ minimums for the geodesic $\gamma$. Then $2n\geq \text{\rm Ind}\left(  \gamma\right).$
\end{corollary}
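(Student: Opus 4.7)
The plan is to combine Proposition~\ref{pr:5.1} with two elementary index comparisons, one giving an upper bound on $\text{Ind}(\Phi)$ in terms of $n$, and the other giving a lower bound on $\text{Ind}(\mathcal L_\varepsilon)$ in terms of $\text{Ind}(\gamma)$. Both bounds follow essentially from inspecting the quadratic forms involved, so there is no serious technical obstacle; the entire content of the corollary is in correctly chaining the identifications already established in the paper.

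First, I would observe that $\text{Ind}(\Phi)\le n$. Indeed, by definition $\text{Ind}(\Phi)$ is the number of negative eigenvalues of the Hessian of $\mathcal H_n$ at the critical point $(s_1,\ldots,s_n)\in\Omega_n\subset\R^n$, which is an $n\times n$ symmetric matrix, so its index is at most $n$.

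Next, I would show the inequality
\[
\text{Ind}(\gamma)\;\le\;\text{Ind}(\mathcal L_\varepsilon).
\]
For this, note that the quadratic form associated to $\mathcal L_\varepsilon$ on $H^1(\gamma)$ is
\[
v\longmapsto \int_\gamma\left(\varepsilon^2(\partial_s v)^2-\varepsilon^2 K_g\, v^2-2\bar c^2 e^{-2\Psi_\varepsilon}v^2\right)\,ds,
\]
which, since $2\bar c^2 e^{-2\Psi_\varepsilon}\ge0$, is pointwise dominated by $\varepsilon^2$ times the quadratic form $v\mapsto Q_\gamma(v)$ associated to $\mathfrak J_\gamma$. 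Consequently, any subspace of $H^1(\gamma)$ on which $Q_\gamma$ is negative definite is a subspace on which the quadratic form of $\mathcal L_\varepsilon$ is also negative definite, yielding the claimed lower bound on $\text{Ind}(\mathcal L_\varepsilon)$.

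Finally, applying Proposition~\ref{pr:5.1} for all sufficiently small $\varepsilon>0$, we have $\text{Ind}(\mathcal L_\varepsilon)=n+\text{Ind}(\Phi)$, and combining the two inequalities gives
\[
\text{Ind}(\gamma)\;\le\;\text{Ind}(\mathcal L_\varepsilon)\;=\;n+\text{Ind}(\Phi)\;\le\;2n,
\]
which is the desired conclusion. The only mildly subtle point is ensuring that the lower bound $\text{Ind}(\gamma)\le\text{Ind}(\mathcal L_\varepsilon)$ is strict enough to apply even though $\mathcal L_\varepsilon$ acts on $H^1(\gamma)$ with a weight $\varepsilon^2$ in front of the derivative term; but since multiplying a quadratic form by a positive constant $\varepsilon^2$ does not change its index, the comparison is valid.
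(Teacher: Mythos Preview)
Your proof is correct and follows exactly the same approach as the paper: compare the quadratic form of $\mathcal L_\varepsilon$ with that of $\varepsilon^2\mathfrak J_\gamma$ to get $\text{Ind}(\gamma)\le\text{Ind}(\mathcal L_\varepsilon)$, invoke Proposition~\ref{pr:5.1} to identify $\text{Ind}(\mathcal L_\varepsilon)=n+\text{Ind}(\Phi)$, and bound $\text{Ind}(\Phi)\le n$ by the size of the Hessian. You have simply spelled out in more detail the two comparison steps that the paper states without elaboration.
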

\begin{proof}
We can construct a solution $\Psi_\varepsilon$ of the Jacobi-Toda equation using the Bouncing Jacobi Field $\Phi$. Now we know that the corresponding index of $\mathcal{L}_\varepsilon$ is equal to $
n + \text{\rm Ind} (\Phi)$. From this, we deduce that
\[
n+\text{\rm Ind} (\Phi)\geq \text{\rm Ind} (\gamma).
\]
Since $\text{\rm Ind} (\Phi)\leq n$, we conclude that
\[
2n\geq \text{\rm Ind}(\gamma),
\]
which is the required inequality.
\end{proof}

This last result provides a necessary condition for the existence of  Bouncing  Jacobi Fields with prescribed number of minimums.
\section{Local geometry about a closed geodesic and the existence of solutions to the Allen-Cahn equation}

In this section, we explain how the previous analysis leads to solutions of the Allen-Cahn equation.
Observe that, away from the transition layers, the solutions of the Allen-Cahn equation converge exponentially fast to either $1$ or $-1$ (in fact the convergence is like $\exp (-\text{dist} (\cdot, \gamma)/(\sqrt 2 \, \varepsilon))$. This implies that the construction near a given geodesic is hardly perturbed by the construction near a different geodesic. The construction of solutions whose transition layer converges with multiplicity one to a geodesic is the same as the one done in \cite{Pacard, Pac-Role}).
Hence, to simplify the exposition, we will assume ${\bf G}^1$ is empty.
The construction of solutions whose transition layer converges with multiplicity two to a geodesic follows the same lines as in \cite{Manuel2} with some differences we will explain. To simplify the exposition, we will assume that ${\bf G}^2$ is reduced to a single geodesic. Thanks to the exponential convergence away from the transition layers, the construction in the general case follows the same line with more involved notations.

\medskip

Assume we are given a two-sided, closed embedded geodesic $\gamma$ in $(M,g)$. We choose a unit normal vector field $\nu$ along $\gamma$ and we define
\[
Z (s,t) := \exp_{s}\left(t\,\nu (s)\right),
\]
where $s \in \gamma$ and $t\in \R$ is close to $0$. It is well known that $Z$ is a diffeomorphism from $\gamma \times (-\tau, \tau)$ into a tubular neighborhood of $\gamma$ in $M$, provided $\tau >0$ is fixed small enough. The coordinates $(s,t) \in \gamma\times (-\tau,\tau)$ are called \emph{Fermi coordinates} relative to the geodesic $\gamma$. Observe that $t$ is \emph{signed distance} to the geodesic $\gamma$.

\medskip

In these coordinates, the metric on $M$ can be expanded as
\[
g = \left( 1 - K_g (s) \, t^2 +\mathcal O (t^3) \right) \, ds^2 + dt^2 ,
\]
where $s \mapsto K_g(s)$ is the Gauss curvature along $\gamma$ \cite{Pacard, Pac-Role}. The fact that, in this expansion, the coefficient of $ds^2$ does not involve any linear term in $t$ is a consequence of the fact that $\gamma$ is a geodesic, and the fact that there are no terms in $ds\,dt$ in the expression of $g$ in these coordinates is a consequence of Gauss's Lemma.

\medskip

In these coordinates, the Laplace-Beltrami operator on $M$ is given by
\[
\Delta_{g} = \displaystyle \frac{1}{\sqrt{ |g|}} \partial_s \left( \sqrt{|g|} \, g^{ss} \, \partial_s\right) + \frac{1}{\sqrt {|g|}} \partial_t \left( \sqrt{|g|} \, g^{tt} \, \partial_t\right)
\]
where $|g|$ is the coefficient of $ds^2$ in the expression of $g$ and $g^{ss}$ is the inverse of this coefficient. Moreover, $g^{tt} =1$. One can check that $\Delta_{g}$ has the following expansion close to $\gamma$
\begin{equation}
\label{lap-Bel}
\Delta_{g} = (1+ \mathcal O(t^2)) \, \partial_s^2 + {\mathcal O} (t^2) \, \partial_s  +  \partial_t^2  - \left(K_g(s) + \mathcal O (t) \right) \, t \,\partial_t.
\end{equation}

 \medskip

Recall that the \emph{heteroclinic solution} of the Allen-Cahn equation is defined to be the unique entire solution of
\[
W^{\prime\prime}+W-W^3=0
\]
satisfying $\displaystyle \lim_{x\to \pm \infty} W(x) = \pm 1$ and $W(0)=0$. It is explicitly given by
\[
W(x) := \tanh \left( \frac{x}{\sqrt 2}\right).
\]
To build a solution of the Allen-Cahn equation on $M$ whose transition layer converges, with multiplicity $1$, to a given closed embedded geodesic, it is reasonable to look for a (small) perturbation of the function explicitly given in Fermi coordinates by
\[
U_\varepsilon (s,t) := W \left(t/\varepsilon \right) .
\]
To allay notations, we set
\[
\mathcal N_\varepsilon (U) := \varepsilon^2 \, \Delta_g U + U - U^3.
\]
Using the expansion of the Laplace-Beltrami operator in Fermi coordinates, we get the expression of the error
\[
\displaystyle {\mathcal N}_\varepsilon ( U_\varepsilon) (s,t) = - \varepsilon^2 \, K_g (s) \, \frac{t}{\varepsilon} \,  W'\left(t/\varepsilon\right)\, (1+ \mathcal O (t)) .
\]
Observe that, the function on the right-hand-side is bounded by a constant $\varepsilon^2$ and decays exponentially fast as one moves away from $\gamma$.

\medskip

In the normal direction to the geodesic, the approximate solution is, up to a scaling factor, close to a translated copy of the one dimensional heteroclinic solution $W$. Since the one dimensional Allen-Cahn equation is translation invariant, the linearized Allen-Cahn operator about $W$ has a kernel spanned by
\[
W^{\prime} (x) = \displaystyle \frac{1}{\sqrt 2} \, \frac{1}{\cosh^2 \left( x/\sqrt 2 \right)}.
\]
That is
\[
\left(W^{\prime}\right)^{\prime\prime} + ( 1 - 3W^{2}) \, W^{\prime}=0.
\]
This implies that
\[
\varepsilon^2 \, \Delta_g + 1- 3 \, U_\varepsilon^2,
\]
the linearized Allen-Cahn operator about $U_\varepsilon$ might not be invertible but even if it were, it would have a right inverse whose norm will blow up as the parameter $\varepsilon$ tends to $0$.
To overcome this issue, the classical strategy \cite{Pacard, Pac-Role} is to work orthogonally to an approximate kernel which turns out to be the space of functions which can be written as
\[
(s,t) \mapsto \varphi(s) \, W^\prime (t/\varepsilon),
\]
for any $\varphi \in H^1(\gamma)$ in some tubular neighbourhood of $\gamma$.

\medskip

The price to pay is that we have to modify the approximate solution by introducing perturbations of its zero set. Given a (small) function $f : \gamma \to \R$ at least $\mathcal C^2$, we are led to alter the previous approximate solution and consider instead the function
\[
U_\varepsilon (s,t) := W\left( \frac{t - \varepsilon \, f(s)}{\varepsilon}\right),
\]
as a reasonable approximate solution. Indeed, using (\ref{lap-Bel}), we get the structure of the error
\begin{equation}
    \label{eq:6.2}
\begin{array}{rlll}
\displaystyle {\mathcal N}_\varepsilon (U_\varepsilon ) (s,t)
& = & \displaystyle  \varepsilon^2 \, \left( {\mathfrak J}_\gamma f (s) - K_g(s) \, \left(\frac{t - \varepsilon \, f(s)}{\varepsilon}\right)\right) \, W'\left( \frac{t - \varepsilon \, f(s)}{\varepsilon}\right) \\[3mm]
& + & \displaystyle\varepsilon^2 \,  (\partial_s f)^2 \, W'' \left( \frac{t - \varepsilon \, f(s)}{\varepsilon}\right)\\[3mm]
& + & \displaystyle\varepsilon^2 \, \left( \mathcal O \left(\frac{t^2}{\varepsilon} \right) + \mathcal O (t^2) \, \partial_s f + \mathcal O (t^2) \, \partial^2_s f\right) W' \left( \frac{t - \varepsilon \, f(s)}{\varepsilon}\right)\\[3mm]
& + & \displaystyle\varepsilon^2 \, \mathcal O (t^2) \,  (\partial_s f)^2 \, W'' \left( \frac{t - \varepsilon \, f(s)}{\varepsilon}\right).
\end{array}
\end{equation}
Observe that if we fix $c >0$ small enough and if we require that $\mathcal N_\varepsilon (U_\varepsilon)$ is $L^2(-c,c)$ orthogonal to
\[
t\mapsto  W'\left(\frac{t - \varepsilon \, f(s)}{\varepsilon}\right),
\]
we find that $f$ has to  solve an equation of the form
\[
{\mathfrak J}_\gamma f  = \mathcal O (\varepsilon ) + \varepsilon \, L (f) + \varepsilon \, Q (f) ,
\]
where $L$ is a linear second order differential operator and $Q$ a quadratic second order differential operator in the sense that its Taylor expansion in $f$ vanishes at order $2$.
To derive this equation it is important to use the fact that $x\mapsto x\, W'(x)$ and $x\mapsto W''(x)$ are both $L^2(\R)$ orthogonal to $x\mapsto W'(x)$. The fact that we work on some interval $(-c,c)$ instead of $\R$ is not a real issue since the function $W'$ tends exponentially fast to $0$ at infinity.

\medskip

Therefore, assuming that ${\mathfrak J}_\gamma$ is invertible, one can improve the approximate solution by choosing $f$ to be a solution of the above equation (which can then be solved using a fixed point argument) and this explains why we ask the geodesic $\gamma$ to be non-degenerate.
The existence of solutions of the Allen-Cahn equation having a transition layer which converges with multiplicity one is well known \cite{Pac-Role}, therefore, we will not copy this construction here.
However, it is interesting to compare this construction with the construction of the present paper.
In particular, the above formal computation (which can be made rigorous) governs the size of the function $f$ and the solution of the Allen-Cahn equation can be decomposed in a tubular neighborhood of $\gamma$ as
\[
u_\varepsilon = W\left( \frac{t-\varepsilon \, f}{\varepsilon}\right) + V ,
\]
where $\|V\|_{L^\infty} \leq C\,  \varepsilon^2$ and $\|f\|_{L^\infty} \leq C\, \varepsilon$. As we will see shortly the situation is totally different in the construction of solutions whose transition layers converges with multiplicity $2$ to a given geodesic.

\medskip

Let us now explain how to build solutions of the Allen-Cahn equation whose transition layers converge to a given geodesic with multiplicity $2$. We have in mind to look for solutions whose zero set consists of two components (we assume here that the geodesic is two-sided), each one being a normal graph over $\gamma$ for some small function. This time, a reasonable approximate solution $U_\varepsilon$ has the form
\[
U_\varepsilon (s,t) := W \left(\frac{t-\varepsilon \, f_2(s)}{\varepsilon}\right) - W\left(\frac{t- \varepsilon \,f_1(s)}{\varepsilon}\right) + 1,
\]
where $f_{1} < 0 < f_{2}$. More precisely, we will prove that $\sqrt 2\, f_2$ is close to $\Psi_\varepsilon$ and $\sqrt 2\, f_1$ is close to $- \Psi_\varepsilon$ where $\Psi_\varepsilon$ is a solution of the Jacobi-Toda equation. Hence, the distance between the graph of $\varepsilon\, f_2$ and the graph of $\varepsilon \, f_1$ is of order $\varepsilon\, \alpha_\varepsilon$ and hence is much larger than $\varepsilon$, which is the characteristic size of the transition layers of the solutions of the Allen-Cahn equation. Even though the distance between the two transition layers is much larger than $\varepsilon$, we will see that there will be some interaction between the two transition layers at points where $\Psi_\varepsilon$, the solution of the Jacobi-Toda equation, is minimal.

\medskip

To simplify the notation, let us set
\[
W_j (s,t) := W\left(\frac{t-\varepsilon \, f_{j}(s)}{\varepsilon}\right)  \qquad \text{and} \qquad  W_{j}^{\prime}(s,t) := W^{\prime}\left(  \frac{t-\varepsilon \, f_{j}(s)}{\varepsilon}\right)
\]
with similar notations for higher derivatives.
\medskip

Given a small function $V$, we compute
\begin{equation}
\label{eq:sold}
\begin{array}{rlllll}
\mathcal N_\varepsilon (U_\varepsilon + V) & = & \mathcal N_\varepsilon (W_2) - \mathcal N_\varepsilon (W_1) \\[3mm]
& + & 3 \, (1-W_2^2) \, (1 -W _1) - 3\, (W_2+1) \, (1- W_1)^2\\[3mm]
& + & \varepsilon^2 \, \Delta_g V + V - 3 \, U^2_{\varepsilon} \, V\\[3mm]
& - & 3\,  U_\varepsilon \, V^2 - V^3.
\end{array}
\end{equation}
The expression of $\mathcal N_\varepsilon (W_2)$ and $\mathcal N_\varepsilon (W_1)$ are given by (\ref{eq:6.2}) with $f$ replaced respectively by $f_2$ or $f_1$.

\medskip

As in the case of a single transition layer, the projection of the error $\mathcal N_\varepsilon(U_\varepsilon)$ over the approximate kernel plays an important role in our analysis. This time, the approximate kernel is spanned by the functions
\[
(s,t) \mapsto \varphi_j (s) \, W^{\prime}\left(  \frac{t-\varepsilon \, f_j(s)}{\varepsilon}\right)
\]
for $j=1,2$, where the functions $\varphi_j \in H^1(\gamma)$.

\medskip
For example, the projection of $\mathcal N_\varepsilon$ over $W'_2$ reads
\begin{equation}
\label{eq:6.33}
\begin{array}{rllll}
\displaystyle \int_{[- c , c ]} \mathcal N_\varepsilon (U_\varepsilon) \, W_2' \, dt & = & \displaystyle \varepsilon \, \left( c_{0}\, \varepsilon^2 \, \mathfrak J_\gamma f_2 + c_{1} \, e^{-\sqrt{2} (f_2-f_1)} \right) \\[3mm]
& + & \varepsilon^4 \, \mathcal O (1) + \varepsilon^3 \,  e^{-\sqrt{2} (f_2-f_1)} \, \mathcal O (1) + e^{-2 \sqrt{2} (f_2-f_1)} \, \mathcal O (1) \\[3mm]
& + & \varepsilon^4 \,  L(f_2) +\varepsilon^3 \,  e^{-\sqrt{2} (f_2-f_1)} \, L(f_1)  +  \varepsilon^4 \, Q(f_1, f_2)
,
\end{array}
\end{equation}
where $L$ are second order differential operators and $Q$ a quadratic second order differential operator and where the constants $c_0$ and $c_1$ are explicitly given by
\begin{equation}
c_{0}: =\int_{{\R}}(W^\prime(x))^2 \, dx \qquad \text{and} \qquad c_{1}: =6\int_{{\R}}e^{-\sqrt{2}x} \, \left(1-W^{2}(x)\right) W^{\prime}(x)\, dx.
\label{eq:c0c1}
\end{equation}
The term $\varepsilon^2 \, c_0 \, \, \mathfrak J_\gamma f$ comes from the first term in (\ref{eq:6.2}) while the term $c_{1} \, e^{-\sqrt{2} (f_2-f_1)}$ comes from the $L^2$ projection of $3 \, (1-W_2^2) \, (1 -W _1)$ over $W_2'$ using the fact that
\[
1- W (x) = 2 \, e^{- \sqrt{2} x} + \mathcal O \left(e^{- 2 \sqrt{2} x}\right)
\]
for $x >0$. A similar analysis can be performed for the projection of $\mathcal N_\varepsilon$ over $W'_1$. The result is similar with a change of sign due to the fact that $3 \, (1-W_2^2) \, (1 -W _1)$ is now replaced by $- 3 \, (1-W_1^2) \, (1 + W _2)$.

\medskip

Neglecting the higher order terms, we conclude that $U_\varepsilon$ could be a reasonable approximate solution if we are able to find solutions of the system \[\varepsilon^2 \,  c_0 \, \mathfrak J_\gamma \, f_2 + c_1 \, e^{- \sqrt{2} (f_2-f_1)} = 0 \qquad \text{and} \qquad
\varepsilon^2 \, c_0 \, \mathfrak J_\gamma \, f_1 - c_1 \, e^{- \sqrt{2} \, (f_2-f_1)} = 0.
\]
If we set
\[
\phi_{j} :=  \sqrt{2} \, f_j,
\]
for $j=1, 2$ and $\bar c^2 := \frac{\sqrt{2}c_{1}}{c_{0}}$, the above system becomes
\[
\left\{
\begin{array}{rllll}
\varepsilon^2 \, \mathfrak J_\gamma \, \phi_2 + \bar{c}^2 \, e^{- (\phi_2 - \phi_1)} & =& 0 \\[3mm]
\varepsilon^2 \, \mathfrak J_\gamma \, \phi_1 - \bar{c}^2 \, e^{- (\phi_2-\phi_1)} & = & 0,
\end{array}
\right.
\]
Summing up the two equations, we obtain
\[
\mathfrak J_\gamma \left(\phi_{1}+\phi_{2}\right) =0,
\]
and, if the geodesic $\gamma$ is assumed to be non-degenerate, we conclude that $\phi_2 =- \phi_1$. Therefore, our problem reduces to the Jacobi-Toda equation (\ref{eq:4.1}) which has been studied in Section 4.

\medskip

We now come to the main result of this section. Starting from a non-degenerate embedded, two-sided geodesic $\gamma$ and a non-degenerate Bouncing Jacobi Field $\Phi$ along $\gamma$, we prove the existence of solutions to the Allen-Cahn equation whose nodal set is close to the normal graph over $\gamma$ for the functions $s \mapsto \varepsilon \, \Psi_\varepsilon (s)$, where $\Psi_\varepsilon$ is the solution of the Jacobi-Toda equation associated to $\Phi$, given by Proposition~\ref{pr:4.3}.

\medskip

We keep the notations already used and set
\[
\sqrt 2 \, f_2 : = \Psi_\varepsilon + h_2 \qquad \text{and} \qquad \sqrt 2 \,  f_1 : = - \Psi_\varepsilon + h_1,
\]
where $\Psi_\varepsilon$ is the solution of the Jacobi-Toda equation provided by Proposition~\ref{pr:4.3} starting with the Bouncing Jacobi Field $\Phi$ and where $h_{2}$ and $h_{1}$ are small functions to be determined.

\medskip

We define the approximate solution as
\[
U_\varepsilon (s,t) = \chi (t) \, \left( W \left( \frac{t - \varepsilon \, f_2(s)}{\varepsilon} \right) - W \left( \frac{t -\varepsilon \, f_1 (s)}{\varepsilon} \right) \right) + 1 ,
\]
where $\chi$ is a smooth cutoff function identically equal to $1$ in a tubular neighbourhood of width $c$ around $\gamma$ and  identically equal to $0$ outside a tubular neighbourhood of width $2 \, c$ around $\gamma$, for some $c >0$ fixed small enough but independent of $\varepsilon$.

\medskip

We look for a solution of the Allen-Cahn equation of the form $U_\varepsilon + V$ where $V$ is a solution of
\begin{equation}
\left( \varepsilon^{2}\, \Delta_g  + 1 - 3 U_\varepsilon^2 \right)  \, V + \mathcal N_\varepsilon ( U_\varepsilon) - 3 \, U_\varepsilon \,  V^2 - V^3 =0,
\label{linearfi}
\end{equation}
which satisfies the orthogonality conditions
\begin{equation}
    \label{eq:6.55}
    \int_{(-c,c)} V \, W_1' \, dt = \int_{(-c,c)} V \, W_2' \, dt = 0,
\end{equation}
for all $s \in \gamma$.

\medskip

The next statement involves some weighted norm on the space of $\mathcal C^{2, \alpha}$ function which is defined by
\[
\| V \|_{\mathcal C^{k,\alpha}_\varepsilon} =\sup_i \left( \sum_{j=0}^k \varepsilon^j \, \sup_{\Omega_i} \, |\nabla^j V| + \varepsilon^{k+\alpha} \, \sup_{p\neq q \in \Omega_i} \frac{|\nabla^k V(p) - \nabla^k V(q)|}{|p-q|^{\alpha}}\right),
\]
where $(\Omega_i)_{i \in I}$ is a finite open cover of $M$. In other words, with this definition, in a local chart, the $L^\infty$ norm of any $j$-th partial derivatives of $f$ is controlled by $\varepsilon^{-j} \, \| f \|_{\mathcal C^{k,\alpha}_\varepsilon}$.

\medskip

The main result of this section is the:
\begin{proposition}
\label{pr:6.2}
Let $\gamma$ be a closed, embedded, non-degenerate two-sided geodesic in $(M,g)$. Assume that $\Phi$ is a non-degenerate \emph{Bouncing Jacobi Field} with $n \geq 1$ minimums. Then, for all $\varepsilon$ small enough, the Allen-Cahn equation has a solution $u_{\varepsilon}$ whose transition layers converge to $\gamma$ with multiplicity two. Moreover, $u_\varepsilon = U_\varepsilon + V$, where
\[
\| V \|_{\mathcal C^{2,\alpha}_\varepsilon}\leq C \, \varepsilon^2\, (\ln \varepsilon)^2
\]
and
\[
\| h_1 \|_{\mathcal C^{2,\alpha}} + \| h_2 \|_{\mathcal C^{2,\alpha}} \leq C \, \varepsilon \, (-\ln \varepsilon)^5.
\]
\end{proposition}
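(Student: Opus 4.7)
The proof proceeds by a two-step Lyapunov--Schmidt reduction. In Step~1, for each small pair $(h_1, h_2) \in \mathcal{C}^{2,\alpha}(\gamma)^2$, I would solve (\ref{linearfi}) subject to the orthogonality conditions (\ref{eq:6.55}), producing $V = V(h_1, h_2)$. In Step~2, I would enforce $\mathcal{N}_\varepsilon(U_\varepsilon + V) \equiv 0$; because of the orthogonality imposed on $V$, this reduces to a system of two scalar equations on $\gamma$ which determines $(h_1, h_2)$.

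For Step~1, the analytic input is a uniform (in $\varepsilon$) right inverse for the operator $L_\varepsilon := \varepsilon^2 \Delta_g + 1 - 3U_\varepsilon^2$ acting on functions satisfying (\ref{eq:6.55}), in weighted H\"older spaces $\mathcal{C}^{k,\alpha}_\varepsilon$. This is by now standard: away from the two transition layers the operator is strongly coercive since $U_\varepsilon^2 \approx 1$, while near each layer, after rescaling $t$ by $\varepsilon^{-1}$, the operator approaches the one-dimensional operator $-\partial_x^2 + 1 - 3W^2$ whose $L^2$ kernel is spanned by $W'$, which is precisely what (\ref{eq:6.55}) removes; a partition of unity argument assembles a right inverse of $O(1)$ norm. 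The size of $\mathcal{N}_\varepsilon(U_\varepsilon)$ is read off from (\ref{eq:6.2}) and its analogue for $W_1$, using $\|\Psi_\varepsilon\|_{L^\infty} \lesssim \alpha_\varepsilon$ and $\|\partial_s \Psi_\varepsilon\|_{L^\infty} \lesssim \alpha_\varepsilon^2$, and a contraction argument in $\mathcal{C}^{2,\alpha}_\varepsilon$ then yields $V(h_1, h_2)$ depending Lipschitz-continuously on $(h_1, h_2)$.

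For Step~2, projecting the equation $\mathcal{N}_\varepsilon(U_\varepsilon + V) = 0$ onto $W_2'$ and $W_1'$ and using (\ref{eq:6.33}), the identity $\sqrt{2}(f_2 - f_1) = 2\Psi_\varepsilon + (h_2 - h_1)$, and the Jacobi--Toda equation $\varepsilon^2 \mathfrak{J}_\gamma \Psi_\varepsilon + \bar c^2 e^{-2 \Psi_\varepsilon} = 0$, the system takes the form
\[
\varepsilon^2 \mathfrak{J}_\gamma h_2 - \bar c^2 e^{-2\Psi_\varepsilon}(h_2 - h_1) = R_2, \qquad \varepsilon^2 \mathfrak{J}_\gamma h_1 + \bar c^2 e^{-2\Psi_\varepsilon}(h_2 - h_1) = R_1,
\]
where $R_1, R_2$ absorb the higher-order terms (quadratic contributions in $(h_1, h_2)$, terms generated by $V$, and corrections coming from the finite integration range $(-c,c)$). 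Taking sum and difference decouples the system into
\[
\varepsilon^2 \mathfrak{J}_\gamma(h_1 + h_2) = R_1 + R_2, \qquad \mathcal{L}_\varepsilon (h_2 - h_1) = R_2 - R_1,
\]
where the second equation features exactly the linearised Jacobi--Toda operator studied in Section~5. Non-degeneracy of $\gamma$ gives the first operator a right inverse of norm $O(\varepsilon^{-2})$, and Proposition~\ref{pr:5.4} does the same for $\mathcal{L}_\varepsilon$; a fixed-point iteration on $(h_1, h_2)$ in appropriate norms closes the construction.

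The main obstacle is the precise bookkeeping of the logarithmic weights arising from $\alpha_\varepsilon = -\ln\varepsilon + O(\ln(-\ln\varepsilon))$. Both right inverses cost a factor $\varepsilon^{-2}$, so the remainders must be controlled at size $O(\varepsilon^3 (-\ln\varepsilon)^5)$ to yield the claimed bound $\|h_j\|_{\mathcal{C}^{2,\alpha}} \leq C\varepsilon(-\ln\varepsilon)^5$. The delicate point is that near the bouncing points $e^{-2\Psi_\varepsilon}$ concentrates at scale $\varepsilon/\alpha_\varepsilon$, so the weight appearing in $\mathcal{L}_\varepsilon$ must be matched, when inverting, against the exponential rescaling used in the proof of Proposition~\ref{pr:5.4}; this is what ensures the spectral gap is effective in the H\"older norms used here. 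Once $(h_1, h_2)$ are determined, substituting back into Step~1 produces the stated $\mathcal{C}^{2,\alpha}_\varepsilon$-bound on $V$, completing the proof.
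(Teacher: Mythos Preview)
Your proposal is correct and follows essentially the same two--step Lyapunov--Schmidt scheme as the paper: invert $\varepsilon^2\Delta_g+1-3U_\varepsilon^2$ modulo the approximate cokernel to produce $V$, then project onto $W_1',W_2'$ and decouple the resulting system for $(h_1,h_2)$ into the sum (governed by $\mathfrak J_\gamma$) and the difference (governed by $\mathcal L_\varepsilon$), invoking the non-degeneracy of $\gamma$ and Proposition~\ref{pr:5.4} respectively. One small slip: the correct bound is $\|\partial_s\Psi_\varepsilon\|_{L^\infty}\lesssim\alpha_\varepsilon$ (not $\alpha_\varepsilon^2$), which is exactly what is needed so that the term $\varepsilon^2(\partial_s f_2)^2 W''$ in (\ref{eq:6.2}) contributes $O(\varepsilon^2(\ln\varepsilon)^2)$ to $\|V\|_{\mathcal C^{2,\alpha}_\varepsilon}$ as stated.
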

\begin{proof}
We need to find $V : M \to \R$ and the functions $h_1 : \gamma\in \R$ and $h_2: \gamma \to \R$ such that (\ref{linearfi}) and (\ref{eq:6.55}) hold.

\medskip

As explained above, the linear operator
\[
-\left(\varepsilon^{2} \, \Delta_{g} + 1 - 3\, U_{\varepsilon}^2\right)
\]
can be inverted up to functions of the form $(s,t) \mapsto \ell_1 (s) \, \chi (t) \, W_{1}^{\prime}(t)$ and $(s,t) \mapsto \ell_2 (s)\, \chi (t) \, W_{2}^{\prime}(t)$ where $\chi$ is a cutoff function identically equal to $1$ in a  tubular neighborhood of width $c$ about $\gamma$ and identically equal to $0$ outside a tubular neighborhood of width $2c$ about $\gamma$. We refer to \cite{Manuel1, Pacard, Pac-Role} and especially \cite{M0} for more details.

\medskip

This leads to a coupled system of equations
\begin{equation}
\label{eq:6.77}
\begin{array}{rlll}
-\left(\varepsilon^{2} \, \Delta_{g} + 1 - 3\, U_{\varepsilon}^2\right) V  & =& \mathcal{N}_{\varepsilon}\left(W_2\right) -\mathcal{N}_{\varepsilon}\left(W_1\right)  \\[3mm]
& + & 3\, \left(1-W_2^{2}\right)\left(1-W_1\right)  -3\, \left( 1+W_2\right)  \left(1-W_1\right)^{2}\\[3mm]
& - & 3\, U_{\varepsilon}\, V^{2}-V^{3} + \chi \, \left( \ell_2 \, W'_2 + \ell_1\, W'_1\right),
\end{array}
\end{equation}
and
\begin{equation}
\label{eq:6.78}
\left\{
\begin{array}{rllll}
\varepsilon^2 \, \mathfrak J_\gamma h_2 - \bar c^2\,  e^{-2 \Psi_\varepsilon}  \left(
h_2-h_1\right) & =  & \mathcal O (\varepsilon^3\, (\ln \varepsilon)^4) \, + \varepsilon^3  \, (\ln \varepsilon)^4 \, L (h_1,h_2)\\[3mm]
& + & \displaystyle  \varepsilon \, L(V) + Q(h_1,h_2) + \frac{1}{\varepsilon} \, Q(V),\\[3mm]
\varepsilon^2 \, \mathfrak J_\gamma h_1 + \bar c^2 e^{-2\Psi_\varepsilon} \left(
h_2-h_1\right) & = & \mathcal O (\varepsilon^3\, (\ln \varepsilon)^4) + \varepsilon^3 \, (\ln \varepsilon)^4 \, L (h_1,h_2) \\[3mm]
& + & \displaystyle  \varepsilon \, L(V) + Q(h_1,h_2) + \frac{1}{\varepsilon} \, Q(V).\\[3mm]
\end{array}
\right.
\end{equation}

If we define $\xi := h_{1}-h_{2}$ and $\zeta := h_{1} +h_{2}$, we see that the second system of equation translates into
\[
\left\{
\begin{array}{rllll}
\mathcal L_\varepsilon \xi & = & \mathcal O (\varepsilon^3 \, (\ln\varepsilon)^4) + \varepsilon^3 \, (\ln\varepsilon)^4 \, L(\xi , \zeta)+ Q(\xi, \zeta)\\[3mm]
& + & \displaystyle  \varepsilon \, L(V)  + \frac{1}{\varepsilon} \, Q(V)\\[3mm]
\mathfrak J_\gamma \zeta  & = &  \displaystyle \mathcal O (\varepsilon \, (\ln \varepsilon)^4) + \varepsilon \, (\ln\varepsilon)^4  \, L(\xi , \zeta)+ \frac{1}{\varepsilon^2} \, Q(\xi, \zeta)\\[3mm]
& + & \displaystyle  \frac{1}{\varepsilon} \, L(V) +  \frac{1}{\varepsilon^3} \, Q(V).
\end{array}
\right.
\]

Let us now point out the main ingredients in the estimate for $V$. We need to estimate the projection of the right hand side of (\ref{eq:6.77}) on the orthogonal complement of $\chi \, W'_1$ and $\chi \, W'_2$ when $V=0$.
Due to the exponential convergence of $W$ to $\pm1$, it is clear that the main contribution will come from the estimate of this quantity in a tubular neighborhood about $\gamma$.

\medskip

A close inspection of (\ref{eq:6.2}) shows that, in a tubular neighbourhood of the curve determined by the graph of $\varepsilon \, \Psi_\varepsilon$, the terms in $\mathcal N_\varepsilon (W'_2) - \mathcal N_\varepsilon (W'_1)$ which govern the size of $V$ are
\[
\varepsilon^2 \, K_{g}(s) \, \left(\frac{t-\varepsilon\, \Psi_\varepsilon (s)}{\varepsilon} \right) \, W_{2}^{\prime\prime}\left(\frac{t-\varepsilon\, \Psi_\varepsilon (s)}{\varepsilon} \right)
+ \varepsilon^{2} \, \left(
\partial_{s}\Psi_\varepsilon (s)\right)^{2}\, W_{2}^{\prime\prime} \left(\frac{t-\varepsilon\, \Psi_\varepsilon (s)}{\varepsilon} \right).
\]
To evaluate their contribution to the size of $V$, we use Proposition~\ref{pr:4.3} which provides estimates for $\partial_s\Psi$ and we get most $C \, \varepsilon^2 (\ln \varepsilon)^2$.

\medskip

In $3\left(1-W_2^{2}\right)\left(1-W_1\right)  -3\left( 1+W_2\right)  \left(1-W_1\right)^{2}$, the term which governs the size of $V$ is
\[
3 \left(1-W_2^{2}\right)\left(1-W_1\right),
\]
whose contribution to the size of $V$ can be estimated by $C \, e^{-2 \alpha_\varepsilon}$ since $\sqrt 2 \, (f_2-f_1)$ is bounded from below by $2 \, \alpha_\varepsilon - C$.

\medskip

The existence of $V$, $h_1$ and $h_2$ follows from a fixed point argument. Beside the analysis of  which is given in \cite{M0}, invertibility of  $\mathfrak J_\gamma$ is needed and we also need to use Proposition~\ref{pr:5.4}.
\end{proof}

In the next section, we will study the Morse index of the solution we have just constructed. To this aim, we will need some better understanding of the function $V_\varepsilon$. This understanding is based on the following technical lemma part of which has been already used~:
\begin{lemma}
\label{le:6.1}
Let $X, Y$ and $Z$ be the solutions of
\[
\begin{array}{rllll}
\left(\partial_x^2 + 1 - 3 \, W^2 \right) \, X  & = & W'' \\[3mm]
\left(\partial_x^2 + 1 - 3 \, W^2 \right) \, Y &  = & x \, W' \\[3mm]
\left(\partial_x^2 + 1 - 3 \, W^2 \right) \, Z  & = &  6 \, (1-W^2) \, \left( e^{-\sqrt 2 \, x} - \rho \right),
\end{array}
\]
which are $L^2(\R)$ orthogonal to $W'$,
where the constant $\rho$ is determined so that the right-hand-side of the third equation is $L^2(\R)$ orthogonal to $W'$. Then
\[
\begin{array}{rlll}
\displaystyle 6 \, \int_\R W\, (W')^2 \, X \, dx & = & \displaystyle \int_\R (W'')^2 \, dx \\[3mm]
\displaystyle  6 \, \int_\R W\, (W')^2 \, Y\, dx & =- &\displaystyle \frac{1}{2}\, \int_\R (W')^2 \, dx \\[3mm]
\displaystyle 6 \, \int_\R  W\, (W')^2 \, Z \, dx & = & 6 \, \displaystyle \int_\R e^{-\sqrt 2 \, x} \, (1-W^2) \, W''\,  dx  .
\end{array}
\]
\end{lemma}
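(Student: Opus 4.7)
The plan is to reduce all three identities to one pivot observation, namely
\[
L W'' \;=\; 6\, W\, (W')^{2}, \qquad L := \partial_x^2 + (1 - 3 W^2).
\]
This pivot comes from differentiating the translation-invariance relation $L W' = 0$: a one-line computation gives the commutator $[\partial_x, L] = -6 W W'$ (multiplication operator), so
\[
0 \;=\; \partial_x (L W') \;=\; L W'' + [\partial_x, L] W' \;=\; L W'' - 6 W (W')^{2}.
\]

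With the pivot in hand, each integral is evaluated by moving $L$ onto the opposite factor via self-adjointness: for $f \in \{X, Y, Z\}$,
\[
\int_\R 6\, W\, (W')^{2}\, f \, dx \;=\; \int_\R (L W'')\, f\, dx \;=\; \int_\R W''\, (L f) \, dx,
\]
the boundary terms at $\pm \infty$ being zero because $W''$ decays exponentially and $X, Y, Z$ are bounded (in fact exponentially decaying, by the Fredholm alternative for $L$ on $\R$ applied to right-hand sides that are orthogonal to the kernel generator $W'$ and decay rapidly). Substituting the defining equations yields the three identities in turn: for $f = X$ one obtains immediately $\int (W'')^{2}\, dx$; for $f = Y$ one gets $\int x\, W'\, W''\, dx = \tfrac{1}{2} \int x\, \bigl((W')^{2}\bigr)'\, dx = - \tfrac{1}{2}\int (W')^{2}\, dx$ after a further integration by parts; and for $f = Z$ one gets
\[
6 \int_\R W''\, (1-W^{2})\, e^{-\sqrt 2\, x}\, dx \;-\; 6 \rho \int_\R W''\, (1-W^{2})\, dx,
\]
in which the $\rho$ term vanishes by a parity argument: using the heteroclinic ODE $W'' = W^{3} - W = -W(1-W^{2})$, the integrand $W''(1-W^{2}) = -W(1-W^{2})^{2}$ is odd.

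The only substantive step is spotting the pivot identity $L W'' = 6 W (W')^{2}$; once available, the three formulas follow from a single integration by parts combined with a symmetry observation. The only technical point to check carefully is the admissibility of the integration by parts for $Z$: the forcing $6 (1-W^{2})(e^{-\sqrt 2\, x} - \rho)$ is a priori worrisome at $x \to -\infty$ since $e^{-\sqrt 2\, x}$ grows, but the factor $1 - W^{2} \sim 4 e^{2 \sqrt 2\, x}$ dominates, so the forcing, and hence $Z$ itself, decays exponentially at both infinities, making every boundary contribution vanish.
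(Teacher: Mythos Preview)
Your approach is essentially identical to the paper's: the paper differentiates $LA=B$ with respect to $x$, multiplies by $W'$, and integrates, which is exactly your pivot identity $LW''=6W(W')^2$ combined with self-adjointness of $L$ --- the two computations are the same read in opposite directions.

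One small inaccuracy worth flagging: your asymptotic $1-W^2\sim 4e^{2\sqrt 2\,x}$ as $x\to-\infty$ is off by a factor of $2$ in the exponent; the correct behavior is $1-W^2=\operatorname{sech}^2(x/\sqrt 2)\sim 4e^{\sqrt 2\,x}$, so the forcing $6(1-W^2)e^{-\sqrt 2\,x}$ tends to the constant $24$ at $-\infty$ rather than decaying. This does not damage the argument, since $Z$ is then merely bounded (solving $(\partial_x^2-2)Z\approx 24$ asymptotically) while $W''$ and $W'''$ still decay exponentially, so the boundary terms in the integration by parts vanish as you claim.
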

\begin{proof}
Assume that $A : \R \to \R$ is a solution of
\[
\left(\partial_x^2 + 1 - 3 \, W^2 \right)\, A = B,
\]
where $B : \R \to \R$ is a smooth function. Taking the derivative with respect to $x$, we get
\[
\left(\partial_x^2 + 1 - 3 \, W^2 \right)\, \partial_x A - 6 \, W\, W' \, A = \partial_x B,
\]
which we multiply by $W'$ and integrate by parts over $\R$ to conclude that
\[
6 \, \int_\R W \, (W')^2 \, A \, dx =  \int_\R B \, W'' \, dx.
\]
The result then follows easily.
\end{proof}

As a consequence we get the:
\begin{corollary}
\label{co:6.3}
The following estimate holds
\[
\begin{array}{rllll}
\displaystyle 6 \, \int_{(-c,c)}  W_2\, V \, (W_2')^2 \, dt & = & \displaystyle  - \varepsilon^3 \,(\partial_s f_2)^2 \, \int_\R (W'')^2\, dx + \varepsilon^3 \, K_g(s) \, \frac{1}{2}\, \int_\R (W')^2 \, dx \\[3mm]
&  & \displaystyle - 6 \, \varepsilon \, \, e^{-\sqrt 2 \,  (f_2-f_1)}\, \int_\R e^{-\sqrt 2\, x} \, (1-W^2) \, W'' \, dx \\[3mm]
&  & + \mathcal O (  \varepsilon^5 \, (\ln \varepsilon)^4).
\end{array}
\]
\end{corollary}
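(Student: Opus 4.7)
The plan is to pass to scaled Fermi coordinates centered on the upper nodal layer and match the principal forcing on $V$ to the three equations of Lemma~\ref{le:6.1}, then apply the three identities of that lemma to evaluate the integral.

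Concretely, I would introduce $x = (t - \varepsilon f_2(s))/\varepsilon$ in a tubular neighbourhood of $\{t = \varepsilon f_2(s)\}$, so that $W_2 = W(x)$, $W_2' = W'(x)$, $dt = \varepsilon\,dx$, and
\[
6 \int_{(-c,c)} W_2\, V\, (W_2')^2\, dt \;=\; 6\varepsilon \int W(x)\, V\, (W'(x))^2\, dx.
\]
In these coordinates the linearized operator $-(\varepsilon^2\Delta_g + 1 - 3 U_\varepsilon^2)$ reduces at leading order to $-(\partial_x^2 + 1 - 3W^2)$: this uses the expansion (\ref{lap-Bel}) of $\Delta_g$ together with the fact that $1 - W_1$ is exponentially small in this region, so $U_\varepsilon \approx W_2$. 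Using (\ref{eq:6.2}) for $\mathcal N_\varepsilon(W_2)$ and the tail expansion $1 - W_1 = 2\, e^{-\sqrt 2\,(f_2-f_1)}\, e^{-\sqrt 2 x} + \mathcal O(e^{-2\sqrt 2\,(f_2-f_1)})$, the components of the right-hand side of (\ref{eq:6.77}) that are $L^2$ orthogonal to $W'$ are, up to higher-order corrections, a constant multiple of $W''(x)$, a constant multiple of $x\, W'(x)$, and a constant multiple of $(1-W^2)(e^{-\sqrt 2 x} - \rho)$, with coefficients proportional to $\varepsilon^2 (\partial_s f_2)^2$, $\varepsilon^2 K_g(s)$, and $e^{-\sqrt 2\,(f_2-f_1)}$ respectively. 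The components parallel to $W'$ are absorbed into the Jacobi--Toda system (\ref{eq:6.78}) and the Lagrange multiplier $\ell_2 W_2'$ on the right-hand side of (\ref{eq:6.77}), so they do not contribute.

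By linearity of $(\partial_x^2 + 1 - 3W^2)$ restricted to $(W')^\perp$, the principal part of $V$ in this region decomposes as a linear combination of $X$, $Y$, $Z$ from Lemma~\ref{le:6.1} with those three coefficients. Substituting into $6\varepsilon \int W(x)\, V\, (W'(x))^2\,dx$ and applying the three identities of Lemma~\ref{le:6.1} then yields exactly the three explicit main terms in the statement, with the $\varepsilon$ from $dt = \varepsilon\,dx$ producing the $\varepsilon^3$ and $\varepsilon$ prefactors displayed in the corollary.

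The main obstacle is the error bound $\mathcal O(\varepsilon^5 (\ln\varepsilon)^4)$. Several error sources must be controlled uniformly: the subleading terms in (\ref{lap-Bel}) and (\ref{eq:6.2}); the doubly exponential tail of $1 - W_1$; the self-interaction $-3 U_\varepsilon V^2 - V^3$ on the right-hand side of (\ref{eq:6.77}); and the discrepancy between $U_\varepsilon$ and $W_2$ off the upper nodal layer. The logarithmic factors trace back to the bound $|\partial_s f_2| \leq C \alpha_\varepsilon \leq C|\ln \varepsilon|$ supplied by Proposition~\ref{pr:4.3} (producing $(\ln\varepsilon)^2$ in the $X$-coefficient) together with similar logarithmic factors from cross-terms involving $\partial_s \Psi_\varepsilon$; combined with the estimate $\|V\|_{\mathcal C^{2,\alpha}_\varepsilon}\leq C \varepsilon^2 (\ln\varepsilon)^2$ of Proposition~\ref{pr:6.2}, this yields the stated $\mathcal O(\varepsilon^5 (\ln\varepsilon)^4)$, but the bookkeeping must be organized so that no logarithmic factor is lost.
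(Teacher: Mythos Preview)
Your approach is correct and is essentially the same as the paper's: the paper's proof consists of the single remark that the result follows directly from Lemma~\ref{le:6.1} together with the fact that the remainder terms are of order $\varepsilon^5(\ln\varepsilon)^4$, and you have simply spelled out how the three forcing terms in (\ref{eq:6.77}) match the right-hand sides in Lemma~\ref{le:6.1} and why the bookkeeping on the remainders gives the stated bound. Your added detail on the provenance of the logarithmic factors is helpful and consistent with the estimates in Propositions~\ref{pr:4.3} and~\ref{pr:6.2}.
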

\begin{proof}
The result follows directly from Lemma \ref{le:6.1} and the fact that the remainder terms are of the order $\varepsilon^5 \, (\ln \varepsilon)^4$.
\end{proof}
\section{The index of the solutions of the Allen-Cahn equation}
In this section, we study the Morse index of the solutions to the Allen-Cahn equation we have constructed. We will also prove that our solutions are non-degenerate, i.e. that they have nullity equal to $0$.

\begin{proposition}
\label{pr:7.1}
Assume that $\Phi$ is a non-degenerate Bouncing Jacobi Field with $n$ minimums. Then, for all $\varepsilon >0$ close enough to $0$, the solution of the Allen-Cahn equation which has been constructed in Proposition \ref{pr:6.2} has nullity $0$ and Morse Index equal to $n + \text{Ind} \, (\Phi ) + \text{Ind} \, (\gamma)$.
\end{proposition}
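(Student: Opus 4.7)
The Morse index of $u_\varepsilon$ equals the maximal dimension of a subspace on which
\[
\mathcal{Q}_{u_\varepsilon}(w) := \varepsilon \int_M |\nabla w|_g^2 \, \text{dvol}_g + \frac{1}{\varepsilon} \int_M (3 u_\varepsilon^2 - 1) w^2 \, \text{dvol}_g
\]
is negative definite. Since $u_\varepsilon$ converges to $\pm 1$ exponentially fast away from $\gamma$, the potential $3 u_\varepsilon^2 - 1$ is bounded below by a positive constant outside a tubular neighborhood of $\gamma$, so every negative direction is concentrated in such a neighborhood and may be assumed supported there. The plan is to perform a Lyapunov--Schmidt reduction of $\mathcal Q_{u_\varepsilon}$ onto the two-dimensional approximate kernel spanned, in Fermi coordinates, by $W_1'$ and $W_2'$, and to identify the effective bilinear form with the direct sum $\mathfrak J_\gamma \oplus \mathcal L_\varepsilon$. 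The count then follows from the definition of $\text{Ind}(\gamma)$ and from Proposition~\ref{pr:5.1}.

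Concretely, given $w$ supported in the tubular neighborhood, write
\[
w(s,t) = \chi(t)\big(\varphi_1(s) W_1'(s,t) + \varphi_2(s) W_2'(s,t)\big) + w^\perp(s,t),
\]
where $\varphi_j$ are determined by requiring $w^\perp(s,\cdot)$ to be $L^2(-2c,2c)$-orthogonal to $\chi W_1'(s,\cdot)$ and $\chi W_2'(s,\cdot)$ for a.e.~$s$. Because $W'$ is the ground state of $-\partial_x^2 - 1 + 3W^2$ with eigenvalue $0$ and a spectral gap above it, and because $W_1', W_2'$ have essentially disjoint supports (their centers are separated by $\varepsilon\alpha_\varepsilon$), one obtains a coercivity estimate
\[
\mathcal{Q}_{u_\varepsilon}(w^\perp) \geq c\,\Big(\varepsilon\|\nabla w^\perp\|_{L^2}^2 + \varepsilon^{-1}\|w^\perp\|_{L^2}^2\Big),
\]
together with the fact that the cross terms $\mathcal{Q}_{u_\varepsilon}(w-w^\perp,w^\perp)$ are absorbed by this coercivity (using the estimates on $V$ and $f_1,f_2$ from Proposition~\ref{pr:6.2} and Corollary~\ref{co:6.3}). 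This reduces the negative-index count of $\mathcal Q_{u_\varepsilon}$ to the negative-index count of its restriction to the two-dimensional kernel.

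On that kernel, the expansion of $\Delta_g$ in Fermi coordinates (\ref{lap-Bel}), the change of variable $x=(t-\varepsilon f_j)/\varepsilon$, the identity $1-W^2 = \sqrt 2\, W'$, and the expansion of $3 U_\varepsilon^2-1$ around each transition layer (retaining the exponentially small correction $\propto e^{-\sqrt 2(f_2-f_1)}$ due to the second layer) yield, modulo terms of order $\varepsilon^2|\ln\varepsilon|^{-1}$ times the leading terms,
\[
\mathcal{Q}_{u_\varepsilon}(w-w^\perp) = c_0\,\varepsilon^2\!\!\sum_{j=1,2}\!\int_\gamma\!\big((\partial_s\varphi_j)^2-K_g\varphi_j^2\big)ds + c_1\!\int_\gamma e^{-\sqrt 2(f_2-f_1)}(\varphi_2-\varphi_1)^2\,ds,
\]
with $c_0,c_1$ the constants of (\ref{eq:c0c1}). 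Setting $\sigma := \varphi_1+\varphi_2$ and $\tau := \varphi_2 - \varphi_1$, the form diagonalizes into a positive multiple of $Q_\gamma(\sigma)$ plus a positive multiple of the quadratic form of $\mathcal L_\varepsilon$ applied to $\tau$ (using $\bar c^2 = \sqrt 2 c_1/c_0$ and $\sqrt 2(f_2-f_1) = 2\Psi_\varepsilon + \mathcal O(\varepsilon|\ln\varepsilon|^5)$). The number of negative directions of the $\sigma$-part is $\text{Ind}(\gamma)$, and by Proposition~\ref{pr:5.1} the number of negative directions of the $\tau$-part is $n+\text{Ind}(\Phi)$, which adds to the asserted index.

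The main obstacle is the rigorous bookkeeping of the perturbative errors near the singular points $s_j$ of $\Phi$, where the two layers come closest and the interaction term is of the same order as the leading $\varepsilon^2$ terms. Here one must blow up at scale $\varepsilon/\kappa_j$ exactly as in Case~2 of the proof of Lemma~\ref{le:5.3}, match the rescaled Allen-Cahn quadratic form with the rescaled Jacobi-Toda quadratic form, and use Proposition~\ref{pr:5.4} to guarantee a uniform spectral gap of size $\varepsilon^2$ on the effective operator. This uniform gap, larger than all reduction errors, delivers both the index count and the non-degeneracy assertion: any element of the kernel of $\mathbb L_\varepsilon$ would, after reduction, project to a pair $(\sigma,\tau)$ in $\ker \mathfrak J_\gamma \oplus \ker \mathcal L_\varepsilon$, both trivial by the non-degeneracy assumptions on $\gamma$ and on $\Phi$ combined with Proposition~\ref{pr:5.4}.
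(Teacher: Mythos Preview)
Your approach is essentially the paper's: decompose along the approximate kernel spanned by $W_1',W_2'$, use coercivity on the orthogonal complement, and identify the reduced form with $\mathfrak J_\gamma\oplus \mathcal L_\varepsilon$ via the change of variables $\sigma=\varphi_1+\varphi_2$, $\tau=\varphi_2-\varphi_1$. The paper organizes this as a separate lower bound (Lemma~\ref{le:7.3}, with explicit test functions via Lemma~\ref{le:7.2}) and upper bound (Lemma~\ref{le:7.7}, via eigenfunction projection in Lemmas~\ref{le:7.4}--\ref{le:7.6}), but the content is the same.

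Two points to flag. First, a sign: the interaction term in your reduced form should carry a \emph{minus} sign, since the coupling contributes $-2\bar c^2 e^{-2\Psi_\varepsilon}\tau^2$ in the quadratic form of $\mathcal L_\varepsilon$; compare the factor $-(1\mp 1)$ in Lemma~\ref{le:7.2} for the $W_2'-W_1'$ combination.

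Second, and more substantively, the place where your expansion of $\mathcal Q_{u_\varepsilon}(w-w^\perp)$ actually requires care is not the cross terms but the \emph{diagonal} terms. The naive computation of $\int(\mathfrak L_\varepsilon(k_2 W_2'))W_2'\,dt$ produces a contribution $\varepsilon^3(\partial_s f_2)^2 k_2\int_\R (W'')^2$, and since $\partial_s f_2\sim \partial_s\Psi_\varepsilon/\sqrt 2$ is of size $\alpha_\varepsilon$ near the minimums of $\Phi$, this term is of order $\varepsilon^3\alpha_\varepsilon^2$, \emph{larger} than the leading $\varepsilon^3$ Jacobi term. It is precisely Corollary~\ref{co:6.3} (the $6\int W_2 V(W_2')^2$ computation) that cancels this dangerous term against the $V$-interaction, leaving only the clean $c_0\,\varepsilon^2\mathfrak J_\gamma$ plus the genuine $\bar c^2 e^{-2\Psi_\varepsilon}$ coupling; see Lemma~\ref{le:7.4}. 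You invoke Corollary~\ref{co:6.3} only for absorbing cross terms with $w^\perp$, but it is essential for the reduced form itself to take the shape you wrote. Once this cancellation is made explicit, your sketch is complete and matches the paper.
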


The proof of this Proposition is decomposed into two parts. First, we show that the linearized operator
\[
\mathfrak L_\varepsilon : = -\left( \varepsilon^2 \, \Delta_g + 1 - 3 \, u_\varepsilon^2 \right),
\]
has Morse index at least equal to $n+\text{Ind} (\Phi)+ \text{ind} (\gamma)$ and then we prove that it has nullity $0$ and Morse index at most equal to $n+\text{Ind} (\Phi)+ \text{ind} (\gamma)$.

\medskip

We define
\[
\mathfrak Q_\varepsilon (\phi) := \displaystyle \int_M \left(\varepsilon^2 \, |\nabla \phi|_g^2 - (1 - 3\, u_\varepsilon^2) \, \phi^2 \right) \, \text{dvol}_g.
\]
Observe that, close to $\gamma$, we have, in Fermi coordinates,
\[
|\nabla \phi|_g^2 = (\partial_t\phi)^2 + \left(1 + K_g(s) \, t^2 + \mathcal O (t^3)\right)\, (\partial_s\phi)^2,
\]
and
\[
\text{dvol}_g = \left(1 - K_g(s)\, \frac{t^2}{2} + \mathcal O (t^3)\right) \, ds \, dt.
\]

We start with the following technical Lemma:
\begin{lemma}
\label{le:7.2}
Assume that $k : \gamma \to \R$ is a smooth function and
\[
\phi (s,t) = \chi (t) \, \left(W'_2 (s,t) \pm W'_1(s,t) \right) \, k(s),
\]
which is extended by $0$ away from the support of $\chi$. Then
\[
\begin{array}{rlll}
\mathfrak Q_\varepsilon (\phi) & = & \displaystyle 2 \, \varepsilon \, \left(\int_\R (W')^2 \, dx \right) \, \left( \int_\gamma \varepsilon^2 \, \left( (\partial_s k)^2 - K_g \, k^2 \right) \, ds \right) \\[3mm]
& - & \displaystyle  (1\mp 1) \, 2 \, \varepsilon\,  \left(6 \, \int_\R e^{-\sqrt 2 \, x} \, (1-W^2)\, W' \, dx \right) \, \left( \int_\gamma  e^{-\sqrt{2} (f_2-f_1)} \, k^2 \, ds \right) \\[3mm]
& + & \mathcal O (\varepsilon^4 \, (-\ln \varepsilon)^3).
\end{array}
\]
\end{lemma}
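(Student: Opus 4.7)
The plan is to expand $\mathfrak Q_\varepsilon(\phi)$ in Fermi coordinates around $\gamma$ and reduce it to explicit one-dimensional integrals in the transverse variable. Writing $\phi=k(s)\,v(s,t)$ with $v:=\chi(t)(W'_2\pm W'_1)$ and exploiting that $k$ depends only on $s$, I first expand $\varepsilon^2|\nabla\phi|_g^2 = \varepsilon^2 k^2|\nabla v|_g^2 + 2\varepsilon^2 k v g^{ss}\partial_s k\,\partial_s v + \varepsilon^2 v^2 g^{ss}(\partial_s k)^2$, integrate by parts in $s$ on the cross term (closed geodesic, no boundary), and combine with the IBP identity for $\int k^2 \varepsilon^2|\nabla v|_g^2\,dv_g$. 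The resulting $k\,\partial_s^2 k$ contributions cancel pairwise, yielding
\[
\mathfrak Q_\varepsilon(\phi) = \int_M k^2\, v\,\mathfrak L_\varepsilon v\,dv_g + \varepsilon^2\int_M (\partial_s k)^2\, v^2\, g^{ss}\,dv_g + \mathcal O(e^{-c/\varepsilon}),
\]
the error coming from derivatives of the cutoff $\chi$ and the exponential decay of $W'$.

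For the second integral, expand $v^2=(W'_2)^2+(W'_1)^2\pm 2 W'_1 W'_2$ and use $\int (W'_j)^2\,dt=\varepsilon\int_\R (W')^2\,dx$ together with $\int W'_1 W'_2\,dt=\mathcal O(\varepsilon\,e^{-\sqrt 2(f_2-f_1)})$ (from the asymptotic $W'(y)\sim 2\sqrt 2\, e^{-\sqrt 2|y|}$); the volume expansion $g^{ss}\,dv_g=(1+\tfrac{1}{2}K_g t^2+\mathcal O(t^3))\,ds\,dt$ contributes at most $\mathcal O(\varepsilon^5\alpha_\varepsilon^2)$ since $|t|\lesssim\varepsilon\,\alpha_\varepsilon$ in the support of $v$. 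This produces $2\varepsilon^3\bigl(\int_\R (W')^2\,dx\bigr)\int_\gamma (\partial_s k)^2\,ds$ modulo acceptable errors.

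For the main integral I would compute $\mathfrak L_\varepsilon W'_j$ using the Fermi expansion of $\Delta_g$ and the ODE $W'''+(1-3W^2)W'=0$: the $\mathcal O(1)$ part cancels and
\[
\mathfrak L_\varepsilon W'_j = \varepsilon K_g t\, W''(x_j) - \varepsilon^2(\partial_s f_j)^2 W'''(x_j) + \varepsilon^2(\partial_s^2 f_j)\,W''(x_j) + 3(U_\varepsilon^2-W_j^2)W'_j + R_j,
\]
with $R_j$ a smaller Fermi remainder. Using $\int xW'W''\,dx=-\tfrac12\int(W')^2\,dx$, each diagonal pairing $\int W'_j\cdot\varepsilon K_g t\, W''(x_j)\,dt$ contributes $-\tfrac{\varepsilon^3 K_g}{2}\int(W')^2\,dx$; combined with the off-diagonal pairings $\pm 2\int W'_{3-j}\,\mathfrak L_\varepsilon W'_j\,dt$ and the $K_g t^2$ correction in $g^{ss}$, this accounts for the full $-2\varepsilon^3\int(W')^2\,dx\cdot K_g k^2$ contribution in the statement. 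The interaction $3(U_\varepsilon^2-W_j^2)W'_j$, using $U_\varepsilon=W_2-W_1+1$ with $1-W_1\sim 2 e^{-\sqrt 2 x_1}$ near the upper layer (symmetrically near the lower), the identity $W(W')^2=-\tfrac{1}{\sqrt 2}W''W'$, and IBP, yields the $c_1 e^{-\sqrt 2(f_2-f_1)}$ piece; in the $+$ case the off-diagonal interactions cancel the diagonal ones exactly, while in the $-$ case they add, producing the factor $(1\mp 1)$.

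The principal obstacle is showing that the formally large $(\partial_s f_j)^2 W'''(x_j)$ diagonal contribution, of size $\varepsilon^3\alpha_\varepsilon^2$ (since $\partial_s f_j\sim\alpha_\varepsilon\,\partial_s\Psi_\varepsilon$), in fact falls within the stated error $\mathcal O(\varepsilon^4\alpha_\varepsilon^3)$. Making this precise requires coupling the $(\partial_s f_j)^2$ and $(\partial_s^2 f_j)$ terms via the Jacobi-Toda equation satisfied by $\Psi_\varepsilon$ and carefully analyzing the off-diagonal integrals $\int W'(\cdot+D)W^{(n)}(\cdot)\,dx$ for large $D=f_2-f_1$, where the delicate cancellation between the Fermi and interaction contributions becomes visible only after an IBP tying the several pieces together.
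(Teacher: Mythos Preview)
Your overall strategy---integrate by parts to write $\mathfrak Q_\varepsilon(\phi)=\int k^2\,v\,\mathfrak L_\varepsilon v\,\mathrm{d}v_g+\varepsilon^2\int(\partial_s k)^2 v^2 g^{ss}\,\mathrm{d}v_g$ and then expand $\mathfrak L_\varepsilon W'_j$ in Fermi coordinates---is exactly the route the paper takes (it simply quotes Lemma~7.4, which packages this computation). The diagonal and off-diagonal pairings you outline are also correct in structure.

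However, there is a genuine gap in how you handle the term you flag as the ``principal obstacle''. You write the potential in $\mathfrak L_\varepsilon$ as $3(U_\varepsilon^2-W_j^2)$, but $\mathfrak L_\varepsilon$ is the linearization about the \emph{exact} solution $u_\varepsilon=U_\varepsilon+V$, so the potential is $3(u_\varepsilon^2-W_j^2)$. The extra piece $6\,U_\varepsilon V\,W'_j$ contributes, after pairing with $W'_j$, the quantity $6\int W_j V\,(W'_j)^2\,dt$; by Corollary~6.3 (which rests on Lemma~6.1) this equals $-\varepsilon^3(\partial_s f_j)^2\int(W'')^2\,dx+\varepsilon^3\,\tfrac{K_g}{2}\int(W')^2\,dx-6\varepsilon\,e^{-\sqrt2(f_2-f_1)}\int e^{-\sqrt2 x}(1-W^2)W''\,dx$ up to $\mathcal O(\varepsilon^5(\ln\varepsilon)^4)$. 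The first summand \emph{exactly} cancels the dangerous $\varepsilon^3(\partial_s f_j)^2\int(W'')^2\,dx$ coming from $-\varepsilon^2(\partial_s f_j)^2 W'''(x_j)$ paired with $W'(x_j)$.

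Your proposed resolution---coupling $(\partial_s f_j)^2$ with $\partial_s^2 f_j$ through the Jacobi--Toda equation, or extracting a cancellation from the off-diagonal integrals---cannot work: the $\partial_s^2 f_j\,W''$ term disappears in the diagonal pairing because $\int W''W'\,dx=0$, and the off-diagonal pairings carry an extra factor $e^{-\sqrt2(f_2-f_1)}=\mathcal O(\varepsilon^2(\ln\varepsilon)^2)$, far too small to absorb an $\varepsilon^3(\ln\varepsilon)^2$ discrepancy. The cancellation is genuinely mediated by $V$, and without invoking it your computation stalls at an error of order $\varepsilon^3(\ln\varepsilon)^2$, strictly larger than the claimed $\mathcal O(\varepsilon^4(\ln\varepsilon)^3)$.
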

\begin{proof}
The proof is a simple consequence of Lemma~\ref{le:7.4} and an integration by parts. Observe that the error is not as good as in this Lemma because the term $\mathcal O (t^3)$ in the expansion of the volume form.
\end{proof}

Thanks to this technical result, we are able to prove the:
\begin{lemma}
\label{le:7.3}
For all $\varepsilon$ small enough,
$\mathfrak L_\varepsilon$ has Morse index \emph{at least} equal to $n + \text{Ind}\, (\Phi)+ \text{Ind}\, (\gamma)$.
\end{lemma}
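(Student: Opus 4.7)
My plan is to exhibit an $(\text{Ind}(\gamma) + n + \text{Ind}(\Phi))$-dimensional subspace of $H^1(M)$ on which $\mathfrak Q_\varepsilon$ is negative definite, by lifting two independent families of negative directions of one-dimensional operators on $\gamma$ to $M$ via the two cases of Lemma~7.2. For the first family, I take a basis $k_1, \ldots, k_{\text{Ind}(\gamma)}$ of the negative subspace of $\mathfrak J_\gamma$ and set $\phi^+_i := \chi(t)(W_2'+W_1')k_i(s)$. By the case of Lemma~7.2 in which the coefficient $(1\mp 1)$ vanishes, $\mathfrak Q_\varepsilon(\phi^+_i) = 2\varepsilon c_0\cdot \varepsilon^2 Q_\gamma(k_i) + O(\varepsilon^4(-\ln\varepsilon)^3)$, which is strictly negative on this fixed finite-dimensional space for $\varepsilon$ small.

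For the second family, I mimic the construction of Lemma~5.2 to produce $n + \text{Ind}(\Phi)$ test functions $w_1,\ldots,w_{n+\text{Ind}(\Phi)}$ in $H^1(\gamma)$ on which the reduced quadratic form
\[
\mathcal R_\varepsilon(k) := \varepsilon^2\!\int_\gamma\!\big((\partial_s k)^2 - K_g k^2\big)\,ds \;-\; \sqrt 2\,\bar c^2 \!\int_\gamma\! e^{-2\Psi_\varepsilon} k^2\,ds
\]
appearing in the other case of Lemma~7.2 is negative: the $n$ Toda cutoffs $e_j(s) := \chi(s-\bar s_j)/\cosh(\kappa_j(s-\bar s_j)/\varepsilon)$ provide $n$ negative directions (the rescaled integral $\int(\sinh^2 y - \sqrt 2)/\cosh^4 y\,dy$ is strictly negative), and $\text{Ind}(\Phi)$ desingularized eigenfunctions of the Bouncing-Jacobi eigenvalue problem of Definition~3.2 provide the remaining ones. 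The corresponding lifts $\phi^-_j := \chi(W_2'-W_1') w_j$ then satisfy $\mathfrak Q_\varepsilon(\phi^-_j) = 2\varepsilon c_0 \cdot \mathcal R_\varepsilon(w_j) + O(\varepsilon^4(-\ln\varepsilon)^3) < 0$. Finally, the $\phi^+_i$'s and $\phi^-_j$'s together are linearly independent: the two layers sit at mutual distance $\varepsilon\alpha_\varepsilon \gg \varepsilon$, so $\chi W_2'$ and $\chi W_1'$ have essentially disjoint supports in $t$; writing any vanishing combination as $\chi W_2'(\sum a_i k_i + \sum b_j w_j) + \chi W_1'(\sum a_i k_i - \sum b_j w_j)$ and restricting successively near each individual layer forces $\sum a_i k_i \pm \sum b_j w_j = 0$ on $\gamma$, hence $\sum a_i k_i = \sum b_j w_j = 0$, and independence within each family concludes.

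The hard part of this program is the construction of the $\text{Ind}(\Phi)$ Bouncing-Jacobi directions in the second family: because $\mathcal R_\varepsilon$ carries the coefficient $\sqrt 2\,\bar c^2$ rather than the $2\bar c^2$ appearing in $\mathcal L_\varepsilon$, a first-order perturbation argument starting from the eigenfunctions of $\mathcal L_\varepsilon$ would naively shift the Bouncing-Jacobi eigenvalues upward by $O(\varepsilon^2\alpha_\varepsilon)$, potentially into the positive spectrum. One must therefore rerun the desingularization of Lemma~5.2 directly for $\mathcal R_\varepsilon$, carefully tracking the cancellation in each Toda region between the kinetic contribution $\varepsilon^2(\partial_s\tilde\eta)^2$ and the potential contribution $\sqrt 2\,\bar c^2 e^{-2\Psi_\varepsilon}\tilde\eta^2$ -- which relies on $\partial_s\Psi_\varepsilon$ being an approximate kernel element of the associated operator up to a controlled error -- to confirm that only an $O(\varepsilon^2/\alpha_\varepsilon)$ residual survives and the $\varepsilon^2 \lambda_{BJF}$ leading term still dominates.
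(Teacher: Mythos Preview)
Your approach is the same as the paper's: lift the negative eigenfunctions of $\mathfrak J_\gamma$ via $\chi(W_2'+W_1')k$ to get $\text{Ind}(\gamma)$ directions, and lift the test functions $e_j,\tilde\eta$ of Lemma~5.2 via $\chi(W_2'-W_1')k$ to get the remaining $n+\text{Ind}(\Phi)$, then check linear independence from the near-disjointness of the two layers.

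You are in fact more careful than the paper on one point it elides. The reduced form coming out of Lemma~7.2 in the ``$-$'' case does carry the coefficient $\sqrt 2\,\bar c^2$ rather than the $2\bar c^2$ of $\mathcal L_\varepsilon$, so the desingularized $\tilde\eta$'s of Lemma~5.2 --- which were tuned so that $\partial_s\Psi_\varepsilon$ and $\partial_\kappa T_\kappa$ lie in the approximate kernel of $\mathcal L_\varepsilon$ --- do not literally transfer; as you compute, the Toda-region mismatch is of order $\varepsilon^2\alpha_\varepsilon$ and would swamp the $\varepsilon^2\lambda_{BJF}$ main term. The paper's proof simply quotes Lemma~5.2 and moves on. Your plan to rerun the desingularization directly for $\mathcal R_\varepsilon$ is the correct fix and goes through unchanged: the Toda model $-\partial_s^2-\sqrt 2/\cosh^2 s$ still has exactly one negative bound state, and the limiting Bouncing-Jacobi eigenvalue problem away from the $s_j$'s is independent of this coefficient, so the index count $n+\text{Ind}(\Phi)$ persists.
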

\begin{proof}
We consider $k$ to be one of the functions $e_j$ and $\tilde \eta$ which have been constructed in the proof of Lemma~\ref{le:5.2}.
Then, the functions
\[
\phi (s,t) := \chi (t) \, \left(W'_2 (s,t) - W'_1(s,t) \right) \, k(s).
\]
provide a $n + \text{Ind} (\Phi)$ dimensional space over which  the quadratic form $\mathfrak Q_\varepsilon$ is negative definite, provided $\varepsilon$ is small enough.
\medskip

Now, if we assume that $k$ is one of the nonzero eigenfunction of $\mathfrak J_\gamma$ associated to a negative eigenvalue. We set
\[
\psi (s,t) := \chi (t) \, \left(W'_2 (s,t)  + W'_1(s,t) \right) \, k(s).
\]
As above, we get that the quadratic form $\mathfrak Q_\varepsilon$ is negative on the vector space spanned by these functions and this provides a $\text{Ind} (\gamma)$ dimensional space over which the quadratic form $\mathfrak Q_\varepsilon$ is negative definite, provided $\varepsilon$ is small enough.

\medskip

Since the above functions are linearly independent and this implies that  the linearized operator $\mathfrak L_\varepsilon$ has Morse index  at least equal to $n+\text{Ind} (\Phi)+ \text{Ind} (\gamma)$.
\end{proof}

\medskip

We keep the notations we have used in the previous section and we consider an eigenfunction $\phi$ of
\[
\mathfrak L_\varepsilon := - \left( \varepsilon^{2} \, \Delta_g + 1 - 3 \, u_\varepsilon^2 \right),
\]
the linearized operator at $u_\varepsilon$, associated to an eigenvalue $\lambda \leq 0$. Hence
\[
\mathfrak L_\varepsilon \, \phi = \lambda \, \phi.
\]
We assume the $L^\infty$ norm of $\phi$ equals $1$. In a fixed, small tubular neighborhood of size $2c$ about $\gamma$, we decompose the eigenfunction $\phi$ as
\begin{equation}
\label{eq:7.1}
\phi (s,t) = k_2 (s) \, W_2^{\prime} (s,t) + k_1 (s) \, W_1^{\prime}(s,t) + \eta (s,t),
\end{equation}
with
\[
\int_{(-c,c)} \eta (s,t)  \, W_2^{\prime} (s,t)\, dt  = \int_{(-c,c)} \eta (s,t) \, W_1^{\prime} (s,t)\, dt ,
\]
to address the uniqueness question of the decomposition.

\medskip

Observe that,
\[
\varepsilon^2 \, \Delta_g \phi - (3\, u_\varepsilon^2-1 -\lambda) \, \phi =0.
\]
Since $\lambda \leq 0$ and since, for any given $\delta >0$, $u_\varepsilon > 1- 2\delta/3$ away from the transition layers, which are close to the graphs of $\varepsilon\, f_2$ and $\varepsilon\, f_1$, the potential $3\, u_\varepsilon^2-1 -\lambda \geq 2-2\delta$ we conclude that $\phi$ tends to $0$ exponentially fast, as a function of the distance to the transition layers, like
\[
(s,t) \mapsto \left( W'_2 (s,t) \right)^{\sqrt{1-\delta}} + \left( W'_1 (s,t)\right)^{\sqrt{1-\delta}}.
\]

To get more insight on the eigenfunction $\phi$, we project the equation $\mathfrak L_\varepsilon \phi =-\lambda \phi$ over $W_2'$ and $W_1'$ and over the orthogonal complement of these functions. We start with the projection over $W_2'$.
\begin{lemma}
\label{le:7.4}
The following estimates hold:
\[
\begin{array}{rlllll}
\displaystyle \int_{(-c,c)}  \mathfrak L_\varepsilon (k_2\, W_2')\, W_2' \, dt  =  \varepsilon \,  c_0 \, \left( \varepsilon^2 \, \mathfrak J_\gamma + \bar c^2 \, e^{- 2 \Psi_\varepsilon} \right) \, k_2 + \varepsilon^5\, (\ln \varepsilon)^4  \, L (k_2),
\end{array}
\]
and
\[
\begin{array}{rlllll}
\displaystyle \int_{(-c,c)}  \mathfrak L_\varepsilon (k_1\, W_1')\, W_1' \, dt  =  \varepsilon \,  c_0 \, \left( \varepsilon^2 \, \mathfrak J_\gamma + \bar c^2 \, e^{- 2 \Psi_\varepsilon} \right) \, k_1 + \varepsilon^5\, (\ln \varepsilon)^4  \, L (k_2),
\end{array}
\]
where $L$ are second order linear operators with bounded coefficients and where the constants $c_0$ and $c_1$ have been defined in (\ref{eq:c0c1}).
\end{lemma}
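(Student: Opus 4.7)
The plan is to compute the projection directly by expanding in the stretched Fermi variable $y = (t - \varepsilon f_2(s))/\varepsilon$. The first step is to use the one-dimensional identity $W''' + (1 - 3 W^2) W' = 0$ (from differentiating the 1D Allen-Cahn equation) to rewrite
\[
\mathfrak L_\varepsilon (k_2 W_2') = -\varepsilon^2 [\Delta_g - \partial_t^2] (k_2 W_2') + 3 (u_\varepsilon^2 - W_2^2) k_2 W_2',
\]
since in Fermi coordinates $\varepsilon^2 \partial_t^2(k_2 W_2') = k_2 W_2'''$, so the combination $-\varepsilon^2 \partial_t^2 (k_2 W_2') + (3 W_2^2 - 1) k_2 W_2'$ cancels identically. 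This reduces the calculation to two tasks: (i) handling the geometric corrections of $\Delta_g$ acting on $k_2 W_2'$, and (ii) handling the ``potential correction'' $3(u_\varepsilon^2 - W_2^2) k_2 W_2'$.

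For (i), I use the expansion $\Delta_g - \partial_t^2 = (1 + \mathcal O(t^2)) \partial_s^2 + \mathcal O(t^2) \partial_s - (K_g + \mathcal O(t)) t \partial_t$ from (3.1); computing $\partial_s W_2' = -f_2' W_2''$ and $\partial_t W_2' = W_2''/\varepsilon$, and projecting against $W_2'$, the change of variable $t = \varepsilon(y + f_2)$ together with the elementary integrals $\int(W')^2 dx = c_0$, $\int y W' W'' dy = -c_0/2$, $\int W' W''' dy = -\int(W'')^2 dx$, and the parity vanishing of $\int W' W''$ yields the surviving contributions $-\varepsilon^3 c_0 k_2''$ (from the $k_2''$ piece of $\partial_s^2$), a curvature piece $-\tfrac{1}{2}\varepsilon^3 K_g c_0 k_2$ (from $-K_g t \partial_t$), and a secondary piece $\varepsilon^3 (f_2')^2 \big(\!\int(W'')^2 dx\big) k_2$ (from the $(f_2')^2 W_2'''$ sub-term). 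For (ii), I write $u_\varepsilon = W_2 + \sigma_1 + V$ with $\sigma_1 := 1 - W_1 \approx 2 e^{-\sqrt 2 (f_2 - f_1)} e^{-\sqrt 2 y}$ on the support of $W_2'$, giving $3(u_\varepsilon^2 - W_2^2) = 6 W_2 \sigma_1 + 6 W_2 V + \mathcal O(\sigma_1^2 + V \sigma_1 + V^2)$; the $6 W_2 \sigma_1 k_2 (W_2')^2$ term contributes an explicit $\varepsilon e^{-2\Psi_\varepsilon}$ multiple of $k_2$ by direct integration, while $6 W_2 V k_2 (W_2')^2$ is precisely the quantity evaluated in Corollary 6.3 via Lemma 6.1 applied to the three distinguished solutions $X$, $Y$, $Z$ of the linearised 1D AC operator.

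The delicate bookkeeping of Corollary 6.3 is what drives the needed cancellations: its $-\varepsilon^3 (f_2')^2 \int(W'')^2 dx$ piece cancels exactly the matching $(f_2')^2$ piece from (i); its $\varepsilon^3 K_g c_0/2$ piece combines with the $-\tfrac{1}{2}\varepsilon^3 K_g c_0 k_2$ from (i) to produce the missing $-\varepsilon^3 K_g c_0 k_2$ that completes $\varepsilon c_0 \cdot \varepsilon^2 \mathfrak J_\gamma k_2$; and its interaction piece combines with the $6 W_2 \sigma_1$ contribution --- after an integration by parts using $(1 - W^2) W'' = -2 W (W')^2$ and the definitions $c_1 = 6 \int e^{-\sqrt 2 x}(1 - W^2) W' dx$, $\bar c^2 = \sqrt 2 c_1/c_0$ --- to yield exactly the Toda coefficient $\varepsilon c_0 \bar c^2 e^{-2\Psi_\varepsilon} k_2$ predicted by the linearised Jacobi-Toda system. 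All remaining errors (the $\mathcal O(t^2)$ metric corrections, the $\sigma_1^2$, $V^2$, and $V \sigma_1$ contributions, and higher-order terms in the asymptotic of $1 - W_1$) are controlled by $\varepsilon^5 (\ln \varepsilon)^4$ times a bounded second-order operator on $k_2$, using $|f_j|, |\partial_s f_j| \lesssim \alpha_\varepsilon \sim |\ln \varepsilon|$ from Proposition 4.3 together with the $\mathcal C^{2,\alpha}_\varepsilon$ bounds on $V$ from Proposition 6.2. The projection onto $W_1'$ is entirely parallel with $\sigma_2 := 1 + W_2$ replacing $\sigma_1$. The main difficulty is precisely this triple cancellation: neither the $\varepsilon^3 K_g c_0 k_2$ term nor the $\varepsilon e^{-2\Psi_\varepsilon}$ Toda term is produced by a single mechanism, each emerging only after three contributions of the same size --- from the Laplacian, from the explicit interaction $6 W_2 \sigma_1$, and from $V$ --- are combined in the correct proportions, and the sharpness of the remainder $\varepsilon^5 (\ln \varepsilon)^4 L(k_2)$ is sensitive to sub-leading terms in all three expansions.
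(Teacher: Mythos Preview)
Your approach is essentially identical to the paper's own proof: both use the one-dimensional identity $W'''+(1-3W^2)W'=0$ to reduce $\mathfrak L_\varepsilon(k_2 W_2')$ to the tangential piece $-\varepsilon^2(\Delta_g-\partial_t^2)(k_2 W_2')$ plus the potential correction $3(u_\varepsilon^2-W_2^2)k_2 W_2'$, expand the latter via $u_\varepsilon-W_2=(1-W_1)+V$, invoke Corollary~6.3 for the $6\int W_2 V (W_2')^2\,dt$ contribution, and then combine the interaction pieces through the integration-by-parts identity relating $\int e^{-\sqrt 2 x}(1-W^2)W''$, $\int e^{-\sqrt 2 x}W(W')^2$, and $\int e^{-\sqrt 2 x}(1-W^2)W'$. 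The structure, the order of the steps, the three-way cancellations you highlight, and the source of the error bound $\varepsilon^5(\ln\varepsilon)^4 L(k_2)$ (via Proposition~4.3 and Proposition~6.2) all match the paper.
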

\begin{proof}
We compute, using the expansion of the Laplace-Beltrami  in Fermi coordinates (\ref{lap-Bel}), we get
\[
\begin{array}{rllll}
\mathfrak L_\varepsilon (k_2 W'_2) & = & - \varepsilon^2 \, \left( \partial_s^2 k_2 \, W'_2  - (2 \, \partial_s f_2 \, \partial_s k_2 \,  - \partial_s^2 f_2 \, k_2 )\, W_2'' + (\partial_sf_2)^2 \, k_2 \, W_2''' \right) \\[3mm]
& - & \varepsilon^2 \, \mathcal O (t^2) \left( \partial_s^2 k_2 \, W'_2  - (2 \, \partial_s f_2 \, \partial_s k_2 \,  - \partial_s^2 f_2 \, k_2 )\, W_2'' + (\partial_sf_2)^2 \, k_2 \, W_2''' \right) \\[3mm]
& - & \varepsilon^2 \,  \mathcal O(t^2) \, \left( \partial_sk_2 \, W_2' - \partial_s f_2 \, k_2 \, W_2'' \right)\  + k_2\, W_2''' \\[3mm]
& + & \varepsilon^2 \, \left( K_g + \mathcal O (t)\right)  \,  k_2\, t \, W_2''  + k_2 W_2' - 3 (W_2 - (W_1-1)+ V)^2 \, k_2\,  W_2' \, .
\end{array}
\]
Using the fact that $(W')'' + W' - 3 W^2 \, W'=0$, the above expression simplifies slightly. Then, we can multiply the above expression by $W'_2$ and integrate (by parts) the result over $(-c,c)$. With little work, we find
\[
\begin{array}{rllll}
\displaystyle \int_{(-c,c)} \mathfrak L_\varepsilon (k_2 W'_2) \, W_2'\, dt & = & \displaystyle - \varepsilon^3 \, \partial_s^2 k_2 \, \int_\R (W')^2 \, dx +  \varepsilon^3 \, (\partial_sf_2)^2 \, k_2 \, \int_\R (W'')^2 \, dx \\[3mm]
& - & \displaystyle \varepsilon^3 \, K_g \,  k_2\, \frac{1}{2} \int_\R (W')^2 \, dx +  6 \, k_2 \, \int_{(-c,c)} W_2 \, V \,  (W_2')^2\, dt \\[3mm]
& + & \displaystyle 12 \, \varepsilon \, e^{- \sqrt 2 \, (f_2-f_1)}\, k_2 \, \int_\R e^{- \sqrt 2\, x} \, W \,  (W')^2 \, dx  \\[3mm]
& + & \varepsilon^5\, (\ln \varepsilon)^4  \, L (k_2),
\end{array}
\]
where $L$ is a second order linear operator with bounded coefficients. To obtain this estimate, we have implicitly used the fact that
\[
e^{-\sqrt 2\,  (f_2-f_1)}  = \mathcal O (\varepsilon^2 \, (\ln \varepsilon)^2)
\]
which follows from the construction of $f_2$ and $f_1$.
Finally, one can use Corollary~\ref{co:6.3} together with the fact that
\[
\displaystyle \int_\R e^{-\sqrt 2 \, x} \, (1-W^2) \, W''\,  dx  \\[3mm]
= \displaystyle 2 \, \int_\R e^{-\sqrt 2 \, x} \, W\, (W')^2\,  dx + \sqrt 2 \, \int_\R e^{-\sqrt 2 \, x} \, (1-W^2)\, W' \, dx,
\]
to conclude that
\[
\begin{array}{rllll}
\displaystyle \int_{(-c,c)} \mathfrak L_\varepsilon (k_2 W'_2) \, W_2'\, dt & = & \displaystyle - \varepsilon^3 \, \partial_s^2 k_2 \, \int_\R (W')^2 \, dx  - \displaystyle \varepsilon^3 \, K_g \,  k_2\, \frac{1}{2} \int_\R (W')^2 \, dx  \\[3mm]
& + & \displaystyle 6 \, \varepsilon \, e^{- \sqrt 2 \, (f_2-f_1)}\, k_2 \, \int_\R e^{- \sqrt 2\, x} \, (1-W^2) \, W' \, dx  \\[3mm]
& + & \varepsilon^5\, (\ln \varepsilon)^4  \, L (k_2),
\end{array}
\]
The result follows from the definition of $\bar c^2$ and the fact that
\[
\sqrt 2\, (f_2- f_1) = 2\, \Psi_\varepsilon + \mathcal O (\varepsilon^2 \, (\ln \varepsilon)^2).
\]
This completes the proof of the Lemma.
\end{proof}

Our next task is to perform a similar analysis for the projection of $\mathfrak L_\varepsilon \, (k_1\, W_1')$ over $W'_2$.
\begin{lemma}
\label{pro2}
The following estimates hold:
\[
\displaystyle \int_{(-c,c)}  \mathfrak L_\varepsilon (k_1\, W_1') \, W_2' \, dt  = - \varepsilon \,  c_0  \, \bar c^2 \, e^{- 2 \Psi_\varepsilon} \, k_1  + \varepsilon^5 \, (\ln \varepsilon)^4 \, L (k_1, k_2),
\]
and
\[
\displaystyle \int_{(-c,c)}  \mathfrak L_\varepsilon (k_2\, W_2') \, W_1' \, dt  = \varepsilon \,  c_0  \, \bar c^2 \, e^{- 2 \Psi_\varepsilon} \, k_2  + \varepsilon^5 \, (\ln \varepsilon)^4 \, L (k_1, k_2),
\]
where $L$ are second order linear operators with bounded coefficients.
\end{lemma}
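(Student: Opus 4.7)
The plan is to follow exactly the same template as the proof of Lemma~\ref{le:7.4}, but with the crucial observation that $W_1'$ and $W_2'$ are concentrated near different transition layers separated by distance $\varepsilon(f_2-f_1)\sim \sqrt 2\,\varepsilon\,\Psi_\varepsilon$, so their interaction produces only exponentially small contributions of order $e^{-\sqrt 2 (f_2-f_1)}=e^{-2\Psi_\varepsilon}+\mathcal O(\varepsilon^4(\ln\varepsilon)^4)$. First I would expand $\mathfrak L_\varepsilon(k_1 W_1')$ using the Fermi-coordinate formula~\eqref{lap-Bel} exactly as in Lemma~\ref{le:7.4}, exploiting the identity $W'''+(1-3W^2)W'=0$ so that the ``diagonal'' terms $-\varepsilon^2\partial_t^2(k_1 W_1')$ and $(3W_1^2-1)k_1W_1'$ cancel, leaving
\[
\mathfrak L_\varepsilon(k_1 W_1') = -\varepsilon^2\,\tilde\Delta_g(k_1 W_1') + 3(u_\varepsilon^2-W_1^2)\,k_1 W_1',
\]
where $\tilde\Delta_g$ collects the $\partial_s^2$ and $K_g\, t\,\partial_t$ pieces (plus higher-order geometric terms) and the correction involving $V$ is absorbed into the error.

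Next I would multiply by $W_2'$ and integrate. The $-\varepsilon^2\,\tilde\Delta_g(k_1 W_1')$ piece, when paired with $W_2'$, is controlled by $\int W_1' W_2'\,dt\sim \varepsilon\, e^{-\sqrt 2(f_2-f_1)}$ together with $\partial_s f_1,\partial_s\Psi_\varepsilon=\mathcal O(\ln\varepsilon)$ and $\partial_s^2 f_1=\mathcal O(\ln\varepsilon)$, giving a contribution bounded by $\varepsilon^5(\ln\varepsilon)^4\,L(k_1,k_2)$. The main contribution therefore comes from the potential term $3(u_\varepsilon^2-W_1^2)k_1 W_1' W_2'$. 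In the region where $W_2'$ is not negligible, let $y_2=(t-\varepsilon f_2)/\varepsilon$; then $W_1(t)=W(y_2+(f_2-f_1))$, so the asymptotics $1-W(x)=2e^{-\sqrt 2 x}+\mathcal O(e^{-2\sqrt 2 x})$ and $W'(x)=2\sqrt 2\,e^{-\sqrt 2 x}+\mathcal O(e^{-2\sqrt 2 x})$ for $x\to+\infty$ yield
\[
W_1'\;=\;2\sqrt 2\,e^{-\sqrt 2(f_2-f_1)}e^{-\sqrt 2 y_2}\,(1+\mathcal O(e^{-\sqrt 2 (f_2-f_1)})),
\]
and the algebraic identity $u_\varepsilon^2-W_1^2=(W_2-2W_1+1)(W_2+1)+\mathcal O(V)$ reduces to $-(1-W_2^2)+\mathcal O(e^{-\sqrt 2(f_2-f_1)})$ near $y_2=0$.

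Substituting these expansions and changing variables $t\mapsto y_2$ gives
\[
\int_{(-c,c)} 3(u_\varepsilon^2-W_1^2)\,k_1\,W_1'\,W_2'\,dt \;=\; -6\sqrt 2\,\varepsilon\,k_1\,e^{-\sqrt 2(f_2-f_1)}\!\int_\R e^{-\sqrt 2 y}(1-W^2)W'\,dy + \mathcal O(\varepsilon^5(\ln\varepsilon)^4),
\]
which by the definitions of $c_0,c_1,\bar c$ and the identity $\sqrt 2(f_2-f_1)=2\Psi_\varepsilon+\mathcal O(\varepsilon^2(\ln\varepsilon)^2)$ equals $-\varepsilon c_0\bar c^2 e^{-2\Psi_\varepsilon}k_1$ up to the claimed error. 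The second identity follows by the symmetric computation: near $t\sim\varepsilon f_1$ one has $W_2'=2\sqrt 2\, e^{-\sqrt 2(f_2-f_1)}e^{+\sqrt 2 y_1}(1+\cdots)$ and $u_\varepsilon^2-W_2^2=-(1-W_1^2)+\cdots$, and the substitution $y\mapsto -y$ combined with the parity of $W$ gives $\int(1-W^2)W'e^{\sqrt 2 y}dy=-\int(1-W^2)W'e^{-\sqrt 2 y}dy=-c_1/6$, producing the opposite sign.

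The main technical obstacle, as in Lemma~\ref{le:7.4}, is bookkeeping: one must verify that every subleading correction---those coming from $\mathcal O(t)$ and $\mathcal O(t^2)$ geometric factors, from $(\partial_s f_1)^2 W_1''$ terms, from the second-order Taylor remainder in $u_\varepsilon^2-W_1^2$, and from the perturbation $V$ bounded by Proposition~\ref{pr:6.2}---produces an error of size at most $\varepsilon^5(\ln\varepsilon)^4$ after pairing with $W_2'$. The key uniform smallness $\|V\|_{\mathcal C^{2,\alpha}_\varepsilon}=\mathcal O(\varepsilon^2(\ln\varepsilon)^2)$ and $e^{-\sqrt 2(f_2-f_1)}=\mathcal O(\varepsilon^2(\ln\varepsilon)^2)$ make this routine, so no new difficulty arises beyond careful application of the asymptotic estimates for $W$ and $\Psi_\varepsilon$ already used in the proof of Lemma~\ref{le:7.4}.
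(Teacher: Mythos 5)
Your strategy matches the paper's: use $W'''+(1-3W^2)W'=0$ to reduce to the potential term $3(u_\varepsilon^2-W_j^2)k_jW_j'$, expand near the layer where $W_i'$ peaks, and extract $e^{-\sqrt 2(f_2-f_1)}$ from the asymptotics of $W$ and $W'$. Your derivation of the first identity, including the identification $u_\varepsilon^2-W_1^2=-(1-W_2^2)+\cdots$ near $y_2=0$ and the resulting factor $-6\sqrt 2\,\varepsilon k_1 e^{-\sqrt 2(f_2-f_1)}\int e^{-\sqrt 2 y}(1-W^2)W'\,dy$, is correct.

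The sign change in your second identity, however, is not justified by the argument you give. You assert that ``the substitution $y\mapsto-y$ combined with the parity of $W$ gives $\int(1-W^2)W'e^{\sqrt 2 y}\,dy=-\int(1-W^2)W'e^{-\sqrt 2 y}\,dy$.'' This is false: $W$ is odd, so both $1-W^2$ and $W'$ are \emph{even}, hence $(1-W^2)W'$ is an even function, and the change of variables yields $\int_\R(1-W^2)W'e^{\sqrt 2 y}\,dy=\int_\R(1-W^2)W'e^{-\sqrt 2 y}\,dy=c_1/6$ --- equality, not a sign reversal. Carrying out your own computation with the corrected parity produces $\int_{(-c,c)}\mathfrak L_\varepsilon(k_2W_2')W_1'\,dt=-\varepsilon c_0\bar c^2 e^{-2\Psi_\varepsilon}k_2+\cdots$, the \emph{same} sign as the first identity. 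That the two off-diagonal coefficients must agree to leading order is in fact forced by the (near-)self-adjointness of $\mathfrak L_\varepsilon$, which makes the pairing $(k_1,k_2)\mapsto\bigl(\int\mathfrak L_\varepsilon(k_1W_1')W_2',\int\mathfrak L_\varepsilon(k_2W_2')W_1'\bigr)$ a symmetric bilinear form. So there is a genuine gap: your parity argument cannot deliver the opposite sign, and you should re-examine whether the relative sign displayed in the second identity is compatible with the decomposition $\phi=k_2W_2'+k_1W_1'+\eta$ used in this section.
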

\begin{proof}
The structure of $\mathfrak L_\varepsilon (k_1\, W_1')$ is similar to the structure we have obtained  in the previous Lemma for $\mathfrak L_\varepsilon (k_2\, W_2')$, exchanging the indices $1$ and $2$. As above we multiply the expression of $\mathfrak L_\varepsilon (k_1\, W_1')$ by $W'_2$ and integrate the result over $(-c,c)$. Using the fact that $(W'')'+ W'- 3\, W^2 \, W'=0$, we  get
\[
\begin{array}{rllll}
\displaystyle \int_{(-c,c)} \mathfrak L_\varepsilon (k_1 W'_1) \, W_2'\, dt & = & \displaystyle  3 \, k_1 \, \int_{(-c,c)}  \left( \left( W_1 - (1+W_2) -V \right)^2 - W_1^2\right) \, W'_1\, W_2'\, dt \\[4mm]
& + & \varepsilon^5\, (\ln \varepsilon)^4  \, L (k_1, \partial_s k_1, \partial_s^2 k_1),\\[4mm]
& = & \displaystyle  3 \, k_1 \, \int_{(-c,c)} (1-W_2^2) \, W'_1 \, W_2'\, dt  \\[4mm]
& + & \varepsilon^5\, (\ln \varepsilon)^4  \, L (k_1, \partial_s k_1, \partial_s^2 k_1),
\end{array}
\]
where $L$ changes from one line to the next but still has bounded coefficients, and we conclude that
\[
\begin{array}{rllll}
\displaystyle \int_{(-c,c)} \mathfrak L_\varepsilon (k_1 W'_1) \, W_2'\, dt & = &
\displaystyle  - \varepsilon \, k_1 \, e^{-\sqrt 2\, (f_2-f_1)}\, 6 \, \int_{\R} e^{-\sqrt 2\, x} (1 - W^2) \, W \, W' \, dx  \\[4mm]
& + & \varepsilon^5\, (\ln \varepsilon)^4  \, L (k_1, \partial_s k_1, \partial_s^2 k_1).
\end{array}
\]

A similar analysis can be done for the projection of $\mathfrak L_\varepsilon (k_2\, W'_2)$ over $W'_1$. The computation is exactly the same but one need to be careful that, this time the signs in front of
\[
e^{-\sqrt 2\, (f_2-f_1)}\, 6 \, \int_{\R} e^{-\sqrt 2\, x} (1 - W^2) \, W \, W' \, dx ,
\]
are changed.
\end{proof}

The next result shows that, in some sense, eigenfunctions of $\mathfrak L_\varepsilon$ associated to non-positive eigenvalues are essentially of the form $k_1\, W'_1 + k_2\, W_2'$.
\begin{lemma}
\label{le:7.6}
There exists $\alpha>0$ independend of $\varepsilon$ such that
\[
\| \eta\|_{\mathcal C^{2, \alpha}_\varepsilon} \leq C\, \varepsilon^4 \, (\ln \varepsilon)^4\, \left(
\left\Vert k_{1}\right\Vert_{\mathcal C^{2, \alpha}} +\left\Vert k_{2}\right\Vert_{\mathcal C^{2, \alpha}} \right)  .
\]
\begin{equation}
\label{etaor}
\begin{array}{rllll}
\displaystyle \int_{(-c,c)} \mathfrak L_\varepsilon \eta \, W_2'\, dt&  =&  C \, \varepsilon^4\, (\ln \varepsilon)^4 \, L (k_1, k_2), \\[3mm]
\displaystyle \int_{(-c,c)} \mathfrak L_\varepsilon \eta \, W_1'\, dt & = & C \, \varepsilon^4\, (\ln \varepsilon)^4  \,L (k_1, k_2),
\end{array}
\end{equation}
where $L$ are second order linear operators with bounded coefficients.
\end{lemma}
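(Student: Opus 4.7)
The strategy is a Lyapunov--Schmidt reduction performed fiberwise in $s$. Substituting the decomposition \eqref{eq:7.1} of $\phi$ into $\mathfrak L_\varepsilon \phi = \lambda \phi$ and writing $P$ for the fiberwise $L^2(-c,c)$-projection onto $\mathrm{span}\{W_1',W_2'\}$ (extended by the cutoff $\chi$), the equation governing $\eta$ reads
\[
(\mathfrak L_\varepsilon - \lambda)\,\eta \;=\; -(I-P)\,\mathfrak L_\varepsilon\bigl(k_1 W_1' + k_2 W_2'\bigr)
\]
inside the tubular neighborhood of $\gamma$. Outside this neighborhood $\eta$ coincides with $\phi$, and the exponential decay of eigenfunctions of $\mathfrak L_\varepsilon$ --- coming from the fact that $1 - 3u_\varepsilon^2 \to -2$ there together with $\lambda \leq 0$ --- contributes only terms negligible in $\mathcal C^{2,\alpha}_\varepsilon$.

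The first ingredient is invertibility of $\mathfrak L_\varepsilon - \lambda$ on the range of $I-P$. By the classical Lyapunov--Schmidt theory for the linearized Allen--Cahn operator developed in \cite{M0, Pacard, Pac-Role} and already used in the proof of Proposition~\ref{pr:6.2}, this restricted operator admits a right inverse $\mathcal G_\varepsilon : \mathcal C^{0,\alpha}_\varepsilon \to \mathcal C^{2,\alpha}_\varepsilon$ whose operator norm is bounded independently of $\varepsilon$. The key input is the spectral gap of the one-dimensional operator $-\partial_x^2 + 1 - 3W^2$ above its one-dimensional kernel $\mathrm{span}(W')$; the hypothesis $\lambda \leq 0$ preserves the coercivity.

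The crucial step is to bound the forcing $(I-P)\mathfrak L_\varepsilon(k_j W_j')$ in $\mathcal C^{0,\alpha}_\varepsilon$ by $C\,\varepsilon^4(\ln\varepsilon)^4\bigl(\|k_1\|_{\mathcal C^{2,\alpha}} + \|k_2\|_{\mathcal C^{2,\alpha}}\bigr)$. Expanding $\mathfrak L_\varepsilon(k_j W_j')$ in Fermi coordinates following the proof of Lemma~\ref{le:7.4} yields a sum of terms of the form $A_m(s,t)\,W_j^{(m)}((t-\varepsilon f_j)/\varepsilon)$ for $m=1,2,3$, together with interaction terms arising from $3(u_\varepsilon^2 - W_j^2)k_j W_j'$. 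Those terms proportional to $W_j'$ are annihilated by $I-P$, and those proportional to $W_j''$ and $W_j'''$ have their dangerous $\mathcal O(\varepsilon^2(\ln\varepsilon)^2)\|k_j\|$ contributions neutralized, once multiplied by the appropriate factors $\varepsilon^2(\partial_s f_j)^2$, $\varepsilon^2 K_g\,t$, and $\varepsilon\, e^{-\sqrt 2(f_2-f_1)}$, by the interaction with $V$ via the algebraic identities of Lemma~\ref{le:6.1} and Corollary~\ref{co:6.3}. These are precisely the identities that produced the sharp projection formulae of Lemmas~\ref{le:7.4}--\ref{pro2}; applied here to the orthogonal component $I-P$ rather than to the $W_j'$-projection, they leave a remainder that is $\mathcal O(\varepsilon^4(\ln\varepsilon)^4)$ times a second-order differential operator acting on $(k_1,k_2)$. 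Composing with $\mathcal G_\varepsilon$ gives the announced bound on $\|\eta\|_{\mathcal C^{2,\alpha}_\varepsilon}$.

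The two projection identities in \eqref{etaor} then follow from integration by parts in $t$. Writing $\mathfrak L_\varepsilon = \mathfrak L_\varepsilon^{(t)} + \mathfrak L_\varepsilon^{(s)}$ with $\mathfrak L_\varepsilon^{(t)} := -\varepsilon^2\partial_t^2 + (3u_\varepsilon^2 - 1)$ self-adjoint on each fiber, one has $\mathfrak L_\varepsilon^{(t)} W_j' = 3(u_\varepsilon^2 - W_j^2)\,W_j' = \mathcal O(\varepsilon^2(\ln\varepsilon)^2)$, so that the fiberwise orthogonality $\int \eta\, W_j'\,dt = 0$ combined with the previously established bound on $\|\eta\|$ yields the required $\varepsilon^4(\ln\varepsilon)^4$ smallness, the second-order linear operator $L$ arising from the $\partial_s$-derivatives in $\mathfrak L_\varepsilon^{(s)}$. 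The main obstacle is the third paragraph: isolating the precise algebraic cancellations that improve the naive $\varepsilon^2(\ln\varepsilon)^2$ estimate of the forcing down to $\varepsilon^4(\ln\varepsilon)^4$, which parallels --- at the level of the linearized problem --- the delicate balancing between curvature, squared-derivative and interaction contributions already carried out in the construction of $U_\varepsilon$ and $V$ in Proposition~\ref{pr:6.2}.
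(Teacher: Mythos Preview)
Your overall strategy coincides with the paper's: write $P:=(\mathfrak L_\varepsilon-\lambda)\eta=-(\mathfrak L_\varepsilon-\lambda)(k_1W_1'+k_2W_2')$, split $P$ along and orthogonal to $\mathrm{span}\{W_1',W_2'\}$, invert uniformly on the orthogonal complement, and deduce \eqref{etaor} from the preliminary identity $\int_{(-c,c)}(\mathfrak L_\varepsilon-\lambda)\eta\,W_j'\,dt=\varepsilon^3 L(\eta)$ coming from fiberwise self--adjointness and the orthogonality of $\eta$. Your last paragraph and the paper's first displayed line in the proof are exactly this computation, and the paper likewise closes by writing $(\mathfrak L_\varepsilon-\lambda)\eta=\varepsilon^4(\ln\varepsilon)^4 L(k_1,k_2)+\varepsilon L(\eta)$ and absorbing the last term.

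The gap is in your third paragraph, where you claim the \emph{pointwise} bound $(I-P)\mathfrak L_\varepsilon(k_jW_j')=\mathcal O(\varepsilon^4(\ln\varepsilon)^4)$ in $\mathcal C^{0,\alpha}_\varepsilon$ ``via the algebraic identities of Lemma~\ref{le:6.1} and Corollary~\ref{co:6.3}\dots\ applied here to the orthogonal component $I-P$''. Those identities are \emph{integral} identities---specific values of $\int_\R W(W')^2X\,dx$ and the like---which arise precisely because one is projecting \emph{onto} $W'$; they do not encode any pointwise cancellation in the orthogonal direction. Concretely, the term $\varepsilon^2(\partial_sf_2)^2k_2\,W_2'''$ has size $\varepsilon^2(\ln\varepsilon)^2$, and differentiating $(\partial_x^2+1-3W^2)X=W''$ gives $6WW'X+W'''=(\partial_x^2+1-3W^2)X'$, which is neither zero nor a multiple of $W'$; hence the $V$--interaction $6W_2Vk_2W_2'$ does \emph{not} neutralize $W_2'''$ in $(I-P)$. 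The paper does not attempt this pointwise cancellation: it instead invokes Lemmas~\ref{le:7.4}--\ref{pro2}, which already package the $V$--corrections into the $W_j'$--projections of $P$, together with the bootstrap identity above, to control the decomposition of $P$. So while your architecture is right, the specific mechanism you propose for the $\varepsilon^2(\ln\varepsilon)^2\to\varepsilon^4(\ln\varepsilon)^4$ improvement is not the one that works.
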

\begin{proof}
Using the expansion of Laplacian-Beltrami operator and the orthogonality assumption on $\eta$, we obtain
\begin{equation}
\label{etae}
\int_{(-c,c)}\left(  \mathfrak L_{\varepsilon
}\eta-\lambda\eta\right)\,   W_{2}^{\prime} \, dt =  \varepsilon^3 \, L\left(  \eta\right) .
\end{equation}

We set
\[
P := (\mathfrak L_\varepsilon  - \lambda ) \eta = -(\mathfrak L_\varepsilon  -\lambda )\, (k_2\, W'_2 + k_1 \, W'_1).
\]
We then write
\[
\displaystyle \mathfrak L_{\varepsilon}\eta-\lambda\eta=P-\frac{\displaystyle \int_{(-c,c)}P \, W_{2}^{\prime}\, dt}{\displaystyle \int_{(-c,c)} \left(  W_{2}^{\prime}\right)^{2} \, dt} \, W_{2}^{\prime}+\frac{\displaystyle \int_{(-c,c)}PW_{2}^{\prime}\, dt}{\displaystyle \int_{(-c,c)}\left( W_{2}^{\prime}\right)^{2}\, dt} \, W_{2}^{\prime}.
\]
Applying Lemma \ref{le:7.4} and Lemma \ref{pro2}, we get
\[
\displaystyle P-\frac{\displaystyle \int_{(-c,c)} P \, W_{2}^{\prime}\, dt}{\displaystyle \int_{(-c,c)}\left(W_{2}^{\prime}\right)^{2}\, dt} \, W_{2}^{\prime}= \varepsilon^4 \, (\ln \varepsilon)^4  \, L\left(k_{1},k_{2}\right).
\]
Hence we obtain%
\[
(\mathfrak L_{\varepsilon} -\lambda) \, \eta = \mathcal \varepsilon^4\, (\ln \varepsilon)^4 \, L\left( k_{1},k_{2}\right) + \varepsilon \, L\left(
\eta\right)  .
\]
From this we then deduce
\[
\| \eta\|_{\mathcal C^{2, \alpha}_\varepsilon} \leq C\, \varepsilon^4\, (\ln \varepsilon)^4 \,\left(
\left\Vert k_{1}\right\Vert_{\mathcal C^{2, \alpha}} +\left\Vert k_{2}\right\Vert_{\mathcal C^{2, \alpha}} \right)  .
\]
The estimate (\ref{etaor}) then follows from (\ref{etae}).
\end{proof}

Taking these Lemma into account, we find that $k_{1}$ and $k_{2}$ satisfy the system
\[
\left\{
\begin{array}{rllll}
\varepsilon^2 \, \mathfrak J_\gamma k_{1} + 2 \, \bar c^2\,  e^{-2 \Psi_\varepsilon}\,  \left(
k_{1}-k_{2}\right) = \lambda \, k_{1} + \varepsilon^4\, (\ln \varepsilon)^4 \, L (k_1,k_2),\\[3mm]
\varepsilon^2 \, \mathfrak J_\gamma k_{2} - 2 \, \bar c^2 \, e^{-2\Psi_\varepsilon} \, \left(
k_{1}-k_{2}\right) = \lambda \, k_{2} + \varepsilon^4\, (\ln \varepsilon)^4 \, L (k_1,k_2).
\end{array}
\right.
\]
where $L$ are second order linear operators with bounded coefficients.

\medskip

If we define $\xi :=k_{1}-k_{2}$ and $\zeta := k_{1} +k_{2}$, we see that these functions satisfy the system
\begin{equation}
\label{eq:7.2}
\left\{
\begin{array}{rllll}
\mathcal L_\varepsilon \xi & = & \lambda \, \xi + \varepsilon^4 \, (\ln \varepsilon)^4 \, L(\xi , \zeta)\\[3mm]
\varepsilon^2 \, \mathfrak J_\gamma \zeta  & = & \lambda \,  \zeta +  \varepsilon^4 \, (\ln \varepsilon)^4 \, L(\xi , \zeta).
\end{array}
\right.
\end{equation}

We are now in a position to prove the:
\begin{lemma}
\label{le:7.7}
For all $\varepsilon$ small enough the linearized operator $\mathfrak L_\varepsilon$ has nullity $0$ and Morse index at most equal to $n+ \text{Ind}\, (\Phi) + \text{Ind}\, (\gamma)$.
\end{lemma}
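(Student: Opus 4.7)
The plan is to show that, modulo perturbations of order $\varepsilon^{4}(\ln\varepsilon)^{4}$, non-positive eigenvalues of $\mathfrak L_\varepsilon$ correspond bijectively to non-positive eigenvalues of the decoupled pair $(\mathcal L_\varepsilon, \varepsilon^{2}\mathfrak J_\gamma)$ acting on $(\xi,\zeta)$, and then to invoke Proposition~\ref{pr:5.1} and Proposition~\ref{pr:5.4} to conclude. I will first establish the upper bound on the Morse index, and then rule out the eigenvalue $0$.

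For the Morse index bound, suppose by contradiction that $\mathfrak L_\varepsilon$ admits $N > n + \text{Ind}(\Phi) + \text{Ind}(\gamma)$ linearly independent eigenfunctions $\phi_1, \ldots, \phi_N$ associated to non-positive eigenvalues. To each $\phi_k$, I associate the pair $(\xi_k, \zeta_k) = (k_1^k - k_2^k, k_1^k + k_2^k)$ coming from the decomposition (\ref{eq:7.1}). By Lemma~\ref{le:7.6}, the map $\phi_k \mapsto (\xi_k,\zeta_k)$ is injective provided $\varepsilon$ is small, since $\eta$ is bounded by $\varepsilon^{4}(\ln\varepsilon)^{4}$ times the norm of $(k_1, k_2)$, so the image $\mathcal V \subset H^{1}(\gamma) \times H^{1}(\gamma)$ is $N$-dimensional. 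Each pair satisfies the coupled system (\ref{eq:7.2}), which has the block-diagonal leading part $(\mathcal L_\varepsilon, \varepsilon^{2}\mathfrak J_\gamma)$ plus an off-diagonal perturbation of operator-norm $O(\varepsilon^{4}(\ln\varepsilon)^{4})$. Since the index of the unperturbed block-diagonal operator on $H^{1}(\gamma)\times H^{1}(\gamma)$ equals $n + \text{Ind}(\Phi) + \text{Ind}(\gamma)$ (by Proposition~\ref{pr:5.1} for the first block and by definition of $\text{Ind}(\gamma)$ for the second, noting that a global positive factor $\varepsilon^{2}$ does not change the index), and since Proposition~\ref{pr:5.4} combined with non-degeneracy of $\gamma$ provides a spectral gap of size at least $c\,\varepsilon^{2}$ around $0$ for each block, a standard min-max perturbation argument shows that the number of non-positive eigenvalues is stable under perturbations of size $\ll \varepsilon^{2}$. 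Hence, for $\varepsilon$ small enough, the restriction of the full system's quadratic form to $\mathcal V$ cannot be non-positive in $N$ directions, a contradiction.

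To show that the nullity is $0$, suppose $\lambda = 0$ with nonzero $\phi$. The system (\ref{eq:7.2}) becomes
\[
\mathcal L_\varepsilon \xi = \varepsilon^{4}(\ln\varepsilon)^{4} L(\xi,\zeta), \qquad \varepsilon^{2}\mathfrak J_\gamma \zeta = \varepsilon^{4}(\ln\varepsilon)^{4} L(\xi,\zeta).
\]
By Proposition~\ref{pr:5.4}, the operator $\mathcal L_\varepsilon$ has no spectrum in $[-c\varepsilon^{2}, c\varepsilon^{2}]$, so it is invertible from its spectral complement with inverse norm bounded by $C\varepsilon^{-2}$. Similarly, the non-degeneracy of $\gamma$ gives an inverse of $\mathfrak J_\gamma$ of bounded norm, so $\varepsilon^{2}\mathfrak J_\gamma$ has inverse norm $O(\varepsilon^{-2})$. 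Applying both inverses yields $\|(\xi,\zeta)\| \le C\,\varepsilon^{2}(\ln\varepsilon)^{4}\|(\xi,\zeta)\|$, which for $\varepsilon$ small forces $(\xi,\zeta) = 0$, and then by Lemma~\ref{le:7.6} also $\eta = 0$, contradicting $\phi \neq 0$. Combined with the lower bound of Lemma~\ref{le:7.3}, this completes the proof.

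The main obstacle is the min-max perturbation step in the second paragraph, which requires carefully verifying that the size of the cross-coupling terms $\varepsilon^{4}(\ln\varepsilon)^{4}$ is genuinely dominated by the spectral gap $c\varepsilon^{2}$ uniformly on the relevant finite-dimensional subspaces; this is the reason we needed the quantitative gap in Proposition~\ref{pr:5.4} rather than just qualitative non-degeneracy.
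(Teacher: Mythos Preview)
Your approach is valid but different from the paper's. The paper simply says the argument of Lemma~\ref{le:5.3} ``can be duplicated'': one takes a sequence $\varepsilon_k\to 0$ and eigenfunctions $\phi_k$ with $\lambda_k\le 0$, passes to the reduced pair $(\xi_k,\zeta_k)$ via (\ref{eq:7.2}), normalizes, and runs the compactness/blow-up dichotomy of Lemma~\ref{le:5.3} on the $\xi$-equation while the $\zeta$-equation limits to an eigenfunction of $\mathfrak J_\gamma$. This is qualitative and avoids any explicit comparison of quadratic forms. Your route is instead a quantitative spectral-gap/perturbation argument on the block operator $(\mathcal L_\varepsilon,\varepsilon^2\mathfrak J_\gamma)$; it is more direct and your nullity argument is particularly clean.

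There is one point that deserves more care. In the index step you assert that ``the restriction of the full system's quadratic form to $\mathcal V$ cannot be non-positive in $N$ directions'', but (\ref{eq:7.2}) by itself does not give a quadratic-form inequality on \emph{linear combinations} of the $(\xi_k,\zeta_k)$, since the $\lambda_k$ differ. What you actually need is that for any $\phi$ in the span of the $\phi_k$ one has, via Lemmas~\ref{le:7.2}--\ref{le:7.6}, the identity $\mathfrak Q_\varepsilon(\phi)=\varepsilon\,c_0\big(\langle\mathcal L_\varepsilon\xi,\xi\rangle+\langle\varepsilon^2\mathfrak J_\gamma\zeta,\zeta\rangle\big)+O(\varepsilon^5(\ln\varepsilon)^4)\|(\xi,\zeta)\|_{H^1}^2$; combined with $\mathfrak Q_\varepsilon(\phi)\le 0$ this transfers non-positivity to $\mathcal V$. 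A second subtlety is that the error operators $L$ in (\ref{eq:7.2}) are second order, so the perturbation is only \emph{relatively} bounded (with relative bound $O(\varepsilon^2(\ln\varepsilon)^4)$ with respect to the block operator), not bounded; your inversion estimate and the min-max stability both go through once this is made explicit, since the absolute part of the relative bound is $O(\varepsilon^4(\ln\varepsilon)^6)\ll c\varepsilon^2$. With these two clarifications your argument is complete.
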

\begin{proof}
The analysis we have done in the proof of Lemma~\ref{le:5.3} can be duplicated and we conclude that, for all $\varepsilon$ small enough, the index of $\mathfrak L_\varepsilon$ is \emph{at most} equal to
\[
n + \text{\rm Ind}\, (\gamma)+\text \, {\rm Ind} \, (\Phi),
\]
and that $\mathfrak L_\varepsilon$ does not have any kernel. We leave the details to the reader.
\end{proof}

To summarize, we have proven that the linearized operator $\mathfrak L_\varepsilon$ does not have any kernel and  we have also proven that its Morse index is at least and at most equal to $n+\text{Ind} (\Phi)+ \text{Ind} (\gamma)$. Therefore, the proof of Proposition~\ref{pr:7.1} is complete.

\medskip

Theorem \ref{th:1.5} then follows directly from Proposition \ref{pr:6.2} and Proposition \ref{pr:7.1}.
\section{Bumpy metrics}
In this section, we prove that both the geodesics we work with and the corresponding Bouncing Jacobi Fields can be assumed to be non-degenerate, for generic metrics in the sense of Baire category. This follows from application of the Sard-Smale's Theorem and more precisely of the Transversality Theorem 2 in \cite{Uhl} and hence, we will adopt in this section most notations of \cite{Uhl}.

\medskip

We fix $k\geq 3$ and let $\mathcal M$ be the set of $\mathcal C^k$ metrics on the surface $M$. Let $\Gamma$ denote the space of $\mathcal C^2$ embedding $\gamma : S^1 \to M$. We define $\mathcal K$ to be the set of couples $(g,[\gamma]) \in \mathcal M \times \Gamma$ such that the image of $\gamma$ is an embedded closed geodesic of $(M,g)$ and where $[\gamma]$ denotes the set of $\gamma\circ \phi$ where $\phi : S^1 \to S^1$ is a diffeomorphism. We also define the projection
\[
\pi : \mathcal K \to \mathcal M,
\]
by $\pi (g,\gamma)=g$. It is proven in \cite{Whi91} that $\mathcal K$ is a separable Banach manifold and $\pi$ is a Fredholm map of index $0$. Moreover, of $g$ is a regular value of $\pi$ then all closed embedded geodesics of $(M,g)$ are non-degenerate.

\medskip

We set
\[
X \subset T^n = S^1 \times \ldots \times S^1,
\]
to be the set of ordered $n$-tuples of distinct points on $S^1$ and we define $F : \mathcal M \times X \to \R^n$ by
\[
F((g, [\gamma]), \Theta ) := \left( \frac{\partial_\theta \Phi(\theta_j^+) + \partial_\theta \Phi (\theta_j^-)}{|\partial_\theta \gamma (\theta_j)|_g}\right)_{j=1, \ldots, n},
\]
where $\Theta :=(\theta_1, \ldots, \theta_n)$, $\Phi$ solves
\[
\frac{1}{|\partial_\theta \gamma|_g} \partial_\theta \left(\frac{1}{|\partial_\theta \gamma|_g} \partial_\theta \Phi\right) +K_g (\gamma) \, \Phi =0 \qquad \text{and} \qquad \Phi >1,
\]
in each $(\theta_j, \theta_{j+1})$ and $\Phi (\theta_j) =1$.

\medskip

Observe that Bouncing Jacobi Fields are in one to one correspondence with  solutions of $F(g,\gamma, \Theta)=0$. We start with the:
\begin{lemma}
\label{le:8.1}
The Bouncing Jacobi Field associated to $(g, [\gamma]) \in \mathcal K \times \Gamma$ is non-degenerate if $g$ is a regular value of $\pi$ and if $0$ is a regular value of $F _{|\pi^{-1}(g)\times X}$.  \end{lemma}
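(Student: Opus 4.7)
The plan is to identify the kernel of $D_\Theta F|_{\pi^{-1}(g)\times X}$ at a zero with the null space of the operator appearing in the non-degeneracy condition of Definition~\ref{de:3.2}, and to conclude by equidimensionality.

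I would first unpack the hypotheses. Since $\pi:\mathcal K \to \mathcal M$ is Fredholm of index $0$ and $g$ is a regular value, $\pi^{-1}(g)$ is a discrete subset of $\mathcal K$, so in a neighborhood of a point $\bigl((g,[\gamma]),\bar\Theta\bigr)$ the restriction $F|_{\pi^{-1}(g)\times X}$ is just the finite-dimensional smooth map $F_{g,[\gamma]}(\Theta) := F((g,[\gamma]),\Theta)$ from the $n$-dimensional open set $X \subset T^n$ into $\R^n$. Since domain and codomain have the same dimension, ``$0$ is a regular value at $\bar\Theta$'' is equivalent to $D_\Theta F_{g,[\gamma]}(\bar\Theta)$ being a linear isomorphism, hence equivalent to injectivity of this differential. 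By the very definition of $F$, zeros of $F_{g,[\gamma]}$ are precisely the Bouncing Jacobi Fields with $n$ minimums along $\gamma$.

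Next I would compute $D_\Theta F(\bar\Theta)$ explicitly at a Bouncing Jacobi Field $\Phi = \Phi_{\bar\Theta}$. Parameterizing $\gamma$ by arc-length, I write $s_j$ for the arc-length of $\theta_j$, denote by $\phi_j$ the restriction of $\Phi$ to $(s_j, s_{j+1})$, and introduce the arc-length variation $\dot s_j := |\partial_\theta \gamma(\theta_j)|_g \, \dot\theta_j$. The infinitesimal variation $\eta := \tfrac{d}{dt}\big|_{t=0}\Phi_{\bar\Theta+t\dot\Theta}$ solves $\mathfrak J_\gamma\eta = 0$ on each open arc, and differentiating the normalization $\Phi_{\bar\Theta+t\dot\Theta}(\theta_j+t\dot\theta_j)=1$ in $t$ yields
\[
\eta(s_j^+) = -\partial_s\phi_j(s_j)\,\dot s_j, \qquad \eta(s_j^-) = -\partial_s\phi_{j-1}(s_j)\,\dot s_j.
\]
The Bouncing Jacobi Field identities $\partial_s\phi_j(s_j) = -\partial_s\phi_{j-1}(s_j) = N_j/2 > 0$ then force $\eta(s_j^+)+\eta(s_j^-)=0$ automatically, together with $\eta(s_j^+)-\eta(s_j^-) = -N_j\,\dot s_j$. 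Differentiating the $j$-th entry of $F$ in $t$ and using $\partial_s^2\phi_j(s_j) = \partial_s^2\phi_{j-1}(s_j) = -K_g(s_j)$ (from the Jacobi equation at the value $1$), I obtain, up to a positive scaling by $|\partial_\theta\gamma(\theta_j)|_g^{-1}$,
\[
[D_\Theta F(\dot\Theta)]_j = \partial_s\eta(s_j^+)+\partial_s\eta(s_j^-) - 2K_g(s_j)\,\dot s_j = \partial_s\eta(s_j^+)+\partial_s\eta(s_j^-) + \frac{2K_g(s_j)}{N_j}\bigl(\eta(s_j^+)-\eta(s_j^-)\bigr),
\]
which is exactly the left-hand side of the third equation in the system of Definition~\ref{de:3.2}.

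Consequently $\dot\Theta \in \ker D_\Theta F(\bar\Theta)$ if and only if the associated $\eta_{\dot\Theta}$ solves the full non-degeneracy system of Definition~\ref{de:3.2}; since $N_j > 0$, the assignment $\dot\Theta \mapsto \eta_{\dot\Theta}$ is injective, so it gives a linear isomorphism between $\ker D_\Theta F(\bar\Theta)$ and the null space of that system. Therefore injectivity of $D_\Theta F$ (i.e. the regularity of $0$) forces the Bouncing Jacobi Field to be non-degenerate, which is the desired statement. The main obstacle, which I expect to be purely bookkeeping, is tracking the conversion between the $\theta$-parametrization (in which $F$ is defined) and the arc-length parametrization (in which Definition~\ref{de:3.2} and Proposition~\ref{pr:3.1} are stated): once the positive scaling $\dot s_j \leftrightarrow \dot\theta_j$ is handled consistently, the linearization above is the same one already carried out in the proof of Proposition~\ref{pr:3.3} and the lemma follows.
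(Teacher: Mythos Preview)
Your proposal is correct and follows the same line as the paper, which in fact gives only a two-sentence assertion; you supply the explicit computation of $D_\Theta F$ and its identification with the system of Definition~\ref{de:3.2} that the paper leaves implicit (and which is essentially the linearization already carried out in Propositions~\ref{pr:2.5} and~\ref{pr:3.3}). One small point of exposition: the sentence ``the assignment $\dot\Theta \mapsto \eta_{\dot\Theta}$ is injective, so it gives a linear isomorphism between $\ker D_\Theta F(\bar\Theta)$ and the null space'' is a slight shortcut---what you are really using is that this assignment is an isomorphism from the $n$-dimensional space of $\dot\Theta$'s onto the $n$-dimensional space of solutions of $\mathfrak J_\gamma\eta=0$ on each arc with $\eta(s_j^+)+\eta(s_j^-)=0$ (the latter being $n$-dimensional since $\mathfrak J_\gamma$ is non-degenerate on each $(s_j,s_{j+1})$ by positivity of $\Phi$), under which the subspaces $\ker D_\Theta F$ and the null space of Definition~\ref{de:3.2} correspond.
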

\begin{proof}
The fact that $0$ is a regular value of $\pi$ implies that all geodesics are non-degenerate and hence isolated. The fact that $0$ is a regular value of the restriction of the function $F$ to $\pi^{-1}(g)\times X$ implies that the corresponding Bouncing Jacobi Fields are non-degenerate.
\end{proof}

We now prove the:
\begin{lemma}
\label{le:8.2}
Assume that $0$ is a regular value of $F : \mathcal K \times X\to \R^n$. Then, the set of $g \in \Gamma$ such that the embedded closed geodesics are non-degenerate and carry a non-degenerate Bouncing Jacobi Field is residual in the space of $\mathcal C^k$ metrics, for $k\geq 3$.
\end{lemma}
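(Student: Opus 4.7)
The plan is to apply the Sard-Smale theorem twice, once to the projection $\pi:\mathcal K\to\mathcal M$ (this is essentially White's bumpy metric theorem) and once to a Fredholm map $\Pi_n:\mathcal K_n\to\mathcal M$ built from the constraint $F=0$, and then to take a countable intersection of residual sets over the number of minimums $n$.

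First, I would recall from \cite{Whi91,Whi17} that $\pi:\mathcal K\to\mathcal M$ is a $\mathcal C^{k-2}$ Fredholm map of index $0$ between separable Banach manifolds. Since $k-2\ge 1$ exceeds the Fredholm index, the Sard-Smale theorem produces a residual set $\mathcal M_0\subset\mathcal M$ of regular values of $\pi$; for $g\in\mathcal M_0$, every closed embedded geodesic of $(M,g)$ is non-degenerate, and in particular $\ker d\pi_{(g,[\gamma])}=0$ for every $(g,[\gamma])\in\pi^{-1}(g)$.

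Second, for each $n\ge 1$, the hypothesis that $0$ is a regular value of $F:\mathcal K\times X\to\R^n$ makes $\mathcal K_n:=F^{-1}(0)$ a $\mathcal C^{k-2}$ Banach submanifold of $\mathcal K\times X$ of codimension $n$. The projection $\Pi_n:\mathcal K_n\to\mathcal M$, $(g,[\gamma],\Theta)\mapsto g$, is then Fredholm of index $0$: writing $p_1:\mathcal K\times X\to\mathcal K$ for the first-factor projection, the composition $\pi\circ p_1$ is Fredholm of index $n$ (a submersion with $n$-dimensional fiber composed with a Fredholm index $0$ map), and restricting to the transverse codimension-$n$ submanifold $F^{-1}(0)$ drops the index by $n$. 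Sard-Smale therefore produces a residual set $\mathcal M_n\subset\mathcal M$ of regular values of $\Pi_n$. The key step is the following interpretation of regularity: for $g\in\mathcal M_0\cap\mathcal M_n$ and any $q=(g,[\gamma],\Theta)\in\Pi_n^{-1}(g)$, the differential $d\Pi_n:T_q\mathcal K_n\to T_g\mathcal M$ is an isomorphism (Fredholm index $0$ plus surjectivity), so any $\dot\Theta\in T_\Theta X$ with $\partial_\Theta F(q)\dot\Theta=0$ produces $(0,0,\dot\Theta)\in\ker dF_q=T_q\mathcal K_n$ lying over $0\in T_g\mathcal M$, whence $\dot\Theta=0$ by injectivity; thus $\partial_\Theta F(q):\R^n\to\R^n$ is invertible. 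Combined with $\ker d\pi_{(g,[\gamma])}=0$, this is exactly the assertion that $0$ is a regular value of $F|_{\pi^{-1}(g)\times X}$, and Lemma~\ref{le:8.1} then yields non-degeneracy of the associated Bouncing Jacobi Field.

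Finally, the desired set is $\mathcal M_\star:=\mathcal M_0\cap\bigcap_{n\ge 1}\mathcal M_n$, a countable intersection of residual subsets of $\mathcal M$, and hence residual by Baire category; every $g\in\mathcal M_\star$ has the property claimed in the lemma. The main technical obstacle will be the Fredholm index count for $\Pi_n$: one must identify $F^{-1}(0)$ as a codimension-$n$ $\mathcal C^{k-2}$ Banach submanifold and invoke the composition and restriction rules for Fredholm maps carefully, leaning on the Banach manifold structure on $\mathcal K$ established in \cite{Whi91}. Once that is in place, the remainder of the argument is a direct application of Sard-Smale together with Baire category.
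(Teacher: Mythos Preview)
Your proposal is correct and follows essentially the same route as the paper: both invoke the Sard--Smale transversality machinery, the paper by citing Uhlenbeck's packaged Transversality Theorem in \cite{Uhl} and you by unpacking that argument explicitly (forming $\mathcal K_n=F^{-1}(0)$, checking $\Pi_n$ is Fredholm of index $0$, and reading off non-degeneracy from regularity of $g$). Your additional intersection over $n\ge 1$ goes slightly beyond the lemma as stated (which is for a fixed $n$) toward Theorem~\ref{th:1.7}, but this is harmless.
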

\begin{proof}
This result is a consequence of the Transversality Theorem in \cite{Uhl}.
\end{proof}

In view of the previous Lemma, there remains to prove that $0$ is a regular value of $F : \mathcal K \times X \to \R^n$. To this aim, we start with the expression of the first variation of $F$ with respect to $g$. In Fermi coordinates, the metric $g$ can we written as
\[
g = (1 + K_g(s) \,  t^2 + \mathcal O (t^3)) \, ds^2 + dt^2,
\]
and we consider an infinite dimensional family of metrics defined in a tubular neighborhood of the geodesic by
\[
g_z =  g + z(s) \, t^2 \, ds^2 .
\]
where $s \mapsto z(s)$ is a small smooth function.

\medskip

Observe that with this choice the curve $\gamma$ is a geodesic of $M$ endowed with $g_z$ since all partial first order derivatives of the coefficients  of the metric $g_z$ vanish at $t=0$ and hence the Christoffel symbols also vanish at $t=0$. This immediately implies that the curve $\gamma$, which is a geodesic when the manifold is endowed with the metric $g$, remains a geodesic when we change the metric into $g_z$. Let us write
\[
g_z=E_z \, ds^2 + G_z \, dt^2.
\] According to Brioschi's formula, the Gauss curvature $K_z$ of the perturbed metric $g_z$ is given by
\[
K_z =-\frac{1}{2\sqrt{E_zG_z}}\left(\partial_s\left(\frac{\partial_s G_z}{\sqrt{E_zG_z}}\right)+\partial_t\left(\frac{\partial_tE_z}{\sqrt{E_zG_z}}\right)\right),
\]
along $\gamma$. In our case $G_z \equiv 1$ and hence, this formula simplifies into
\begin{equation}
\label{eq:8.1}
    K_z = -\frac{1}{\sqrt{E_z}}\partial^2_t(\sqrt{E_z}).
\end{equation}
\medskip
Starting from a Bouncing Jacobi Field $\Phi$, we define $\Phi_z$ to be the solution of
\[
(\partial_s^2 + K_z)\, \Phi_z = 0,
\]
on each $(s_j, s_{j+1})$ with $\Phi_z(s_j) =1$, for $i=1, \ldots, n$.

\medskip

Taking the differential with respect to the function $z$ at $z=0$, we get that
\[
\dot \Phi := D_z \Phi_{z|z=0}(\dot z),
\]
is a solution of
\[
(\partial_s^2 + K_g)\, \dot \Phi + D_z K_{z|z=0}(\dot z) \, \Phi = 0,
\]
on each $(s_j, s_{j+1})$ with $\dot \Phi (s_j) =0$, for $i=1, \ldots, n$.

\medskip

Multiplying by $\Phi$ and integrating the result over $(s_j, s_{j+1})$ we conclude that
\[
\partial_s \dot \Phi (s_{j+1}^-) - \partial_s \dot \Phi (s_j^+) =- \int_{s_j}^{s_{j+1}} D_z K_{z|z=0}(\dot z) \, \Phi^2\, ds,
\]
for all $j=1, \ldots, n$.
Using (\ref{eq:8.1}), we get with little work
\[
D_z K_{z|z=0}(\dot z) = - \dot z,
\]
along $\gamma$.
 To conclude positively, given $\xi := (\xi_1, \ldots, \xi_n) \in \R^n$, we need to find $s \mapsto \dot z(s)$ such that
\[
\sum_{j=1}^n \left(  \partial_s \dot \Phi (s_j^+) + \partial_s \dot \Phi (s_j^-) \right)\, \xi_j \neq 0.
\]
Now, it is  a simple exercise to check that it is  possible to choose $\dot z$ such that
\[
\partial_s \dot \Phi (s_j^+) = 0 \qquad \text{and} \qquad  \partial_s \dot \Phi (s_j^-) =\xi_j,
\]
for all $j=1, \ldots, n$. This completes the proof of the fact that $0$ is a regular value of $F$ and hence completes the proof of Theorem \ref{th:1.7}:

\begin{theorem}
\label{th:8.3}
The set of metrics $g$ on $M$ such that closed embedded geodesics are non-degenerate and its associated Bouncing Jacobi Fields are non-degenerate is residual and generic in the space of $\mathcal C^k$ metrics, for $k\geq 3$.
\end{theorem}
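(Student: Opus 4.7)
The plan is to apply Uhlenbeck's transversality theorem from \cite{Uhl} to the map $F\colon \mathcal K\times X \to \R^n$, whose zeros parametrize Bouncing Jacobi Fields along closed embedded geodesics. The ambient framework comes essentially for free from White's bumpy metrics theorem \cite{Whi91}: $\mathcal K$ carries a separable Banach manifold structure, and the projection $\pi\colon\mathcal K\to\mathcal M$ is Fredholm of index zero, so regular values of $\pi$ are precisely metrics whose closed embedded geodesics are all non-degenerate. By Lemma~\ref{le:8.1}, non-degeneracy of a BJF at $(g,[\gamma],\Theta)$ amounts to $0$ being a regular value of $F$ restricted to $\pi^{-1}(g)\times X$. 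Once global regularity of $0$ for the map $F$ is established, Lemma~\ref{le:8.2} yields the residual set of metrics satisfying both non-degeneracy conditions simultaneously. The entire theorem therefore reduces to verifying that $0$ is a regular value of $F$ globally.

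The main obstacle is showing surjectivity of $DF$ at every zero $(g,[\gamma],\Theta)$. My strategy is to fix the geodesic and perturb only along metric directions that preserve it. In Fermi coordinates $(s,t)$ along $\gamma$, I introduce the family $g_z := g + z(s)\, t^2\, ds^2$ parametrized by smooth functions $z\colon\gamma\to\R$. The key observation is that all first partial derivatives of the coefficients of $g_z$ vanish at $t=0$, so the Christoffel symbols vanish along $\gamma$ and $\gamma$ remains a closed geodesic of $g_z$. Brioschi's formula then gives the clean infinitesimal identity $D_z K_{z\mid z=0}(\dot z) = -\dot z$ along $\gamma$, so we can vary the Gauss curvature along the geodesic freely without changing the geodesic itself.

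The final step is to translate this freedom into surjectivity of $DF$ onto $\R^n$. Differentiating the Jacobi equation with the boundary data $\Phi(s_j)=1$, the variation $\dot\Phi := D_z\Phi_{z\mid z=0}(\dot z)$ satisfies $\mathfrak J_\gamma\dot\Phi = -\dot z\,\Phi$ on each $(s_j,s_{j+1})$ with $\dot\Phi(s_j)=0$. A Lagrange-identity integration of this equation against $\Phi$ over the subinterval (using $\mathfrak J_\gamma\Phi=0$ and $\Phi(s_j)=\Phi(s_{j+1})=1$) produces $\partial_s\dot\Phi(s_{j+1}^-) - \partial_s\dot\Phi(s_j^+) = \int_{s_j}^{s_{j+1}}\dot z\,\Phi^2\,ds$, so the $n$ components $\partial_s\dot\Phi(s_j^+) + \partial_s\dot\Phi(s_j^-)$ of $DF(\dot z)$ become specific linear functionals of $\dot z$. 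Localizing $\dot z$ in disjoint small windows around each $s_j$ decouples these $n$ contributions, and a variation-of-parameters argument inside each subinterval (where $\Phi>0$) lets one realize any prescribed value of each one-sided derivative independently; in particular any target $\xi\in\R^n$ is attained. This confirms the surjectivity of $DF$ and completes the plan.
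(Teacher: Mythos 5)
Your proposal follows the paper's own argument essentially verbatim: White's result for the Banach-manifold structure of $\mathcal K$ and Fredholmness of $\pi$, Uhlenbeck's transversality theorem via Lemmas~\ref{le:8.1} and~\ref{le:8.2}, the Fermi-coordinate family $g_z=g+z(s)\,t^2\,ds^2$ fixing the geodesic, Brioschi's formula giving $D_zK_{z|z=0}(\dot z)=-\dot z$, and the Lagrange identity giving $\partial_s\dot\Phi(s_{j+1}^-)-\partial_s\dot\Phi(s_j^+)=\int\dot z\,\Phi^2$. The only small imprecision is your phrase that localizing $\dot z$ near each $s_j$ ``decouples'' the $n$ contributions: a $\dot z$ supported in $(s_j,s_j+\delta)$ still feeds both $\partial_s\dot\Phi(s_j^+)$ and $\partial_s\dot\Phi(s_{j+1}^-)$ through the Dirichlet problem on $(s_j,s_{j+1})$, so the map is band-structured rather than diagonal; the paper instead directly exhibits $\dot z$ realizing $\partial_s\dot\Phi(s_j^+)=0$, $\partial_s\dot\Phi(s_j^-)=\xi_j$. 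Either way surjectivity of $DF$ follows, so this is a presentational nuance, not a gap.
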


\end{document}